\documentclass[pre,showpacs,amsmath,amssymb,floatfix,onecolumn]{revtex4-1}
\usepackage{graphicx}
\usepackage{epsfig}
\usepackage{dcolumn}
\usepackage{bm}
\usepackage{color}
\usepackage{amsthm}
\usepackage{algorithmic}
\usepackage{algorithm}

\def\(({\left(}
\def\)){\right)}
\def\[[{\left[}
\def\]]{\right]}

\newtheorem*{theorem}{Theorem}
\newcommand{\bF}{{\textbf {F}}}

\newcommand{\bX}{{\textbf {X}}}

\newcommand{\by}{{\textbf {y}}}
\newcommand{\bw}{{\textbf {w}}}
\newcommand{\be}{\begin{equation}}
\newcommand{\ee}{\end{equation}}
\newcommand{\bea}{\begin{eqnarray}}
\newcommand{\eea}{\end{eqnarray}}

\begin{document}

\title{Phase transitions and sample complexity in Bayes-optimal matrix
  factorization} \author{Yoshiyuki Kabashima$^{1}$, Florent
  Krzakala$^{2}$, Marc M\'ezard$^{3}$, Ayaka Sakata$^{1,4}$, and Lenka
  Zdeborov\'a$^{5,*}$}

\affiliation{
$^1$ {Department of Computational Intelligence \& Systems Science,
  Tokyo Institute of Technology, Yokohama 226-8502, Japan.}\\
$^2$ Laboratoire de Physique Statistique,  CNRS UMR 8550 Ecole Normale
Sup\'erieure, PSL Research University \& Universit\'e Pierre et Marie Curie, Sorbonne Universit\'es.\\ 
$^3$ D\'epartement de Physique, Ecole Normale Sup\'erieure, PSL Research University, Paris. \\
$^4$ The Institute of Statistical Mathematics, Tachikawa 190-8562, Japan.\\
$^5$ Institut de Physique Th\'eorique, CEA Saclay and CNRS, Gif-sur-Yvette, France.\\
$^*$ To whom correspondence shall be sent: lenka.zdeborova@cea.fr\\
}

\begin{abstract}
  We analyse the matrix factorization problem. Given a noisy
  measurement of a product of two matrices, the problem is to estimate
  back the original matrices. 
 It arises in many applications such as
  dictionary learning, blind matrix calibration, sparse principal
  component analysis, blind source separation, low rank matrix
  completion, robust principal component analysis or factor
  analysis. It is also important in machine learning: unsupervised
  representation learning can often be studied through matrix
  factorization.  
We use the tools of statistical mechanics -- the cavity
  and replica methods -- to analyze the achievability and computational tractability
  of the inference problems in the setting of Bayes-optimal inference,
  which amounts to assuming that the two matrices have random
  independent elements generated from some known distribution, and
  this information is available to the inference algorithm. In this
  setting, we compute the minimal mean-squared-error achievable in
  principle in any computational time, and the error that can be
  achieved by an efficient approximate message passing algorithm. The
  computation is based on the asymptotic state-evolution analysis of
  the algorithm. The performance that our analysis predicts, both in
  terms of the achieved mean-squared-error, and in terms of sample
  complexity, is extremely promising and motivating for a further
  development of the algorithm \footnote{Part of the results
    discussed in this paper were presented at the 2013 IEEE
    International Symposium on Information Theory in Istanbul.}.
\end{abstract}

\date{\today}
\maketitle

\tableofcontents

\newpage
\section{Introduction}

We study in this paper a variety of questions which all deal with the
general problem of matrix factorization. Generically, this problem is
stated as follows: Given a  $M \times P$ dimensional matrix $Y$, that
was obtained from noisy element-wise measurements of a matrix $Z$, one seeks a factorization $Z=FX$, where the $M\times N$ dimensional  matrix $F$ and the
$N\times P$ dimensional  matrix $X$ must satisfy some specific
requirements like sparsity, low-rank or non-negativity. 

From a machine learning point of view, matrix factorization can be
applied to unsupervised learning of data representation
\cite{bengio2013representation}. The success of machine learning,
including recent progress such as deep learning \cite{lecun2015deep},
depends largely on data representations. Explicit approaches to
efficient data
representation, such as matrix factorization, are hence of wide relevance. 
Other applications that can be formulated as matrix factorization include dictionary learning or sparse coding
\cite{olshausen1996emergence,OlshausenField97,kreutz2003dictionary},
sparse principal component analysis \cite{zou2006sparse},
blind source separation \cite{belouchrani1997blind}, low rank matrix
completion \cite{candes2009exact,candes2010power} or robust
principal component analysis \cite{candes2011robust}, that will be described below. 

Theoretical limits on when matrix
factorization is possible and computationally tractable are still rather poorly
understood. In this work we make a step towards this understanding by
predicting the limits of matrix factorization and its algorithmic tractability when $Z$ is created
using randomly generated matrices $F$ and $X$, and measured
element-wise via a known noisy output channel $P_{\rm out}(Y|Z)$. Our results are
derived in the limit where $N, M, P \to \infty$ with fixed ratios
$M/N = \alpha$, $P/N = \pi$. We predict the existence of sharp phase
transitions in this limit and provide the explicit formalism to locate
them.

We use two types of methods in this paper. The first one is based on a generalization of
approximate message passing (AMP) \cite{DonohoMaleki09} to the matrix
factorization problem, and on its asymptotic analysis which is known
in statistical physics as the cavity method
\cite{MezardParisi87b,MezardMontanari09}, and has been called state
evolution in the context of compressed sensing
\cite{DonohoMaleki09}. The second method that we use in the following
is the replica method. These two methods are widely believed to be exact
in the context of theoretical statistical physics, but most of the
results that we shall obtain in the present work are not rigorously
established. Our predictions have thus the status of conjectures. A
first cross-check of the correctness of these conjectures is the fact that the two methods give
identical results. This has been understood first in the
context of spin glasses \cite{MezardParisi87b}.

This work builds upon some previous steps that we
described in earlier reports
\cite{SakataKabashimaNew,krzakala2013phase}. The message passing
algorithm related to our analysis was first presented in
\cite{krzakala2013phase} and is very closely related to the Big-AMP
algorithm developed and tested in
\cite{SCHNITER-BIG,parker2013bilinear,parker2014bilinear}; relations and differences with
Big-AMP will be mentioned in several places throughout the paper. Our
main focus here, beside the detailed derivation of the algorithm, is
the asymptotic analysis and phase diagrams which were not studied in
\cite{SCHNITER-BIG,parker2013bilinear,parker2014bilinear}. We also
discuss several variants of the AMP algorithm that are interesting for
theoretical reasons. Several of these
variants, however, have convergence problems when implemented
straightforwardly. For a robust implementation of the algorithm that
can be used on practical benchmarks we refer to the works \cite{parker2013bilinear,parker2014bilinear}.

Our general method provides a unifying framework for the study of
computational tractability and identifiability of various matrix factorization
problems. The first step for this synergy is the formulation of the problem via a
graphical model (see Fig. \ref{FactorGraph}) that is amenable to analysis using
the present methods.  

The phase diagrams that we shall derive establishes for each problem two types of thresholds
in the plane $\alpha-\pi$ : the threshold where the problem of matrix factorization
ceases to be solvable in principle, and the threshold where AMP ceases to
find the best solution. In most existing works the computation of
phase transitions was treated separately for each of the various
problems. For instance, redundant dictionaries for sparse
representations and low rankness are usually thought as two different
kinds of dimensional reduction. Interestingly, in our work a wide
class of problems is treated within one unified formalism;
this is theoretically interesting in the context of recent
developments \cite{elad2012sparse}.

\subsection{Statement of the problem}

In a general matrix factorization problem 
one measures some information about matrix elements of the product of
two unknown matrices $F \in {\mathbb R}^{M \times N}$ and $X \in
{\mathbb R}^{N \times P}$, whose matrix elements will be denoted
$F_{\mu i}$ and $X_{il}$. Let us denote the product $Z = F X \in
{\mathbb R}^{M \times P}$, with elements 
\be
z_{\mu l} =  \sum_{i=1}^N  F_{\mu i} X_{il}  \label{z=Fx}  \, .  
\ee
The element-wise
measurement
$y_{\mu l}$ of $z_{\mu l}$ is then specified by some known probability
distribution function $P_{\rm out}(y_{\mu l}|z_{\mu l})$, so that:
\bea
P_{\rm out}(Y|Z)= \prod_{\mu,l} P^{\mu l}_{\rm out}(y_{\mu l}|z_{\mu l}) \, .
\label{PY}
\eea
  The goal of
matrix factorization is to estimate both matrices $F$ and $X$ from the
measurements $Y$.

In this paper we will treat this problem in the framework of Bayesian
inference. In particular we will assume that the matrices $F$ and $X$
were both generated from a known separable probability distribution 
\bea
       P_F(F) &=& \prod_{\mu=1}^M \prod_{i=1}^N   P^{\mu i}_F(F_{\mu i})\, ,  \label{PF}\\
       P_X(X) &=&\prod_{i=1}^N  \prod_{l=1}^P   P^{il}_X(X_{i l})\, . \label{PX} 
\eea

Although we restrict to separable prior probability distributions  $P_F(F)$, $P_X(X)$, it turns
out that these priors can encode a broad range
constraints such as, for instance sparsity. This is the reason why so
many different problems can be studied within our scheme. The output channel $P_{\rm
  out}(y,z)$ can be of a rather generic nature, thus including various kinds
of additive and multiplicative noise; in
the spirit of activation functions from neural networks it can degrade
the information included in the measurement by e.g. keeping only the
sign of elements of $z$.  

In the following we shall mostly study the case where the
distributions $P^{\mu i}_F$ are all identical (there is a single
distribution  $P_F(F_{\mu i})$), and the distributions $P^{il}_X$ for
various $il$ (as well as $P_{\rm out}^{\mu l}$ for various $\mu l$) are
also all identical.
Our approach can be generalized to the case where
$P^{\mu i}_F$, $P^{il}_X$, $P_{\rm out}^{\mu l}$ depend in a
known way on the indices $\mu i$, $il$, and $\mu l$, provided that this
dependence is through parameters that themselves are taken from
separable probability distributions. Examples of such dependence
include the blind matrix calibration or the factor analysis. On the
other hand our theory does not cover the case of an arbitrary matrix
$\tilde F_{\mu i}$ for which we would have $P_F^{\mu i}=
\delta(F_{\mu i} - \tilde F_{\mu i})$.

The posterior distribution of $F$ and $X$ given the measurements $Y$ is written as 
\be
    P(F,X| Y ) = \frac{1}{{\cal Z}(Y)} P_F(F) P_X(X) P_{\rm out}(Y|FX)= \frac{1}{{\cal Z}(Y)}   \prod_{\mu,i}  P_F(F_{\mu i})
    \prod_{i,l}   P_X(X_{i l}) \prod_{\mu,l}  P_{\rm
  out}\left(y_{\mu l}| \sum_i F_{\mu i} X_{i l}\right)\, , \label{eq:post}
\ee
where ${\cal Z}(Y)$ is the normalization constant, known as the partition
function in statistical physics. 

Notice that, while the original problem of finding $F$ and $X$, given
the measurements $Y$, is not well determined (because of the
possibility to obtain, from a given solution, an infinity of other
solutions through the transformation $F\to F U^{-1}$ and $X\to U X$, where
$ U$ is any $N \times N$ nonsingular matrix), the fact of using well
defined priors $P^{\mu i}_F$ and $P^{il}_X$ actually lifts the degeneracy: the
problem of finding the most probable $F,X$ given the measurements and
the priors is well defined. In case the priors $P^{\mu i}_F$ and $P^{i
l}_X$ do not depend on the indices $\mu l$ and $i l$ we are left with
a permutational symmetry between the $N$ column of $F$ and $N$ rows
of $X$. Both in the algorithm and the asymptotic analysis this
symmetry is broken and one of the $N\!$ solutions is chosen at random.

Typically, in most applications, the distributions $P_{\rm
  out}$, $P_F$ and $P_X$ will depend on a set of parameters (such as
the mean, variance, sparsity, noise strength, etc.) that we
usually will not write explicitly in the general case, in order to simplify the
notations. The prior knowledge of these parameters is not necessarily required in our
approach: these parameters can be learned via an expectation-maximization-like
algorithm that we will discuss briefly in section \ref{sec:EM}.


Note also that Eq.~\ref{z=Fx} can be multiplied by an arbitrary constant: with
a corresponding change in the output function the problem will not be
modified. In the derivations of this paper we choose the above constant in such a way that the elements of matrices
$X$, $Y$, and $Z$ are of order $O(1)$, whereas the elements of $F$
scale in a consistent way, meaning that the mean of
each $F_{\mu i}$ is of order $O(1/N)$ and its variance is also of
order $O(1/N)$. 

\subsection{Bayes-optimal inference}
\label{Sec:OB}

Our paper deals with the general case of incomplete information. This
happens when the reconstruction assumes that the matrices $X$, $F$
and $Y$ were generated with some distributions $P_X$, $P_F$ and
$P_{\rm out}(Y|Z)$, whereas in reality the matrices were generated
using some other distributions $P_{X^0}$, $P_{F^0}$ and
$P^0_{\rm out}(Y|Z)$. The message passing algorithm and its asymptotic
evolution will be derived in this general case. 

However, our most important results concern the Bayes-optimal setting,
i.e. when we assume that 
\be
   P_{X^0}=P_X, \quad \quad  P_{F^0}=P_F, \quad \quad  P^0_{\rm out}(Y|Z)=P_{\rm out}(Y|Z)\, .
\ee
In this case, an estimator $X^{\star}$ that minimizes the mean-squared
error (MSE) with respect to the original signal $X^0$, defined as
\be
{\rm MSE}(X|Y)=\int \text{d} F^0 \, \text{d} X^0 \left[ \frac{1}{PN}
  \sum_{il}(X_{il}-X^0_{il})^2\right] P(F^0,X^0|Y)\, ,
\label{MSE_X_def}
\ee 
is obtained from marginals of $X_{il}$ with
respect to the posterior probability measure $P(F,X | Y)$, i.e.,
\be
X^{\star}_{il}=\int \text{d}  X_{il} \,  X_{il}\,  \nu_{il}(X_{il}) \,
, \quad \quad {\rm where} \quad \quad \nu_{il}(X_{il}) \equiv
\int_{\{F_{\mu j}\}} \int_{\{X_{jn}\}_{jn\neq il}} P( F,X| Y) \, , \label{average_marginal}
\ee
is the marginal probability distribution of the variable $il$. The
mean squared error achieved by this optimal estimator is called the
minimum mean squared error (MMSE) in this paper. 

A
similar result holds for the 
estimator of $F^0$ that minimizes the mean-squared error
\be
{\rm MSE}(F|Y)=\int \text{d} F^0 \, \text{d} X^0 \left[ \frac{1}{M}
  \sum_{\mu i}(F_{\mu i}-F^0_{\mu i})^2\right] P(F^0,X^0|Y)\, ,
\label{MSE_F_def}
\ee 
which is obtained from the mean of $F_{\mu i}$ with
respect to the posterior probability measure $P(F,X | Y)$.  
In the remainder of this article we will be using these estimators.

\subsection{Statement of the main result}
\label{sec:main_results}

The main result of this paper are explicit formulas for the MMSE
achievable in the Bayes optimal setting (as defined above) for the
matrix factorization problem in the ``thermodynamic limit'', i.e. when $N, M, P \to
 \infty$ with fixed ratios
$M/N = \alpha$, $P/N = \pi$. When sparsity is involved we consider
that a finite {\it fraction} of matrix elements are non-zero. Similarly, when we treat matrices
with low ranks we consider again the ranks to be a finite fraction of
the total dimension. We also derive the AMP-MSE, i.e. the mean square
error achievable by the
approximate message passing algorithm as derived in this paper. 

So far we were characterizing the output channel by the conditional
distribution $P^0_{\rm out}(y|z^0)$ or $P_{\rm out}(y|z)$. It
will be useful to think of the output as a deterministic function
$h$ of $z$ and of random variables $w$, i.e. $y_{\mu l} = h(z_{\mu
  l},w_{\mu l})
=h^0(z^0_{\mu l},w_{\mu l}^0)$. The random (``noise'') variables $w$ and $w^0$ are specified
by their probability distributions $P(w)$ and $P_0(w^0)$. 
We can relate $P_{\rm out}$ to $h$ as follows
\bea 
P_{\rm out}(y|z) &=& \int P(w) \,  {\rm d}
w \, \delta[y-h(z,w)]\, , \label{eq:hPout} \\
P^0_{\rm out}(y|z^0) &=& \int P_0(w^0) \,  {\rm d}
w^0 \, \delta[y-h^0(z^0,w^0)]\, , 
\eea


To compute the MMSE and AMP-MSE we need to analyze the fixed points of
the following iterative equation. 
\bea
           m_X^{t+1} &=& \int {\rm d}X P_{X}(X)  \int
         {\cal D}\xi \, \,  f_X^2\left[  \frac{1}{
             \alpha m_F^t \hat m^t}  , \frac{\alpha m_F^t
             \hat m^t\,  X + \xi \sqrt{\alpha m_F^t \hat m^t}}{
             \alpha m_F^t \hat m^t}   \right] \, ,\label{eq:mx_NL_i} \\
           m_F^{t+1} &=& \int {\rm d}F P_{F}(F)  \int
         {\cal D}\xi \,  \,  f_F^2\left[  \frac{1}{
             \pi m_X^t \hat m^t}  , \frac{\pi m_X^t
             \hat m^t\,  \sqrt{N} F + \xi \sqrt{\pi  m_X^t \hat m^t}}{
             \pi m_X^t \hat m^t}   \right] \, ,\label{eq:mF_NL_i} \\ 
             \hat m^t &=& - \int {\rm d}w \, P(w)  \int
         {\rm d} p\,  {\rm
               d}z   \frac{  e^{-\frac{p^2}{2m_F^t m_X^t}}  e^{-\frac{(z-p)^2}{2[ \langle(z^0)^2 
             \rangle - m_F^t m_X^t]}} }{2\pi \sqrt{ m_F^t m_X^t (   \langle(z^0)^2 
             \rangle - m_F^t m_X^t  ) }} \,    \partial_p g_{\rm out}(p,h(z,w),
             \langle(z^0)^2 
             \rangle - m_F^t m_X^t ) \, , \label{eq:mhat_NL_i} 
\eea
where $ {\cal D}\xi $ is a notation for a Gaussian probability measure ${\rm d} \xi
e^{-\xi^2/2}/\sqrt{2\pi}$. We denoted $\langle(z^0)^2 \rangle =
N  \mathbb{E}  [ (F^0)^2 ]  \mathbb{E} [(X^0)^2 ]$. Here $f_X$ and
$f_F$ are the so-called input functions, they are defined using the prior
distributions $P_X$ and $P_F$ as
\bea
         f_X(\Sigma,T) &\equiv&  \frac {\int  {\rm d}X \, X P_X(X)
           e^{-\frac{(X-T)^2}{2\Sigma}}  }{   \int  {\rm d}X \,
           P_X(X)  e^{-\frac{(X-T)^2}{2\Sigma}}  }  \\
 f_F(Z,W) &\equiv& \sqrt{N}  \frac {\int  {\rm d}F \, F P_F(F)
           e^{-\frac{(\sqrt{N}F-W)^2}{2Z}}  }{   \int  {\rm d}F \,
           P_F(F)  e^{-\frac{(\sqrt{N} F-W)^2}{2Z}}  }  \, .
\eea
The output function $g_{\rm out}$ is defined using the
output probability $P_{\rm out}$ as
\be
       g_{\rm out} (\omega, y , V) \equiv \frac{ \int {\rm d}z P_{\rm
        out}(y|z)\,  (z-\omega) \,   e^{-\frac{(z-\omega)^2}{2V}}  }{  V \int {\rm d}z P_{\rm
        out}(y|z) e^{-\frac{(z-\omega)^2}{2V}}  } \, . \label{eq:def_gout_i}
\ee

The MSE is computed as ${\rm MSE}_F= \mathbb{E} [(F^0)^2 ] -
m_F$ and ${\rm MSE}_X= \mathbb{E} [(X^0)^2 ] -
m_X$ with $m_F$ and $m_X$ being fixed points of
(\ref{eq:mx_NL_i}-\ref{eq:mhat_NL_i}). 

The AMP-MSE is obtained from a fixed point reached with a so-called
{\it uninformative} initialization. The uninformative initialization
does not use any information about the 
seeked matrices $F^0,X^0$, it uses only the prior
distributions, it is defined as
\be
     m_F^{t=0} = N \mathbb{E}[ F
]^2\, ,  \quad \quad   m_X^{t=0} = \mathbb{E}[ X]^2\, .  \label{eq:DE_NL_init_i}
\ee
In case the prior distribution depends on another random variables,
e.g. in case of matrix calibration, we take additional average with
respect to that variable. 
If the above initialization gives $m_F^{t=0}=0$ and $m_X^{t=0}=0$ then
this is a fixed point of the above iterative equations. This is due to the
permutational symmetry between the columns of matrix $F$ and rows of matrix $X$. To obtain a nontrivial
fixed point we initialize at $m_F^{t=0}=\eta$ for some very small
$\eta$, corresponding to an infinitesimal prior information about the
matrix elements of the matrix $F$. 

To compute the MMSE we need to initialize the iterations in an {\it
  informative } way, using the knowledge of the true $X^0$ and
$F^0$. This informative initialization is defined as an infinitesimal
perturbation of 
\be
     m^{t=0}_F=N  \mathbb{E}[ (F^0)^2
]\, ,  \quad \quad  m^{t=0}_X= \mathbb{E}[ (X^0)^2]\, . \label{eq:init_inf}
\ee
If the resulting fixed point agrees with the AMP-MSE then this is also
the MMSE. In case that this informative initialization leads to a
different fixed point than the AMP-MSE then the MMSE is given by the
one of them for which the following free entropy  is larger
\bea
&& \phi(m_F,m_X,\hat m_F = \pi m_X \hat m,\hat m_X = \alpha m_F \hat m )=\\
&& \alpha \pi \int {\rm d}y\,  {\cal D} \xi \, 
{\cal D}u^0 P_{\rm out} \left (y| \sqrt{Q_F^0 Q_X^0-m_F m_X} u^0
  +{\sqrt{m_F m_X}} \xi \right) 
\log \left(\int {\cal D} u \, P_{\rm out} \left (y| \sqrt{Q_F^0 Q_X^0-m_F m_X} u +\sqrt{m_F m_X} \xi \right ) \right ) \nonumber \\
&&+ \alpha \left(- \frac{\hat{m}_F m_F}{2}+
\int {\cal D}\xi\,  {\rm d}F^0e^{-\frac{N\hat{m}_F}{2}(F^0)^2+\sqrt{N\hat{m}_F}\xi F^0} P_{F}(F^0)
\log\left (\int {\rm d}F e^{-\frac{N\hat{m}_F}{2}F^2+\sqrt{N\hat{m}_F}
    \xi F} P_F(F)\right )  \right) \nonumber \\
&& + \pi \left (- \frac{\hat{m}_X m_X}{2}+
\int {\cal D}\xi \, {\rm d}X^0
e^{-\frac{\hat{m}_X}{2}(X^0)^2+\sqrt{\hat{m}_X}\xi X^0}P_{X}(X^0)
\log\left (\int {\rm d}X e^{-\frac{\hat{m}_X}{2}X^2+\sqrt{\hat{m}_X} \xi X} P_X(X) \right )  \right ), \nonumber 
\label{NishimoriRSfreeenergy_i}
\eea

Sections \ref{sec:AMP}, \ref{Sec:Bethe}, \ref{Sec:AA} are devoted to the derivation of these formulas for MMSE
and AMP-MSE. In section \ref{sec:phase} we then give a number of examples of
phase diagram for specific applications of the matrix factorization problem.  

We reiterate at this point that whereas our results are exact results
within  the
theoretical physics scope of the cavity/replica method, we do not
provide their proofs and their mathematical status is that of
conjectures. We devote
section \ref{sec:nonrigorous} to explaining the reasons of why this is so.  The situation is comparable to the predictions-conjectures that were
made for the satisfiability threshold \cite{MezardParisiZecchina02}, which have been partly
established rigorously recently \cite{DingSlySung14}, or the predictions-conjectures that were
made for the CDMA problem \cite{Tanaka02,GuoVerdu05}, and were later proved by \cite{montanari2006analysis,BayatiMontanari10}.   

\subsection{Applications of matrix factorization} 
\label{sec:examples}

Several important problems which have received a lot of attention
recently are special cases of the matrix factorization problem as set above. In this paper we will
analyse the following ones. 

\paragraph{Dictionary learning.} This is a basic example of finding a
representation of data that is advantageous in some
way. Representation learning \cite{bengio2013representation} is a concept behind many machine learning
applications, including the deep learning framework which has become
very popular in recent time
\cite{lecun2015deep}. In the context of representation learning
dictionary learning is sometimes referred to as {\it sparse coding} \cite{bengio2013representation}. 

Dictionary learning relies on the fact that
signals of interest are often sparse in some basis; this property is widely
used in data compression and more recently in compressed sensing. A lot
of work has been devoted to analyzing bases in which different data are sparse.
The goal of dictionary learning is to infer a basis in which the data
are sparse based purely on a large number of samples from the
data. The $M \times P$ matrix $Y$ then represents the $P$ samples of
$M$-dimensional data. The goal is to decompose $Y=F X + W$ into a $M
\times N$ matrix $F$, and a $N\times P$ sparse matrix $X$, $W$ is the noise. 

In this paper we will analyse the following teacher-student scenario of
dictionary learning. We will generate a random Gaussian matrix $F^0$ with iid
elements of zero mean and variance $1/N$, and a random Gauss-Bernoulli
matrix $X^0$ with fraction $0<\rho<1$ of non-zero elements. The non-zero
elements of $X^0$ will be iid Gaussian with mean $\overline X$ and
variance~$\sigma$.  The noise $W^0$ with elements $w^0_{\mu l}$ is also
iid Gaussian with zero mean and variance $\Delta$. We hence have 
\bea
        P_F(F_{\mu i}) &=& P_{F^0}(F_{\mu i}) = \frac{1}{\sqrt{2\pi/
            N}} e^{-\frac{N F^2_{\mu
              i}}{2}} \, , \label{eq:DL_PF}\\
       P_X(X_{il}) &=& P_{X^0}(X_{i l}) = (1-\rho) \delta(X_{il}) + \frac{\rho}{\sqrt{2
           \pi  \sigma}} e^{-\frac{(X_{il}-\overline X)^2}{2\sigma}}
       \, ,  \label{eq:DL_PX}\\ 
       P_{\rm out}(y_{\mu l} | z_{\mu l}) &=& \frac{1}{\sqrt{2\pi
           \Delta}} e^{-\frac{(y_{\mu l}-z_{\mu l})^2}{2\Delta}} \, , \label{eq:DL_Pout}
\eea
where $\delta(X) $ is the Dirac delta function. 
The goal is to infer $F^0$ and $X^0$ from the knowledge of $Y$ with
the smallest possible number of samples $P$.

In the noiseless case, $\Delta=0$, exact reconstruction might be
possible only when the observed information is larger that the
information that we want to infer. This provides a simple counting
bound on the number of needed samples~$P$:
\be
           P \ge \frac{\alpha }{ \alpha - \rho   } N\, .  \label{eq:bound_DL}
\ee

Note that the above assumptions on $P_F$, $P_X$ and $P_{\rm out}$ likely do not hold in any realistic
problem. However, it is of theoretical interest to analyze the average theoretical
performance and computational tractability of such a problem, as it gives a well
defined benchmark. Moreover, we anticipate
that if we develop an algorithm working well in the above case it
might also work well in many real applications where the above
assumptions are not satisfied, in the same spirit as the approximated
message passing algorithm derived for compressed sensing with zero
mean Gaussian measurement matrices \cite{DonohoMaleki09} works also
 for other kinds of matrices. 

Typically, we would look for an invertible basis $F^0$ with
$N=M$, in that case we speak of a {\it square} dictionary. However, with the compressed-sensing application in mind, it is
also very interesting to consider that $Y$ might be under-sampled
measurements of the actual signal, corresponding then to
$\alpha=M/N<1$.  Hence we will be interested in the whole range $0 < \alpha
\le 1$. The regime of $\alpha<1$ corresponds to an {\it overcomplete}
dictionary, in which case each of the $P$ measurements $\vec y_l$ is a sparse linear
combination of the columns (atoms) of the dictionary. 

We remind that the $N$ columns of the matrix $F^0$ can always be permuted
arbitrarily and multiplied by $\pm 1$. This is also true for rows of
the matrix $X^0$: all these operations do not  change $Y$, nor the posterior probability distribution. This is hence an intrinsic
freedom in the dictionary learning problems that we have to keep in
mind. Note that many works consider the dictionary to be column
normalized, which lifts part of the degeneracy in some optimization
formulations of the problem. In our setting the equivalent of column normalization
is asymptotically determined by the properties of the prior
distribution.

\paragraph{Blind matrix calibration.}

In dictionary learning one does not have any specific information about
the elements $F^0_{\mu i}$. However in some applications of compressed sensing
one might have an approximate knowledge of the measurement
matrix: it is often possible to use known
samples of the signal $X$ in order to calibrate the matrix in a supervised
manner (i.e. using known training samples of the signal). Sometimes, however, the known training samples are not
available and hence the only way to calibrate is to measure a number
of unknown samples and perform their reconstruction and calibration of
the matrix at the same time, such a scenario is called the {\it blind
  calibration}. 

In blind matrix calibration, the properties of the signal and the output function are the same as
in dictionary learning,
eqs.~(\ref{eq:DL_PX}-\ref{eq:DL_Pout}). As for the matrix
elements $F^0_{\mu i}$ one knows a noisy estimation
$F'_{\mu i}$. In this work we will assume that this estimation was
obtained from $F^0_{\mu i}$ as follows
\be
        F'_{\mu i} = \frac{F^0_{\mu i} + \sqrt{\eta} \xi_{\mu i} }{
          \sqrt{1+\eta} } \, ,\label{eq:eta_BMC}
\ee
where $\xi_{\mu i}$ is a Gaussian random variables with zero mean and
variance $1/N$. This way, if the matrix elements $F^0_{\mu i}$ have
zero mean and variance $1/N$, then the same is true for  the elements
$F'_{\mu i}$. 

The control parameter $\eta$ is then quantifying how well one
knows the measurement matrix.
It provides a way to interpolate between the pure
compressed sensing $\eta=0$, where one knows the measurement matrix $F^0$, and the dictionary learning problems $\eta
\to \infty$. Explicitly, the prior distribution of a given
element of the matrix $F$ is 
\be
       P_{F}(F_{\mu i}) = {\cal N}\left(\frac{F'_{\mu
             i}}{\sqrt{1+\eta}},\frac{\eta}{N(1+\eta)}\right)\ , \label{eq:PF_cal}
\ee
where ${\cal N}(a,b)$ is a Gaussian distribution with mean $a$ and
variance $b$.

\paragraph{Low-rank matrix completion}

Another special case of matrix factorization that is often studied is
the low-rank matrix completion. In that case one ``knows'' only a small (but
in our case finite when $N\to \infty$) fraction $\epsilon$ of elements of the $M \times P$ matrix
$Y$. Also one knows which elements are known and which are not; let us
call ${\cal M}$ the set on which elements are known, it is a set of
size $\epsilon M P$. In this case 
the output function is:
\bea 
                P_{\rm out}(y_{\mu l}| z_{\mu l}) &=&
               \frac{1}{\sqrt{2\pi \Delta}} e^{-\frac{(y_{\mu l}-z_{\mu l})^2}{2\Delta}} \quad {\rm if} \quad  \mu l
                \in {\cal M}\ , \nonumber\\
                &=& \frac{1}{\sqrt{2\pi }} e^{-\frac{y_{\mu l}^2}{2}}   \quad {\rm if} \quad  \mu l 
                \notin {\cal M}\, .
\eea
The precise choice on the function on the second line is arbitrary as
long as it does not depend on the $z_{\mu l}$. In what follows we will
assume that the $\epsilon M P$ known elements were chosen uniformly at
random.   

In low rank matrix completion $N$ is small compared to $M$ and
$P$, hence both $\pi$ and $\alpha$ are relatively large. Note,
however, that the limit we analyse in this paper keeps $\pi=O(1)$ and
$\alpha=O(1)$ while $N\to \infty$, whereas in many previous works on low-rank
matrix completion the rank was considered to be $O(1)$ and hence $\alpha$
and $\pi$ of order $O(N)$. Compared to those works the analysis here applies
to ``not-so-low-rank" matrix completion.  
The question is what fraction $\epsilon$ of elements of $Y$ needs
to be known in order to be able to reconstruct the two
matrices $F^0$ and $X^0$. 

For negligible measurement noise, $\Delta=0$, a simple counting bound
gives that the fraction of known elements we need for reconstruction
is at least 
\be
    \epsilon \ge \frac{\alpha + \pi}{\alpha \pi} \, .  \label{eq:bound_MC}
\ee

Again we will study the student-teacher scenario when a low-rank $Z$
is generated from $X^0$
and $F^0$ having iid elements distributed according to
eq. (\ref{eq:DL_PF}) and (\ref{eq:DL_PX}) with $\rho=1$ (no
sparsity). To construct $Y$ we keep a random fraction $\epsilon$ of 
elements of $Z$, and the goal is to reconstruct $X^0$ and $F^0$ from that
knowledge. 

We also note that variants on matrix completion where the output
channel $P_{\rm out}(y,z)$ keeps only the sign of $z$ were also
considered, called {\it 1-bit matrix completion}, and have some interesting
properties reported in \cite{davenport20141,davenportoverview15}.  

\paragraph{Sparse PCA and blind source separation}

Principal component analysis (PCA) is a well-known tool for dimensional
reduction. One usually considers the singular value decomposition (SVD) of a
given matrix and keeps a given number of largest values, thus
minimizing the mean square error between the original matrix and its
low-rank approximation. The SVD is computationally tractable, and
provides the minimization of the mean square error between the original matrix and its
low-rank approximation. However, with additional constraints there is
no general computationally tractable approach. 

A variant of PCA that is relevant for a number practical application requires
that one of the low-rank components is sparse. The goal is then to
approximate a matrix $Y$ by a product $F X$ where $F$ is a tall
matrix, and $X$ a wide sparse matrix. The teacher-student scenario for
sparse PCA that we will analyse in this paper uses
eq.~(\ref{eq:DL_PF}-\ref{eq:DL_Pout}) and the matrix dimensions are
such that $\pi=P/N$ and $\alpha=M/N$ are both large, but still of
order $O(1)$, and comparable one to the
other. Hence it is only the region of interest for  $\alpha$ and $\pi$
that makes this problem different from the dictionary learning. Note
that, in the same way as for the matrix completion, many works in the
literature consider $N = O(1)$ whereas here we have $N \to \infty$ in
such a way that $\pi = P/N= O(1)$ and $\alpha=M/N = O(1)$.  Hence we
work with low rank, but not as low as most of the existing
literature. For a statistical physics study following the lines of the
present paper where the rank is $O(1)$ see \cite{lesieur2015mmse}.

In the zero measurement noise case, $\Delta=0$, the simple counting
bound gives that the rank $N$ for which the reconstruction problem may
be solvable needs to be smaller than 
\be
   N \le \frac{MP}{M+ \rho P}\, .  \label{eq:SPCA_bound}
\ee

One important application where sparse PCA is relevant is the blind
source separation problem. Given $N$ $\rho$-sparse (in some known
basis) source-signals of dimension $P$, they are mixed in an unknown
way via a matrix $F$ into $M$ channel measurements $Y$. In blind
source separation typically both the number of sources $N$ and the number of
channels (sensors) $M$ are small compared to $P$. When $N<M$ we obtain an
overdetermined problem which may be solvable even for $\rho=1$. More
interesting is the undetermined case with the number of sensors smaller
than the number of sources, $M<N$, which would not be
solvable unless the signal is sparse $\rho<1$ (in some basis), in that
case the bound (\ref{eq:SPCA_bound}) applies.

\paragraph{Robust PCA}

Another variant of PCA that arises often in practice is the robust PCA, where the
matrix $Y$ is very close to a low rank matrix $F X$ plus a sparse
full rank matrix. The interpretation is that $Y$ was created as low
rank but then a small fraction of elements was distorted by a large additive
noise. The resulting $Y$ is hence not low rank. 

In this paper we will analyse a case of robust PCA when $F$ and $X$
are generated from eq.~(\ref{eq:DL_PF}-\ref{eq:DL_PX}) with $\rho=1$
and the output function is 
\be
    P_{\rm out}(y_{\mu l} | z_{\mu l}) = \epsilon \frac{1}{\sqrt{2\pi
           \Delta_s}} e^{-\frac{(y_{\mu l}-z_{\mu l})^2}{2\Delta_s}}  +
       (1-\epsilon) \frac{1}{\sqrt{2\pi \Delta_l}   } e^{-\frac{(y_{\mu
             l}-z_{\mu l})^2}{2 \Delta_l   }} \, ,\label{eq:RPCA}
\ee
where $\epsilon$ is the fraction of elements that were not largely distorted, 
$\Delta_s \ll 1$ is the small measurement noise on the non-distorted
elements, $\Delta_l$ is the large measurement noise on the distorted
elements. We will require $\Delta_l \approx \overline X^2+\sigma$ to
be comparable to the variance of $z_{\mu l}$ such that there is no
reliable way
to tell which elements were distorted by simply looking at the
distribution of $y_{\mu l}$. The parameters regime we are interested in here is $\pi$ and
$\alpha$ both relatively large and comparable one to another. 

In robust PCA in the zero measurement noise case, $\Delta_s=0$, the simple counting
bound gives that the fraction of non-distorted elements $\epsilon$ under which the
reconstruction may still
be solvable needs to satisfy the same bound as for matrix completion
(\ref{eq:bound_MC}). Indeed this counting bound does not distinguish between the
case of matrix completion when the position of known elements of $Y$ is known
and the case of RPCA when their positions are unknown.

\paragraph{Factor analysis }

One major objective of multivariate data analysis is to infer an appropriate mechanism from
which observed high-dimensional data are generated. Factor analysis (FA) is a representative methodology for this
purpose. Let us suppose that a set of $M$-dimensional vectors $\by_1, \by_2,\ldots, \by_P$ is given and its mean is set to zero by a
pre-processing. Under such a setting, FA assumes that each observed vector $\by_l$ is generated by $N(\le M)$-dimensional
{\em common factor} $\bX_l$ and $M$-dimensional {\em unique factor} $\bw_l$ as $\by_l = F\bX_l+\bw_l$, where $F \in  \mathbb{R}^{M\times N}$
 is 
termed the {\em loading matrix}. The goal is to
 determine the entire set of $F$, $X = (\bX_1, \bX_2, \ldots , \bX_P )$, and $W = (\bw_1,\bw_2, \ldots ,\bw_P )$ from only 
$Y = (\by_1, \by_2, \ldots , \by_P )$.
Therefore, FA is also expressed as a factorization problem of the form of $Y = FX +W$ in matrix terms.

The characteristic feature of FA is to take into account the site dependence of the output function as
\begin{eqnarray}
P_{\rm out}(y_{\mu l}|z_{\mu l}, \psi_\mu) =
\frac{1}{\sqrt{2\pi \psi_{\mu}}} e^{-\frac{(y_{\mu l}-z_{\mu
      l})^2}{2\psi_\mu}}\, ,
\label{FAout}
\end{eqnarray}
where  $\psi_\mu$ denotes the variance of the $\mu$-th component of the unique factor. In addition, it is normally assumed
that $\bX_l$ $(l = 1, 2, \ldots, P)$ independently obeys a zero mean multivariate Gaussian distribution, the variance of
which is set to the identity matrix in the basic case. These assumptions make it possible to express the log-likelihood
of $F, \Psi$ given $Y$ in a compact manner as $\log P(Y |F, \Psi) = 
\log \left (
\int \prod_{\mu=1,l=1}^{M,P}P_{\rm out}(y_{\mu l}|z_{\mu l},\psi_\mu)
\prod_{i=1,l=1}^{N,P} P_X(X_{i l}) {\rm d}X \right )
=-\frac{P}{2} 
\log \det \left (FF^{\rm T}+\Psi \right ) -\frac{1}{2} 
\sum_{l=1}^P
\by_l^{\rm T} \left (FF^{\rm T}+\Psi \right )^{-1} \by_l
+ const$, where 	$\Psi = (\psi_\mu \delta_{\mu \nu}) \in \mathbb{R}^{ M\times M}$. In a standard scheme, 
$F$ and $\Psi$, which parameterize the generative mechanism of the observed data $Y$, are determined by maximizing the
log-likelihood function \cite{Joreskog1969}. After obtaining these, the posterior distribution $P(X|Y, F^{\rm ML},	\Psi^{\rm ML})$ is used to estimate
the common factor $X$, where $F^{\rm ML}$	and $\Psi^{\rm ML}$ are the maximum likelihood estimators of $F$ and $\Psi$, respectively. Finally,
unique factor $W$ is determined from the relation $Y = FX +W$. Other heuristics to minimize certain discrepancies between
the sample variance-covariance matrix $P^{-1}Y Y^{\rm T}$ and $FF^{\rm T} +\Psi$ are also conventionally  used for determining $F$ and $\Psi$.
As an alternative approach, we employ the Bayesian inference for FA.

\paragraph{Non-negative matrix factorization }
In many application of matrix factorization it is known that both the
coefficient of the signals $X$ and the coefficients of the
dictionary $F$ must be non-negative. In our setting this can be taken
into account easily by imposing the distributions $P_X$ and $P_F$ to have
non-negative support. 
In the student-teacher scenario of this paper we can
hence consider the nonzero elements of $X$ to be $P_X(X)=0$ for
$X<0$ and $P_X(X)= N(\overline X, \sigma)$ for $X>0$, and analogously
for $P_F$.

\subsection{Related work and positioning of our contribution} 
Matrix factorization and its special cases as mentioned above are
well-studied 
problems with extensive theoretical and algorithmic literature
that we cannot fully cover here. We will hence only give
examples of relevant works and will try to be exhaustive only
concerning papers that are very closely related to our work (i.e. papers
using message-passing algorithms or analyzing the same phase
transitions in the same scaling limits).

The dictionary learning problem was identified in the work of
\cite{olshausen1996emergence,OlshausenField97} in the context of image
representation in the visual cortex, and the problem was studied
extensively since \cite{kreutz2003dictionary}.  Learning of
overcomplete dictionaries for sparse representations of data has many
applications, see e.g.  \cite{lewicki2000learning}. 
MOD \cite{Engan99} and K-SVD \cite{AharonElad06b} are two representative algorithms for the dictionary learning. 
Several authors studied the identifiability of
the dictionary under various (in general weaker than ours) assumptions,
e.g. \cite{AharonElad06,Vainsencher11,JenattonGribonval2012,spielman2012exact,arora2013new,agarwal2013exact,gribonval2013sample}.
An interesting view on the place of sparse and redundant
representations in todays signal processing is given in
\cite{elad2012sparse}.

A nice overview of recent progress on low-rank matrix factorization
from incomplete observations is given in \cite{davenportoverview15}. 
The two closely related problems of sparse principal component analysis or blind
source separation is also explored in a number of works, see e.g. 
\cite{lee1999blind,zibulevsky2001blind,bofill2001underdetermined,georgiev2005sparse,d2007direct}. 
A short survey on the topic with relevant references can be found in
\cite{gribonval2006survey}. 
Matrix completion is another problem that belongs to the class treated
in this paper. Again, many important works were devoted to this
problem giving theoretical guarantees, algorithm and applications, see
e.g. \cite{candes2009exact,candes2010power,candes2010matrix,keshavan2010matrix}. 
Another related problem is the robust principal component analysis
that was also studied by many
authors; algorithms and theoretical limits were analyzed in \cite{chandrasekaran2009sparse,yuan2009sparse,wright2009robust,candes2011robust}.

Our work differs from the mainstream of existing literature in its
general positioning. Let us mention here some of the main differences
with most other works.

\begin{itemize} 
   \item  Most existing works concentrate on finding theoretical
     guarantees and algorithms that work in the worst possible case of
     matrices $F$ and $X$. In our work we analyze the typical cases when elements of $F$ and
     $X$ are generated at random. Arguably a worst case analysis is
     useful for practical applications in that it provides some
     guarantee. On the other hand, in some large-size applications one
     can be confronted in practice with a
     situation which is closer to the typical case that we study
     here. Our typical-case analysis provides
     results that are much tighter than those usually obtained in
     literature in terms of both achievability and computational tractability.  For
     instance our results imply much smaller sample complexity, or much smaller
     mean-squared error for a given signal-to-noise ratio, etc. 
   \item Our main contribution is the asymptotic phase diagram of
     Bayes-optimal inference in matrix factorization. Special cases of
     our result cover important problems in signal processing and
     machine learning. In the present work we do not concentrate on the
     validation of the associated approximate message-passing
     algorithm on practical examples, nor on its comparison with
     existing algorithms. A contribution in this direction can be
     found in \cite{krzakala2013phase,parker2014bilinear}.
   \item A large part of existing machine-learning and
     signal-processing literature provides theorems.
     Our work is based on statistical physics methods that are
     conjectured to give exact results. While many results obtained
     with these methods on a variety of problems have been indeed
     proven later on, a general proof that these methods are rigorous is not known yet.
   \item Many existing algorithms are based on convex relaxations of
     the corresponding problems. In this paper we analyze the
     Bayes-optimal setting. Not surprisingly, this setting gives a much
     better performance than convex relaxation. The reason why it is
     less studied is that it is often considered as hopelessly
     complicated from the algorithmic perspective; however some recent
     results using message-passing algorithms which stand at the core
     of our analysis have shown very good performance in the
     Bayes-optimal problem. Besides, the Bayes-optimal offers an
     optimal benchmark for testing algorithmic methods.
   \item When treating low-rank matrices we consider the rank to be a finite
     fraction of the total dimension, whereas most of existing
     literature considers the rank to be a finite constant. The AMP
     algorithm derived in this paper can of course be used as a
     heuristics also in the constant rank case, but our results about
     MMSE are exact only in the limit where the rank is a finite
     fraction of the total dimension. Also note that for the special
     case of rank one the AMP algorithm was derived and
     analysed rigorously \cite{rangan2012iterative,deshpande2014information}. 
   \item When treating sparsity we consider the number of non-zeros is
     a finite fraction of the total dimension, whereas existing
     literature often considers a constant number of non-zeros. Again the AMP
     algorithm derived in this paper can of course be used as a
     heuristics also in the constant number of non-zeros case, but our results about
     MMSE are exact only in the limit where the density is a finite
     fraction of the total dimension.  
\end{itemize}

The paper is organized as follows: 
First, Sec.~\ref{sec:background},
we explain the statistical physics background
for the Bayes-optimal inference.
In Sec.~\ref{sec:AMP} we give
a detailed derivation of the approximate message passing algorithm for the matrix
factorization problem. This algorithm is equivalent to maximization
of the Bethe free entropy, whose expression is discussed in Sec.~\ref{Sec:Bethe}. These first two sections thus state our algorithmic
approach to matrix factorization.
The asymptotic performance of the AMP algorithm and the Bayes-optimal MMSE are
analyzed in Sec.~\ref{Sec:AA} using two technics: i) the state
evolution of the AMP algorithm and ii) the replica
method. As usual, the two methods are found to agree and to give the same
predictions.
We then use these results in order to study some exemples of matrix
factorization problems in Sec.~\ref{sec:phase}. In particular we derive
the phase diagram, the MMSE and the sample complexity of dictionary
learning, blind matrix calibration, sparse PCA, blind source
separation, low rank matrix completion, robust PCA and factor
analysis.
Our results are summarized and discussed in the conclusion in
Sec.~\ref{sec:conclusion}.

\section{Elements of the background from statistical physics}
\label{sec:background}

\subsection{The Nishimori identities}
\label{sec:Nishi}

There are important identities that hold for the Bayes-optimal
inference and that simplify many of the calculations that follow. In
the physics of disordered systems these identities are known as the
Nishimori identities \cite{OpperHaussler91,Iba99,NishimoriBook}. 
Basically, they
follow from the fact that the true
signal $F^0$, $X^0$ is an equilibrium configuration with respect to the Boltzmann
measure $P(F,X|Y)$ (\ref{eq:post}). Hence many properties of the true
signal $F^0$, $X^0$ can be computed by using averages over
the distribution $P(F,X|Y)$ even if one does not  know $F^0,X^0$  precisely (\ref{eq:post}).

In order to derive the Nishimori identities, we need to define three types of
averages: the thermodynamic average, the double thermodynamic
average, and the disorder average.
\begin{itemize}
\item
 Consider a function $A(F,X)$ depending on a ``trial''
configuration $F$ and $X$. We define the ``thermodynamic average'' of
$A$ given $Y$ as:
\be
       \langle A(F,X) \rangle_{F,X|Y} \equiv \int {\rm d} X \,  {\rm d} F  \,
       A(F,X)\,  P(F,X
       |Y)\, ,
\ee
where $ P(F,X|Y)$ is given by Eq.~(\ref{eq:post}). The thermodynamic
average $\langle A(F,X) \rangle_{F,X|Y}$ is a function of $Y$.

\item
Similarly, for a
function $A(F_1,X_1,F_2,X_2)$ that depends on {\it two} trial configurations
$X_1$, $F_1$ and $X_2$, $F_2$, we define the ``double thermodynamic
average'' of 
$A$ given $Y$ as:
\be
       \langle \langle A(F_1,X_1,F_2,X_2) \rangle \rangle_{F_1,X_1,F_2,X_2|Y} \equiv \int
       {\rm d} X_1 \,  {\rm d} F_1 \,  {\rm d} X_2 \,  {\rm d} F_2 \,
       A(F_1,X_1,F_2,X_2) \, P(F_1,X_1
       |Y) P(F_2,X_2
       |Y) \, ;
\ee
this is again a function of $Y$.

\item
For a function $B$
that depends on the measurement $Y$ and on the true signal $F^0,X^0$, we define the ``disorder average'' as
\be
             [B(F^0,X^0,Y)]_{F^0,X^0,Y}  \equiv \int {\rm d}Y \, {\rm d}X^0 \,  {\rm d}F^0 
             \,  P_X(X^0) \, P_F(F^0) \,  P_{\rm out}(Y| F^0 X^0) \, B(F^0,X^0,Y) \, .
\ee
Note that if the quantity $B$ depends only on $Y$, then we have
 \be
[B(Y)]_Y \equiv \int
{\rm d}Y {\cal Z}(Y) B(Y)\, .
\label{disav_simple}
\ee
This is simply because the partition function ${\cal
  Z}(Y)$ is  ${\cal Z}(Y) = \int {\rm d}X^0 \,
{\rm d}F^0 \,  P_X(X^0) \, P_F(F^0) \, P_{\rm out}(Y| F^0 X^0) $.
\end{itemize}

Let us now derive the Nishimori identities. We consider a
function $A(F,X,F^0,X^0)$ that depends on the trial configuration
$F,X$ and on the true signal $F^0,X^0$. Its thermodynamic average
$\langle A(F,X,F^0,X^0) \rangle_{F,X|Y}$ is a function of the
measurement $Y$ and  the true signal $F^0,X^0$. The disorder average
of this quantity can be written as
\bea
 &&[ \langle A(F, X,F^0,X^0) \rangle_{F,X|Y} ]_{F^0,X^0,Y}\nonumber \\
& &= 
 \int {\rm d}F^0 \, \, {\rm d}X^0 \,  {\rm d}Y
 \ P_X(X^0)\, P_F(F^0) \,  P_{\rm out}(Y|F^0 X^0)   \int {\rm d} F  \,  {\rm d} X\, A(F,X,F^0,X^0)\,
   P(F,X|Y)\nonumber \\ &&=
\int {\rm d}Y\, {\cal  Z}(Y) \int
 {\rm d}F^0 \, \, {\rm d}X^0 
{\rm d} F  \,  {\rm d} X
\,  A(F^0,X^0,F,X) \frac{ P_X(X^0) P_F(F^0) P_{\rm out}(Y| F^0 X^0)
}{{\cal Z}(Y)} P(F,X|Y)\ .
\eea
In this last expression, renaming $F,X$ to $F_1,X_1$ and $F^0,X^0$ to
$F_2,X_2$,
 we see that the average over $F,X,F^0,X^0$ is nothing but
the double thermodynamic average:
 \bea
[ \langle A(F, X,F^0,X^0) \rangle_{F,X|Y} ]_{F^0,X^0,Y} &=&\int {\rm
  d}Y\,  {\cal Z}(Y)\int  {\rm d}F_1\,  {\rm d}X_1 \,  {\rm
  d}F_2 \, {\rm d}X_2 \, A(F_1,X_1,F_2,X_2)  P(F_1, X_1|Y)
P(F_2,X_2|Y)
\nonumber \\
&&= [ \langle \langle A(F_1,X_1,F_2,X_2) \rangle \rangle_{F_1,X_1,
  F_2,X_2|Y} ]_{Y} \ ,
\eea
where in the last step we have used the form (\ref{disav_simple}) of
the disorder average.

The identity
\bea
[ \langle A(F, X,F^0,X^0) \rangle_{F,X|Y} ]_{F^0,X^0,Y} = [ \langle \langle A(F_1,X_1,F_2,X_2) \rangle \rangle_{F_1,X_1, F_2,X_2|Y} ]_{Y} \label{Nishi_gen}
\eea
is the general form of the Nishimori
identity. Written in this way, it holds for many inference problems where the
model for signal generation is known.

Self-averaging is a property that is assumed to hold for many
quantities in statistical physics. Self-averaging means that  in
the thermodynamic limit (as introduced in the first paragraph of
section \ref{sec:main_results}) 
the distribution of $\langle A(F,X,F^0,X^0) \rangle_{F,X|Y}$
concentrates with respect to the realization of disorder $Y,F^0,X^0$.
In other words for large system sizes the quantity
$\langle A(F,X,F^0,X^0) \rangle_{F,X|Y}$ converges with probability
one to its average over disorder, $[ \langle A(F, X,F^0,X^0)
\rangle_{F,X|Y} ]_{F^0,X^0,Y} $.  Self-averaging implies the existence of
the thermodynamic limit and is in general very challenging to prove rigorously. Self-averaging makes the
Nishimori identity very useful in practice.

To give one particularly useful example, let us define $m_X=(1/(PN))\, \sum_{il}
X^0_{il}  X_{il} $ and $q_X= (1/(PN))\, \sum_{il} X^1_{il}  X^2_{il}
$. The
Nishimori identity states that  $[\langle m_X\rangle_{F,X|Y} ]_{F^0,X^0,Y}=[\langle \langle
q_X\rangle\rangle]_{F_1,X_1, F_2,X_2|Y} ]_{Y}$. Assuming that the
quantities $m_X$ and $q_X$ are self-averaging,  we obtain in the
thermodynamic limit, for almost all $Y$: $\langle m_X\rangle_{F,X|Y}=\langle \langle
q_X\rangle\rangle_{F_1,X_1, F_2,X_2|Y}$. Explicitly, this gives:
\bea
(1/(PN))\, \sum_{il}
X^0_{il} \langle  X_{il}\rangle_{F,X|Y}=(1/(PN))\, \sum_{il} \left(\langle
X_{il}\rangle_{F,X|Y} \right)^2 + o_N(1)\ . \label{eq:Nish_simplest}
\eea
where $o_N(1)$ is going to zero as $N\to \infty$. 
This is a remarkable identity concerning the mean
of $X_{il}$ with the posterior distribution $\nu_{il}(X_{il})$.  The left-hand
side measures the overlap between this mean and the sought true value
$X^0_{il}$. The right-hand side measures the self overlap of the mean,
which can be estimated without any knowledge of the  true value
$X^0_{il}$, by generating two independent samples from $P(F,X|Y)$. 

By symmetry, all these examples apply also to averages and
functions of the matrix $F$: 
\bea
(1/M)\, \sum_{\mu l}
F^0_{\mu l} \langle  F_{\mu l}\rangle_{F,X|Y}=(1/M)\, \sum_{il} \left(\langle
F _{\mu l}\rangle_{F,X|Y} \right)^2 + o_N(1)\ .
\eea

Validity of the relation (\ref{eq:Nish_simplest}) also follows
straightforwardly from the self-averaging property and an elementary
relation for conditional expectations $ \mathbb{E} (
\mathbb{E}(X|Y) X) = \mathbb{E}( (\mathbb{E}(X|Y))^2) $.

Another example of a Nishimori identity used in our derivation is
eq. (\ref{eq:gout_NL}). Note that this one is crucial to maintain certain
variance-related variables strictly positive as explained in section
\ref{Sec:Nish}. Without eq. (\ref{eq:gout_NL}) the AMP algorithm may have
pathological numerical behavior causing that some by definition strictly positive quantities
become negative. Deeper understanding of this problem in cases where Nishimori
identities cannot be imposed is under investigation. 
 
Nishimori identities also greatly simplify the state evolution
equations of section \ref{sec:AA_NL}. In general 
state evolution involves nine coupled equations. However, after imposing Nishimori identities,
eqs. (\ref{eq:NL_qm}-\ref{eq:NL_chi}), we are left with only three
coupled equations.

\subsection{Easy/hard and possible/impossible phase transitions}

Fixed points of
equations (\ref{eq:mx_NL_i}-\ref{eq:mhat_NL_i}) allow us to evaluate the
MMSE of the matrix $F$ and of the
matrix $X$. We
investigate two fixed points: The one that is reached from the
``uninformative'' initialization (\ref{eq:DE_NL_init_i}), and the fixed
point that is reached 
with the informative (planted) initialization (\ref{eq:init_inf}). 
If these two initializations lead to a different fixed point
it is the one with the largest value of the Bethe free entropy
(\ref{NishimoriRSfreeenergy_i}) that corresponds to the Bayes-optimal
MMSE. The reason for this is that the Bethe free entropy is the exponent of the
normalization constant in the posterior probability distribution: a
larger Bethe free entropy is hence related to a fixed point that
corresponds to an exponentially larger probability. 
Furthermore,  in cases where the uninformative initialization does not
lead to this Bayes-optimal MMSE we anticipate that the optimal solution is
hard to reach for a large class of algorithms.

Depending on the application in question and the value of parameters
($\alpha,\pi,\rho,\epsilon,\dots$) we can sometimes identify a phase
transition, i.e. a sharp change in behavior of the MMSE. As in
statistical physics it is instrumental to distinguish two kinds of
phase transitions 
\begin{itemize}
\item {\it Second order phase transition:} In this case there are two
  regions of parameters. In one, the recovery performance
  is poor (positive MMSE); in the other one the recovery is perfect
  (zero MMSE). This situation can arrive only in  zero noise, with
  positive noise there is a smooth ``crossover'' and the transition
  between the phase of good and poor recovery is not sharply defined. 
 Interestingly, as we will see, in all examples where we observed the
 second order phase transition, its location coincides with the simple
 counting bounds discussed in Section \ref{sec:examples}.  
 In the phase with poor MMSE, there is simply not enough
 information in the data to recover the original signal (independently
 of the recovery algorithm or its computational complexity). 
 Experience from statistical physics tells us that in problems where
 such a
 second order phase transition happens, and more generally in cases
 where the state evolution (\ref{eq:mx_NL_i}-\ref{eq:mhat_NL_i}) has a unique fixed point, there is no
 fundamental barrier that would prevent good algorithms to
 attain the MMSE. And in particular our analysis suggest that in the
 limit of large system sizes the AMP algorithm derived in this paper
 should be able to achieve this Bayes-optimal MMSE. 

\item {\it First order phase transition:} A more subtle phenomenology
  can be observed in the low-noise regime of dictionary learning, sparse PCA
  and blind matrix calibration. In some region of parameters, that we
  call the ``spinodal" region, the
  informative (planted) and uninformative initializations do not
  lead to the same fixed point of the state evolution equations. 
  The spinodal regime itself may divide into two parts - the one where
  the uninformative fixed point has a larger free entropy, and the
  ``solvable-but-hard" phase
  where the informative fixed point has a larger free entropy. The
  boundary between these two parts is the first order phase
  transition point. We anticipate that in the solvable-but-hard phase
  the Bayes-optimal MMSE is not achievable by the AMP algorithm nor by
  a large class of other known algorithms. 
  The first order phase transition is associated with a
  discontinuity in the MMSE. The MSE reached from the uninformative
  initializations will be denoted AMP-MSE and is also discontinuous. 
\end{itemize}

In the case of the first order phase transition it will hence be
useful to distinguish in our notations between the minimal achievable
MSE that we denote MMSE, and the MSE achievable by the AMP-like
algorithms that we denote AMP-MSE. When MMSE$=$AMP-MSE in the large
$N$ limit we say that AMP is asymptotically optimal. The region where
in the large size limit MMSE$<$AMP-MSE is called the {\it spinodal}
regime/region.

\subsection{Why our results are not rigorous? Why do we conjecture
  that they are exact?}
\label{sec:nonrigorous}

In this section we aim at summarizing the assumptions that we make but
do not prove along the derivation of our results presented in sections
\ref{sec:AMP}, \ref{Sec:Bethe}, \ref{Sec:AA}. In section
\ref{sec:phase} we then give the MMSE
and AMP-MSE as obtained from numerical evaluation of eqs.
(\ref{eq:mx_NL_i}-\ref{eq:mhat_NL_i}). 

\subsubsection{The statistical-physics strategy for solving the problem}

Let us first state the general strategy from a physics point of
view. We use two complementary approaches, the cavity method and the
replica method. 

Our main goal is to derive the MMSE for the matrix
factorization problem in the Bayes-optimal setting. Our main tool is
the cavity method, and it presents two advantages with respect to the
replica method:
 1) during the derivation of the MMSE we obtain the AMP
algorithm as a side-product and  2) based on experience of the last
three decades, it is likely that a rigorous
proof of our conjectures will follow very closely this path. In the cavity
method we first
assume that there is a fixed point of the belief propagation equations
that correctly describes the posterior probability distribution, and that the BP equations (initialized randomly, or in the case where a first order
phase transition is present, initialized close to the planted
configuration) converge to it. Then we are interested in the average
evolution of the
BP iterations. This can be understood through a statistical analysis of the BP equations
in which we keep only the leading terms in the large $N$ limit, an approach
called the cavity method \cite{MezardParisi87b} in the physics literature.  The result of this analysis are the state evolution
equations that should be asymptotically exact, they do not depend on the size of the system $N$ anymore.  
There is
one known way by which the assumptions made in this derivation can
fail, this way is
called replica-symmetry-breaking (RSB). Fortunately, in the Bayes
optimal inference,  RSB is excluded, as we explain below. 

Our second analysis uses the replica method. In the replica approach one
computes the average of the $n$-th power of the normalization of the
posterior probability distribution. Then one uses the limit $n\to 0$ to
obtain the average of its logarithm. Doing this, one needs to evaluate a
saddle point of a certain potential and one assumes that this saddle point is
restricted to a certain class that is called ``replica symmetric",
eq. (\ref{RSorderparameter}). Again this assumption is justified in 
the Bayes optimal inference, because we know that there is no RSB in
this case.

Our two analyses, using the cavity method and the replica method, lead
to the same set of
closed equations for the MMSE of the problem. 
This important crosscheck, and the very reasonable nature of the
assumptions described below (based on our experience in using this
kind of approach in various settings) lead us to conjecture
that the results described in this paper are exact. 

\subsubsection{What are the assumptions?}

Let us start with the cavity method, which is in general much more
amenable to a rigorous analysis. Let us hence describe the assumptions
made  in section \ref{sec:AMP}. 
\begin{itemize}
\item
Our main assumption is that in the
leading order in $N$ the true marginal probabilities can be obtained
from a fixed point of the belief propagation equations (\ref{message_m})-(\ref{message_tn}) as written for
the factor graph in Fig.~\ref{FactorGraph}. For this to be possible
the incoming messages on the right hand side of
eqs. (\ref{message_m})-(\ref{message_tn}) must be
conditionally independent (in the thermodynamic limit, as
defined is section \ref{sec:main_results} ), in which case we can write the joint
probabilities as products over the messages. If the factor graph was a tree
this assumption would be obviously correct. On factor graphs that are locally
tree-like these assumptions can be justified (often also rigorously) by
showing that the correlations decay fast enough (see for instance \cite{MezardMontanari09}).
The factor graph in
Fig.~\ref{FactorGraph} is far from a tree. However, from the point of view of conditional independence
between messages, this factor graph resembles the one of compressed
sensing for which the corresponding results were proven rigorously
\cite{BayatiMontanari10,DonohoJavanmard11}. It is hence reasonable to
expect that matrix factorization belongs to the same class of problems
where BP is asymptotically exact in the above sense.  

\item 
We assume that the iterations of equations
(\ref{message_m})-(\ref{message_tn}) converge for very large systems
($N\to \infty$) to this fixed point that
describes the true marginal probabilities, provided that the iteration
is started from the right initial condition (in the presence of a first order phase
transition we need to consider initialization of the iteration in the
planted configuration in order to reach the right fixed point). 
\end{itemize}

Given these assumptions the rest of section \ref{sec:AMP} is
justified by the fact that in the derivation we only neglect terms that do not contribute to
the final MSE in the thermodynamic limit. 

The main assumptions made in the replica calculation is
that ``self-averaging'' applies (this is a statement on the concentration
of the measure, which basically assumes that the averaged properties are
the same as the properties of a generic single instance of the problem) and that we can exchange the limits $\lim_{n\to 0}$ and $\lim_{N\to
  \infty}$. On top of these, it relies on the replica symmetric
calculation, which is justified  here because we are interested in
evaluating Bayes optimal MMSE.  Unfortunately in the mathematical literature there is so
far very little progress in proving these basic assumptions of the
replica method, even in problems that are much easier than matrix
factorization. It can be remembered that the original motivation for
introducing the cavity method \cite{MezardParisiVirasoro86} was precisely to provide an alternative
way to the replica computations, which could be stated in clear
mathematical terms.

In this paper we follow the physics strategy and we hence provide a
conjecture for evaluating the MMSE in matrix
factorization. We do anticipate that further work using the path of
the cavity approach will
eventually provide a proof of our conjecture. 
Let us mention that rigorous results exist for closely related
problems, namely the problem of compressed sensing
\cite{BayatiMontanari10,DonohoJavanmard11}, and also rank-one matrix
factorization \cite{rangan2012iterative,deshpande2014information}. None of these
apply directly to our case where the rank is extensive and where the
matrix $F$ is not known. A non-trivial generalization of these works will be needed in order to prove our results.

\subsubsection{Cavity and replica method for other problems}
The cavity and replica
methods, as we use them use in the present paper,  rely on the set of assumptions listed above. It is worth to note that over
the years they have been applied successfully to a wide range of problems. 
There is nowadays a range of results that can be ``derived'' (and in many
of those cases that was the original derivation)
using the cavity or/and the replica methods and that have
been proven fully rigorously. The list is long, but to cite a couple
of interesting examples we can mention the proof of the satisfiability threshold
conjecture \cite{DingSlySung14}, performance of low-density parity check codes
\cite{RichardsonUrbanke08}, the optimal solution of the linear assignment problem
\cite{aldous2001zeta}, the analysis of the compressed sensing problem
\cite{DonohoMaleki09,BayatiMontanari10}, and many more.

\subsection{Absence of replica symmetry breaking in Bayes-optimal inference}

The study of the Bayes-optimal inference, i.e. the case when the prior
probability distribution matches the true distribution with which the
data have been generated, is fundamentally
simpler than the general case. The fundamental reason for this
simplicity
 is that, in the Bayes-optimal case, a major complication referred to as {\it static replica symmetry
breaking} (RSB) in statistical physics literature
does not appear  \cite{NishimoriBook}. RSB is a source of computational complications in
many optimization and constraint satisfaction problems such as the
random K-satisfiability \cite{MezardParisiZecchina02} or models of
spin glasses \cite{MezardParisi87b}. 

One common way to define RSB is via the overlap distribution $P(q)$
\cite{NishimoriBook,MezardParisi87b,MezardMontanari09}. We define an
overlap between two configurations $X^1$ and $X^2$ as $q_X(X^1,X^2) = (1/(PN)) \sum_{il}
X^1_{il} X^2_{il} $. The overlap distribution function is then defined
(for a given realization of the problem $X^0, F^0, Y$) as the
probability distribution of  $q_X(X^1,X^2)$ over the posterior measure
$P(X,F|Y)$. For problems that do not have any permutational symmetry
(such as the permutation of the $N$ rows of $X$ and columns of $F$), we say the system is replica symmetric when $\lim_{N\to \infty} P(q) = \delta (q-q^0)$ for almost
all choices of  $X^0, F^0, Y$, and that there is replica symmetry
breaking otherwise. The number of steps of RSB then correspond to the
number of additional delta peaks in the limiting distribution
$P(q)$. When the limiting distribution $P(q)$ has a continuous support
then we talk about full-RSB. For systems with permutational symmetry
the corresponding number of peaks gets simply multiplied by the size of the symmetry group. 

Let us further define a so-called magnetization of configuration $X^1$
with respect to some reference configuration $X^0$ as
$m_X(X^0,X^1)= (1/(PN)) \sum_{il} X^1_{il} X^2_{il}$. Such a
magnetization and its variance can be computed from first and second
derivative of the free entropy density. The derivative is taken with
respect to a so-called magnetic field which is an auxiliary parameter
introduced in the posterior probability for this purpose. In
statistical physics, we expect (and in some case, actually prove) some
standard properties of the free entropy such as its self-averaging and
its analyticity (except at phase transition points) with respect to
such parameters. From this reasoning it follows that whereas the mean
of the magnetization is of order $O(1)$, its variance is of order
$O(1/N)$. Hence in the limit $N\to \infty$ the distribution (over
$P(X,F|Y)$) of magnetization is a delta peak.  If the reference
configuration $X^0$ is the planted configuration, then self-averaging
implies that the distribution of $m_X(X^0,X^1)$ with respect to both
its arguments converges to a delta function. From the Nishimori
identities it follows that the same has to be true for the
distribution of the overlaps $P(q)$. Therefore only a $P(q)$
converging to a delta function in the thermodynamic limit is allowed
in the Bayes optimal inference setting.
In a more technical side, it also follows from the Nishimori
identities that the two-point correlations between variables decay
sufficiently fast as proven in \cite{Montanari08estimating}, this
excludes the existence of the static RSB for the posterior measure
$P(F,X|Y)$.

Let us stress however that when there is a mismatch between the prior
distributions and the distributions from which the signal was truly
generated, then replica symmetry breaking can happen. The distribution
of overlaps may become non-trivial even if the distribution of
magnetization is not, and in order to provide some exact results or
conjectures for this situation our analysis would have to be revised
accordingly. This might turn out as very non-trivial (see for instance
the case of compressed sensing with the $\ell_p$-norm reconstruction
when $p<1$ \cite{KabashimaWadayama09}) and is left for future work.

\subsection{Learning of hyper-parameters}
\label{sec:EM}

To obtain the main results of this paper we assume that the priors $P_X$
and $P_F$ are known, including their related parameters. In practice,
even if one knows a parametric family of probability
distributions that approximates well the matrix elements of $F$ and $X$,
one often does not have the full information about the parameters of this
distribution (typically its mean and variance). In the same manner, one
might know the nature of the measurement noise, but without a precise
knowledge of its amplitude. 
In this subsection we remark that learning of the parameters can
be rather straightforwardly included in the present algorithmic
approach. However, its detail implementation and analysis, as well as the analysis of the deterioration of performance when the
prior distributions are not known, is left for future work. 

In order to learn the hyper-parameters, a common technique in
statistical inference is the expectation maximization
\cite{Dempster}, where one updates iteratively the hyper-parameters in
order to maximize the posterior likelihood, i.e. the normalization
${\cal Z}(Y)$ in
the posterior probability measure. In the context of approximate
message passing the expectation maximization has been derived and
implemented e.g. in \cite{KrzakalaMezard12,VilaSchniter11}. It turns
out that the expectation maximization update of hyper-parameters is
analogous to the update where one imposes the Nishimori
identities as was done for compressed sensing in
\cite{KrzakalaMezard12}. 
In other words one updates the hyper-parameters in such a
way that the Nishimori identities are satisfied after every 
iteration. For instance, the mean and variance of the distribution $P_X$
is updated to correspond to the empirical mean and variance of the estimators
of elements $X_{il}$. The variance of the noise in a Gaussian output
channel is updated to be the same as the squared difference between the
observed matrix $Y$ and the estimator of the product $FX$.
Generically, for mismatched priors the Nishimori identities are not
satisfied. At the same time the various hyper-parameters appear
explicitly in these identities. Therefore, one way of performing
learning of parameters is iteratively setting new values of the
hyper-parameters in order to satisfy the Nishimori identities. This can be straightforwardly implemented within the GAMP code.   In
compressed sensing following this strategy is equivalent to an
expectation maximization learning algorithm \cite{KrzakalaMezard12}.

It is interesting to note that, as the Bayes-optimal setting has
several nice properties in terms of simplicity of the analytical
approach and in terms of convergence of the message-passing algorithms, bringing the
iterations back on the Nishimori line by doing expectation
maximization improves quite generically the convergence of the
algorithm. In our opinion this is one of the properties observed in
\cite{parker2013bilinear,parker2014bilinear}.

\section{Approximate message passing for matrix factorization}
\label{sec:AMP}

\subsection{Approximate belief propagation for matrix factorization}
\label{sec:mes_pas}

Bayes inference amounts to the computation of marginals of the
posterior probability (\ref{eq:post}). In order to make it computationally tractable we have to resort to
approximations. In compressed sensing, the Bayesian approach combined
with a belief propagation (BP)
reconstruction algorithm leads to the so-called approximate message
passing (AMP) algorithm. It was first derived in \cite{DonohoMaleki09}
for the minimization of $\ell_1$, and subsequently generalized in
\cite{DonohoMaleki10,Rangan10b}. We shall now adapt
the same strategy to the case of matrix factorization. 

\begin{figure}[!ht]
\centering
\includegraphics[width=4in]{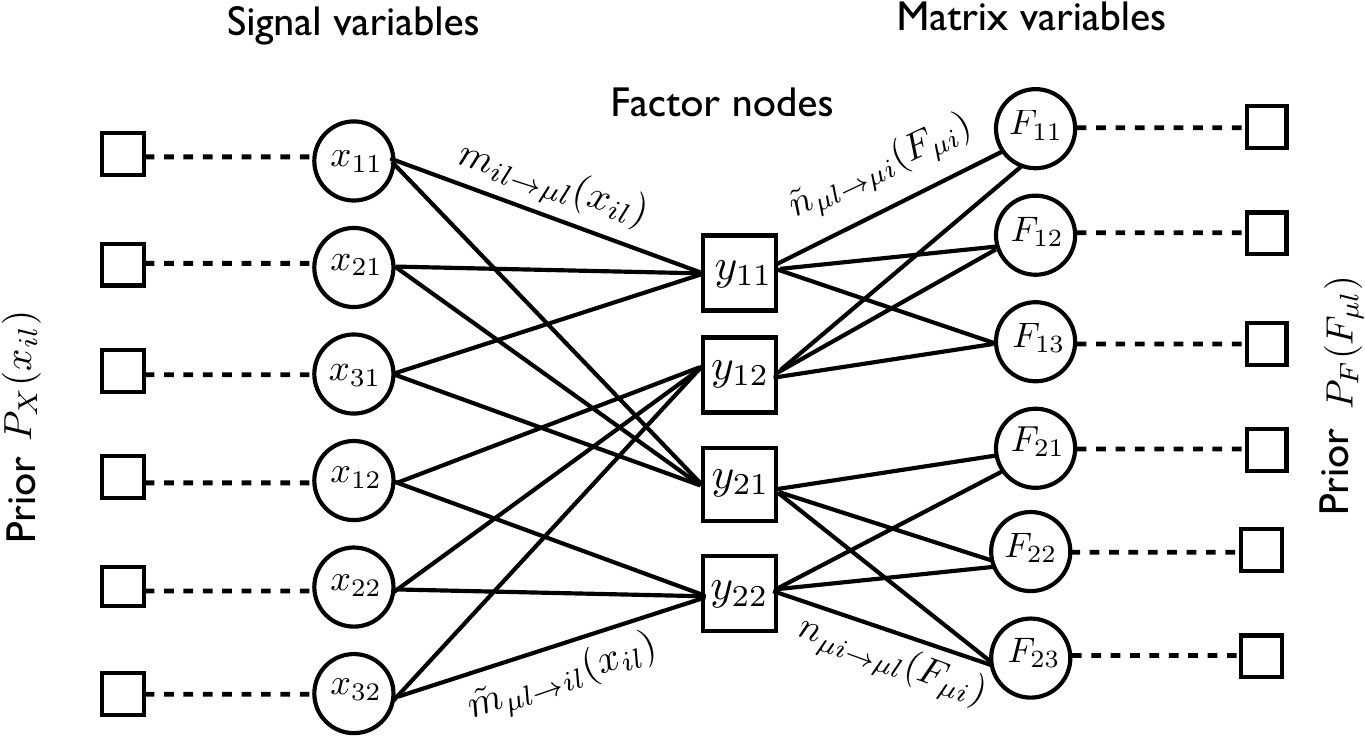}
\caption{Factor graph used for the belief propagation inference, here
  drawn using $N=3$, $P=2$ and $M=2$. The factor nodes are associated
  to the probability $P_{\rm out}(y_{\mu l\!}| \!\sum_{i} F_{\mu i}
  X_{il})$.}
\label{FactorGraph}
\end{figure}

The factor graph corresponding to the posterior
probability (\ref{eq:post}) is depicted in Fig.~\ref{FactorGraph}. The
canonical BP iterative equations \cite{KschischangFrey01} are written using
messages $m_{il\to\mu l}(X_{il})$, $n_{\mu i \to\mu l}(F_{\mu i})$ from variables to factors, and using messages ${\tilde
m}_{\mu l \to i l}(X_{il})$, ${\tilde n}_{\mu l \to \mu i}(F_{\mu i})$ from factors to
variables. On tree graphical models the messages are defined as
marginal probabilities of their arguments conditioned to the fact that
the variable/factor to which the message is incoming is not present in
the graph. The following BP
equations provide the exact values for these
conditional marginals on trees 
\bea m_{il\to\mu l} (t+1,X_{il})
&=& \frac{1}{{\cal Z}_{il\to\mu l}} P_X(X_{il}) \prod_{\nu (\neq \mu)}^M
\tilde{m}_{\nu l \to  i l} (t,X_{il}) \, , \label{message_m}\\
n_{\mu i \to\mu l} (t+1,F_{\mu i}) &=&\frac{1}{{\cal Z}_{\mu i \to\mu l}}  P_F(F_{\mu i}) \prod_{n (\neq
  l)}^P
\tilde{n}_{\mu n  \to  \mu i} (t,F_{\mu i }) \, ,\label{message_n}\\
\tilde{m}_{\mu l \to i l} (t,X_{il}) &=& \frac{1}{{\cal Z}_{\mu l \to i
    l}}    \int \prod_{j(\neq i)}^N
{\rm d}X_{jl} \prod_{k}^N dF_{\mu k}  \,    P_{\rm out}(y_{\mu l}| \sum_{k}^N
F_{\mu k} X_{kl}  )  \prod_k^N n_{\mu k \to \mu l} (t,F_{\mu k}) \prod_{j (\neq i)}^N m_{jl \to
  \mu l} (t,X_{jl}) \, , \label{message_tm}\\
\tilde{n}_{\mu l \to \mu i} (t,F_{\mu i}) &=&  \frac{1}{{\cal Z}_{\mu l
    \to \mu i
    }}  \int \prod_{j}^N {\rm d}X_{jl} \prod_{k
  (\neq i)}^N  dF_{\mu k} \, 
 P_{\rm out}(y_{\mu l}| \sum_{k}^N
F_{\mu k} X_{kl}  )  \prod_{k(\neq i)}^N n_{\mu k \to \mu l} (t,F_{\mu k}) \prod_{j}^N m_{jl \to
  \mu l} (t,X_{jl}) \, ,\label{message_tn}
\eea
where ${\cal Z}_{il\to\mu l}$, ${\cal Z}_{\mu i \to\mu l}$, ${\cal
  Z}_{\mu l \to i l}$, ${\cal Z}_{\mu l \to \mu i}$ are normalization
constants ensuring that all the messages are probability
distributions, $t\in {\mathbb N}$ is denoting the iteration
time-step, and the notation $\prod_{\nu (\neq \mu)}^M$ means a product
over all integer values of $\nu$ in $\{1,\dots,M\}$, except the value $\mu$.

Of course, the factor graph of matrix factorization, shown in
Fig.~\ref{FactorGraph}, is extremely far from a tree. The above
BP equations can, however, still be asymptotically exact if the
dependence between the incoming messages is
negligible in the leading order in $N$. This indeed happens in compressed
sensing (where the matrix $F$ is a known matrix, generated randomly with zero mean), as follows from the rigorously proven success of approximate
message passing \cite{DonohoMaleki09,BayatiMontanari10}. In the
present case of matrix factorization, we do not have any rigorous proof yet, but
based on our experience from studies of mean field spin glass systems \cite{MezardParisi87b,MezardMontanari09},
we conjecture that the fixed points of the above belief propagation
equations describe asymptotically exactly (in the same sense as for
compressed sensing) the performance of
Bayes-optimal inference. Hence the analysis of the fixed points of the
above equations leads to the understanding of information-theoretic
limitations for matrix factorization. The associated phase transitions
describe possible algorithmic barriers. This analysis is the main goal
and result of the present paper. 

The above BP iterative equations are written for probability
distributions over real values variables and the $2N-1$-uple integrals from the
r.h.s. are mathematically intractable in this form. We now define means and
variances of the variable-to-factor messages as 
\bea
  {\hat{x}}_{il \to \mu l}(t) &=& \int {\rm d}X_{il} \, m_{il \to \mu l}
  (t,X_{il})\, 
X_{il} \, ,\label{eq_def_fa} \\
c_{il\to \mu l}(t) + {\hat x}^2_{il\to \mu l}(t)&=& \int {\rm d}X_{il} \,
m_{il \to \mu l} (t,X_{il}) \, X^2_{il} \, ,\label{eq_def_fc}\\
{\hat{f}}_{\mu i \to \mu l}(t) &=& \sqrt{N} \int {\rm d}F_{\mu i} \, n_{\mu i\to \mu l}
(t,F_{\mu i})\,  F_{\mu i}\, , \label{eq_def_fr}\\
s_{\mu i\to \mu l}(t) + {\hat f}^2_{\mu i\to \mu l}(t)&=& N \int {\rm d}F_{il} \, n_{\mu i\to
  \mu l} (t,F_{\mu i}) \, F^2_{\mu i} \, . \label{eq_def_fs}
\eea 
Notice that the factors $\sqrt{N}$ in the definition of $r$, and $N$
in the definition of $s$, have been introduced in order to ensure that
all the messages $a,c,r,s$ are of order $O(1)$ in the thermodynamic
limit.

Using this scaling, we shall now show that the BP
equations can be simplified in the thermodynamic limit, and that they
can actually be written as a closed set of equations involving only
messages $a,c,r,s$.  Our general aim is to design an algorithm which in some region
of parameters will asymptotically match the performance of the
exact (computationally intractable) Bayes-optimal inference. Belief propagation provides
such an algorithm, but in order to make it computationally efficient, writing it in terms of the messages $a,c,r,s$ is crucial,
provided one is careful not to loose any terms in the asymptotic
analysis of the thermodynamic limit to 
leading order.

Let us define the Fourrier transform of the output function 
\be
         \hat{P}_{\rm out}(y,\xi) = \frac{1}{\sqrt{2\pi}}
         \int_{-\infty}^{\infty}   P_{\rm out}(y|z)  e^{-i \xi  z}  {\rm
           d} z \, ,
\ee 
to rewrite the update equation for message $\tilde{m}_{\mu l \to i l}
(t,X_{il})$ as 
\bea
     \tilde{m}_{\mu l \to i l} (t,X_{il}) &=& \frac{1}{\sqrt{2\pi}{\cal Z}_{\mu l \to i
    l}}  \int {\rm d}\xi  \,  \hat{P}_{\rm out}(y_{\mu l}, \xi )  \int
{\rm d} F_{\mu i}  \,  e^{i \xi 
F_{\mu i} X_{il}  }   n_{\mu i \to \mu l} (t,F_{\mu i})  \nonumber \\ && \prod_{j
(\neq i)}^N \left[   \int  {\rm d}X_{jl}  {\rm d}F_{\mu j} \,  e^{i \xi 
F_{\mu j} X_{jl}}  n_{\mu j \to \mu l} (t,F_{\mu j}) m_{jl \to
  \mu l} (t,X_{jl}) \right]\, .
\eea
In order to perform the integral in the square-bracket we recall that
the elements of matrix $F=O(1/\sqrt{N})$ and hence we can expand the
exponential to second order, use definitions
(\ref{eq_def_a}-\ref{eq_def_s}) and re-exponentiate the result without
loosing any leading order terms in $\tilde{m}_{\mu l \to i l}
(t,X_{il})$. The whole square bracket then becomes 
\be
      \exp\left\{ i\frac{\xi}{\sqrt{N}}  {\hat{f}}_{\mu j \to \mu l}(t) {\hat{x}}_{jl \to \mu l}(t) -
        \frac{\xi^2}{2 N} \left[  s_{\mu j \to \mu l}(t) c_{jl \to \mu l}(t)
        + s_{\mu j \to \mu l}(t) {\hat x}^2_{jl \to \mu l}(t)+ {\hat f}^2_{\mu j \to
          \mu l}(t) c_{jl \to \mu l}(t) 
  \right] \right\}  \, .
\ee
Next we perform the integral over variable $\xi$ which is simply a
Gaussian integral. This gives for the message 
\be
 \tilde{m}_{\mu l \to i l} (t,X_{il})  =  \frac{1}{{\cal Z}_{\mu l \to
     i l}} \int  {\rm d} F_{\mu i} \, n_{\mu i \to \mu l} (t,F_{\mu i})
 \frac{1}{\sqrt{2\pi V^t_{\mu i l}}} \int  {\rm d}z \, P_{\rm out}(y_{\mu
 l} | z)  e^{- \frac{(z- F_{\mu i} X_{il} - \omega^t_{\mu i l} )^2}{2
    V^t_{\mu i l}  }} \, , \label{eq:tildem_aux}
\ee
where we introduced auxiliary variables that are both of order $O(1)$
\bea
       V^t_{\mu i l} &\equiv&\frac{1}{N}\sum_{j (\neq i)}^N   \left[  s_{\mu j \to \mu l}(t) c_{jl \to \mu l}(t)
        + s_{\mu j \to \mu l}(t) {\hat x}^2_{jl \to \mu l}(t)+ {\hat f}^2_{\mu j \to
          \mu l}(t) c_{jl \to \mu l}(t)   \right] \, , \label{eq:def_Vmuil}\\
      \omega^t_{\mu i l}  &\equiv&\frac{1}{\sqrt{N}}\sum_{j (\neq i)}^N  {\hat{f}}_{\mu j \to \mu l}(t)
      {\hat{x}}_{jl \to \mu l}(t) \, . \label{eq:def_omuil}
\eea
The last integral to be performed in (\ref{eq:tildem_aux}) is the one
over the matrix element $F_{\mu i}$. Using again the fact that
$F_{\mu i} = O(1/\sqrt{N})$, we expand the exponential in which
$F_{\mu i} $ appears to second order and perform the integration to obtain 
\bea
     \tilde{m}_{\mu l \to i l} (t,X_{il})  &=&    \frac{1}{{\cal Z}_{\mu l \to
     i l}} 
 \frac{1}{\sqrt{2\pi V^t_{\mu i l}}} \int  {\rm d}z \, P_{\rm out}(y_{\mu
 l }| z)  e^{- \frac{(z- \omega^t_{\mu i l} )^2}{2
     V^t_{\mu i l}  }} \nonumber \\ && \left\{   1+ \frac{z- \omega^t_{\mu i l}}{\sqrt{N}V^t_{\mu i
       l} } {\hat{f}}_{\mu i \to \mu l}(t) X_{il}   - \frac{X_{il}^2}{2N} \left[ \frac{1}{V^t_{\mu i
       l}}  - \frac{ (z- \omega^t_{\mu i l })^2  }{(V^t_{\mu i
       l})^2}  \right]    \left[ {\hat f}^2_{\mu i \to \mu l}(t) + s_{\mu
 i \to \mu l}(t)  \right]  \right\}  \, .\label{eq:m_again}
\eea
Following the notation of \cite{Rangan10b} we now define the
output-function as  
\be
    g_{\rm out} (\omega, y , V) \equiv \frac{ \int {\rm d}z P_{\rm
        out}(y|z)\,  (z-\omega) \,   e^{-\frac{(z-\omega)^2}{2V}}  }{  V \int {\rm d}z P_{\rm
        out}(y|z) e^{-\frac{(z-\omega)^2}{2V}}  } \, . \label{eq:def_gout}
\ee
The following useful identity 
holds for the average of
$(z-\omega)^2/V^2$ in the above measure 
\be
  \frac{ \int {\rm d}z P_{\rm
        out}(y|z) \, (z-\omega)^2\,   e^{-\frac{(z-\omega)^2}{2V}}  }{ V^2  \int {\rm d}z P_{\rm
        out}(y|z) e^{-\frac{(z-\omega)^2}{2V}}  }=
        \frac{1}{V} + \partial_\omega  g_{\rm out} (\omega, y , V)  +
        g^2_{\rm out} (\omega, y , V) \, . \label{eq:id_gout}
\ee 
With definition (\ref{eq:def_gout}), and re-exponentiating the $X_{il}$-dependent
terms in (\ref{eq:m_again}) while keeping all the leading order terms, we obtain finally
that $\tilde{m}_{\mu l \to i l} (t,X_{il})$ is a Gaussian probability
distribution 
\be
        \tilde{m}_{\mu l \to i l} (t,X_{il}) = \sqrt{ \frac{  A^t_{\mu
              l \to i l }   }{2\pi N} }  e^{ -\frac {X_{il}^2}{2N}
  A^t_{\mu l \to i l} +  B^t_{\mu l \to i l} \frac{X_{il}}{\sqrt{N}} - \frac{(B^t_{\mu l \to
      i l } )^2}{2  A^t_{\mu l \to i l } } } \label{eq_def_AB}\\
\ee
with
\bea
           B^t_{\mu l \to i l } &=&  g_{\rm out} (\omega^t_{\mu i l}, y_{\mu
             l} , V^t_{\mu i l}) \,  {\hat{f}}_{\mu i \to \mu l}(t) \, ,\label{eq:B} \\
           A^t_{\mu l \to i l } &=&  - \partial_{\omega}  g_{\rm
             out} (\omega^t_{\mu i l}, y_{\mu l} , V^t_{\mu i l}) \,
           \left[  {\hat f}^2_{\mu i \to \mu l}(t) + s_{\mu i \to \mu l}(t)
           \right] - g^2_{\rm out} (\omega^t_{\mu i l}, y_{\mu l} ,
           V^t_{\mu i  l})  \,  s_{\mu i \to \mu l}(t) \, . \label{eq:A}
\eea
In a completely analogous way we obtain that the message
$\tilde{n}_{\mu l \to \mu i} (t,F_{\mu i})$ is also a Gaussian
distribution 
\be
\tilde{n}_{\mu l \to \mu i} (t,F_{\mu i}) =\sqrt{ \frac{  S^t_{\mu l
      \to \mu i  }   }{2\pi} }  e^{ -\frac {F_{\mu i}^2}2
  S^t_{\mu l \to \mu i} +  R^t_{\mu l \to \mu i} F_{\mu i} - \frac{(R^t_{\mu l \to \mu i})^2}{2  S^t_{\mu l \to \mu i}} }\label{eq_def_RS}
\ee
with 
\bea
        R^t_{\mu l \to \mu i} &=&g_{\rm out} (\omega^t_{\mu i l}, y_{\mu
             l} , V^t_{\mu i l}) \,   {\hat{x}}_{il \to \mu l}(t) \, ,\label{eq:R}\\
       S^t_{\mu l \to \mu i} &=&  - \partial_{\omega}  g_{\rm
             out} (\omega^t_{\mu i l}, y_{\mu l} , V^t_{\mu i l}) \,  \left[c_{i l\to \mu l}(t) +
      {\hat x}^2_{i l \to \mu l}(t)\right] -  g^2_{\rm out} (\omega^t_{\mu i l}, y_{\mu l} ,
           V^t_{\mu i  l})  \, c_{i l \to \mu l}(t) \, .\label{eq:S}
\eea

At this point we follow closely the derivation of AMP from
\cite{KrzakalaMezard12} and define the probability distributions
\bea
         {\cal M}_X(\Sigma,T,X) =  \frac{1}{\hat Z_X(\Sigma,T)}
         P_X(X) \frac{1}{\sqrt{2\pi \Sigma}}
         e^{-\frac{(X-T)^2}{2\Sigma}} \, , \label{eq:Mx}\\
  {\cal M}_F(Z,W,F) =  \frac{1}{\hat Z_F(Z,W)} P_F(F)
  \frac{1}{\sqrt{2\pi Z}} e^{-\frac{(\sqrt{N} F-W)^2}{2Z}} \, , \label{eq:MF}
\eea
where $\hat Z_X(\Sigma,T)$, and  $\hat Z_F(Z,W)$ are normalizations.
We define the average and
variance of ${\cal M}_X$ and ${\cal M}_F$ as
\bea
    f_X(\Sigma,T) &\equiv& \int {\rm d}X\,  X \,  {\cal
      M}_X(\Sigma,T,X)\, , \quad \quad
    f_c(\Sigma,T) \equiv \int {\rm d}X\,  X^2 \,  {\cal M}_X(\Sigma,T,X) -
    f^2_a(\Sigma,T) \, , \label{f_ac_gen} \\
   f_F(Z,W) &\equiv& \sqrt{N} \int {\rm d}F\,  F \,  {\cal
     M}_F(Z,W, F)\, ,\quad \quad 
    f_s(Z,W) \equiv  N \int {\rm d}F\,  F^2 \,  {\cal
      M}_F(Z,W,F) -
    f^2_r(Z,W) \, .  \label{f_rs_gen}
\eea
These are the input auxiliary function of \cite{Rangan10b}.
It is instrumental to notice that
\be
      f_X (\Sigma,T) = T  + \Sigma \frac{{\rm d}}{{\rm d} T}
      \log{\hat Z_X(\Sigma,T) }  \, , \quad \quad 
      f_c (\Sigma,T) = \Sigma  \frac{{\rm d}}{{\rm d} T}   f_X
      (\Sigma,T) \, . \label{eq:deriv_fa}
\ee
and analogously for $f_F$ and $f_s$
\be
     f_F (Z,W) = W  + Z \frac{{\rm d}}{{\rm d} W}
      \log{\hat Z_F(Z,W) }  \, , \quad \quad 
      f_s (Z,W) = Z  \frac{{\rm d}}{{\rm d} W}   f_F
      (Z,W) \, . \label{eq:deriv_fr}
\ee
With these definition we obtain from (\ref{message_m},\ref{message_n}) using
(\ref{eq_def_AB},\ref{eq_def_RS}) that 
\bea
{\hat{x}}_{i l\to \mu l}(t+1)&=&f_X\left(\frac{N}{\sum_{\nu(\neq \mu)}A^t_{\nu l \to
    i l }},\frac{\sqrt{N}\sum_{\nu(\neq \mu)}B^t_{\nu l \to i l}}{\sum_{\nu(\neq \mu)}A^t_{\nu l \to
    i l }} \right)\, , 
\label{BP_a_closed}  \\
c_{i l\to \mu l}(t+1)&=&f_c\left(\frac{N}{\sum_{\nu (\neq \mu)}A^t_{\nu l \to
    i l}},\frac{\sqrt{N}\sum_{\nu(\neq \mu)}B^t_{\nu l \to i l}}{\sum_{\nu(\neq \mu)}A^t_{\nu l \to
    i l }}\right) \, , 
\label{BP_v_closed} \\ 
{\hat{f}}_{\mu i\to \mu l}(t+1)&=& f_F\left(\frac{N}{(\sum_{n(\neq l)}S^t_{\mu n \to
   \mu i}}, \frac{\sqrt{N}\sum_{n(\neq l)}R^t_{\mu n \to
   \mu i}}{\sum_{n(\neq l)}S^t_{\mu n \to
   \mu i}}\right)\, , \label{BP_r_closed} \\ 
s_{\mu i\to \mu l}(t+1)&=&   f_s\left(\frac{N}{\sum_{n(\neq l)}S^t_{\mu n \to
   \mu i}}, \frac{\sqrt{N}\sum_{n(\neq l)}R^t_{\mu n \to
   \mu i}}{\sum_{n(\neq l)}S^t_{\mu n \to
   \mu i}}\right) \, ,   \label{BP_s_closed}
\eea
It is clear from the above expressions that all the messages $a,c,r,s$ and
$A,C,R,S$ scale as $O(1)$ in the thermodynamic limit. For instance, as $A$
are positive, the quantity $\sum_{\nu(\neq \mu)}A^t_{\nu l \to
    i l }$ is $O(1)$. 
On the other hand, the message ${\hat{f}}_{\mu i \to \mu
    l}(t) $ is an estimate of $ \sqrt{N} F_{\mu i}$; this estimate is
  $O(1)$, but a sum like $(1/\sqrt{N}) \sum_{\nu(\neq \mu)} {\hat{f}}_{\nu  i \to \nu
    l}(t) $ is an estimate of $ \sum_\nu F_{\nu i}$. As $P_F$ has 
  mean variance of order $O(1/N)$, this sum is actually of
  $O(1)$. The same argument suggests that $(1/\sqrt{N})\sum_{\nu(\neq
    \mu)}B^t_{\nu l \to i l}$ is $O(1)$.
Recalling eqs. (\ref{eq:B},\ref{eq:A}) and (\ref{eq:R},\ref{eq:S}), we
have derived that in the thermodynamic limit  the general belief propagation
equations simplify into a closed set of equations in the messages
which are the means and variances $a,r,c,s$ defined in
(\ref{eq_def_fa}-\ref{eq_def_fs}). To iterate this message passing
algorithm we initialize as
\bea
{\hat{x}}_{il\to \mu l}(0) &=& \int {\rm d}X \, X P^{il}_X(X) \, , \\
c_{il\to \mu l}(0) &=& \int {\rm d}X \, X^2 P^{il}_X(X) - {\hat x}^2_{il\to \mu l}(0)  \, , \\
{\hat{f}}_{\mu i\to \mu l}(0) &=& \sqrt{N} \int {\rm d}F \, F P^{\mu l}_F(F) \, , \\
s_{\mu i\to \mu l}(0) &=& N \int {\rm d}F \, F^2 P^{\mu l}_F(F) -
{\hat f}^2_{\mu i\to \mu l}(0)\, , 
\eea
then we compute $V^t_{\mu i l}$ and $\omega^t_{\mu il}$ from
(\ref{eq:def_Vmuil}-\ref{eq:def_omuil}), then we compute $B^t$, $A^t$,
$R^t$ and $S^t$ according to (\ref{eq:B}-\ref{eq:A}) and (\ref{eq:R}-\ref{eq:S}) using
definition of $g_{\rm out}$ (\ref{eq:def_gout}). Finally we update the messages
according to (\ref{BP_a_closed}-\ref{BP_s_closed}) and iterate. Notice, however, that we work with
$O(N^3)$ messages, each of them takes $N$ steps to update, and hence the computational complexity of this
algorithm is relatively high. In the next section we will write a
simplification that reduces this complexity.

From the fixed point of the belief propagation equations one can also
compute the approximated marginal probabilities of the posterior,
defined as
\bea 
m_{il} (t+1,X_{il})
&=& \frac{1}{{\cal Z}_{il}} P_X(X_{il}) \prod_{\nu}^M
\tilde{m}_{\nu l \to  i l} (t,X_{il}) \, , \label{full_m}\\
n_{\mu i} (t+1,F_{\mu i}) &=&\frac{1}{{\cal Z}_{\mu i}}  P_F(F_{\mu i}) \prod_{n}^P
\tilde{n}_{\mu n  \to  \mu i} (t,F_{\mu i }) \, ,\label{full_n}
\eea 
One again defines the mean and variance of these two messages,
${\hat{x}}_{il}(t+1)$, $c_{il}(t+1)$, ${\hat{f}}_{\mu i}(t+1)$ and $s_{\mu i}(t+1)$
analogously to (\ref{eq_def_a}-\ref{eq_def_s}):
\bea
  {\hat{x}}_{il} (t+1) &=& \int {\rm d}X_{il} \, m_{il}
  (t+1,X_{il})\, 
X_{il} \, ,\label{eq_def_a} \\
c_{il}(t+1) + {\hat x}^2_{il}(t+1)&=& \int {\rm d}X_{il} \,
m_{il} (t+1,X_{il}) \, X^2_{il}\, ,\label{eq_def_c}\\
{\hat{f}}_{\mu i }(t+1) &=& \sqrt{N}\int {\rm d}F_{\mu i} \, n_{\mu i}
(t+1,F_{\mu i})\,  F_{\mu i} \, ,\label{eq_def_r}\\
s_{\mu i}(t+1) + {\hat f}^2_{\mu i}(t+1)&=& N\int {\rm d}F_{il} \, n_{\mu i}
(t+1,F_{\mu i}) \, F^2_{\mu i} \, . \label{eq_def_s}
\eea 
Those quantities are
then expressed as 
\bea
{\hat{x}}_{i l}(t+1)&=&f_X\left(\frac{N}{\sum_{\nu}A^t_{\nu l \to
    i l }},\frac{\sqrt{N}\sum_{\nu}B^t_{\nu l \to i l}}{\sum_{\nu}A^t_{\nu l \to
    i l }} \right)\, , 
\label{BP_a_full} \\ 
c_{i l}(t+1)&=&f_c\left(\frac{N}{\sum_{\nu}A^t_{\nu l \to
    i l}},\frac{\sqrt{N}\sum_{\nu}B^t_{\nu l \to i l}}{\sum_{\nu}A^t_{\nu l \to
    i l }}\right) \, , 
\label{BP_c_full} \\ 
{\hat{f}}_{\mu i}(t+1)&=& f_F\left(\frac{N}{\sum_{n}S^t_{\mu n \to
   \mu i}}, \frac{\sqrt{N}\sum_{n}R^t_{\mu n \to
   \mu i}}{\sum_{n}S^t_{\mu n \to
   \mu i}}\right)\,  ,\\
 \label{BP_r_full}
s_{\mu i}(t+1)&=& f_s\left(\frac{N}{\sum_{n}S^t_{\mu n \to
   \mu i}}, \frac{\sqrt{N}\sum_{n}R^t_{\mu n \to
   \mu i}}{\sum_{n}S^t_{\mu n \to
   \mu i}}\right) \, .   \label{BP_s_full}
\eea

\subsection{GAMP for matrix factorization}
\label{Sec:GAMP}

The message-passing form the AMP algorithm for matrix factorization
derived in the previous section uses $2 N M P$ messages, one between each variable
component $(il)$ and $(\mu i)$ and each measurement $(\mu l)$, in each
iteration. In fact, exploiting again the simplifications which take
place in the thermodynamic limit,  always within
the assumption that the elements of the matrix $F$ scale as
$O(1/\sqrt{N})$, it is possible to rewrite  and close the BP
equations in terms of only $2N(P+M)$ messages.  In
statistical physics terms, the resulting equations correspond to the
Thouless-Anderson-Palmer equations (TAP) \cite{ThoulessAnderson77}
used in the study of
spin glasses. In
the thermodynamic limit, these are asymptotically equivalent to the BP equations.  Going from BP to TAP is, in the compressed
sensing literature, the step to go from the rBP
\cite{Rangan10} to the AMP \cite{DonohoMaleki09} algorithm. Let us now
show how to take this step for the present problem of matrix factorization.

In the thermodynamic limit, it is clear from (\ref{BP_a_closed}-\ref{BP_s_closed})
that the messages ${\hat{x}}_{il\to \mu l}$, $v_{il\to \mu l}$ and ${\hat{f}}_{\mu i
  \to \mu l}$, $s_{\mu i \to \mu l}$ are nearly independent of $\mu
l$. For instance in the equation giving ${\hat{x}}_{il\to \mu l}$, the only
dependence on $\mu$ is through the fact that the sum over $\nu$
avoids the value $\nu=\mu$. But this is one term in $M$, and therefore
one might expect that this term is negligible.
 However, one must be careful when these small terms are summed over
 and their sum might be of the leading order in $N$. Such terms are called
 in spin-glass theory the ``Onsager
reaction terms''. In the following we derive these Onsager terms. 

 Let us define the following variables all of order $O(1)$ on which we
will close the equations
\bea
      T^t_{il} &=& \frac{\sqrt{N}\sum_{\nu} B^t_{\nu l \to i l }}{\sum_{\nu}
        A^t_{\nu l \to i l }}  \, , \quad \quad 
     \Sigma^t_{il} =  \frac{N}{\sum_{\nu} A^t_{\nu l \to i l }}  \, , \label{eq:def_Sigma} \\
     W^t_{\mu i} &=& \frac{\sqrt{N}\sum_{n} R^t_{\mu n \to \mu i  }}{\sum_{n}
       S^t_{\mu n \to \mu i   }}\, , \quad \quad
     Z^t_{\mu i} = \frac{N}{\sum_{n} S^t_{\mu n \to \mu i  }}  \, , \label{eq:def_WZ} \\
    V^t_{\mu l} &=& \frac{1}{N}\sum_j [c_{jl\to \mu l}(t) s_{\mu j \to \mu l}(t)  +
    c_{jl\to \mu l}(t) {\hat f}^2_{\mu j \to \mu l}(t)   +  {\hat x}^2_{jl\to \mu l}(t)
    s_{\mu j \to \mu l}(t)  ] \, , \label{eq:def_V} \\
   \omega^t_{\mu l} &=& \frac{1}{\sqrt{N}}\sum_j {\hat{x}}_{j l\to \mu l}(t) {\hat{f}}_{\mu j \to \mu l}(t) \, . \label{eq:def_o} 
\eea
To keep track of all the Onsager terms that will influence the leading
order of the final equations we notice that
\bea
{\hat{x}}_{il\to \mu l}(t+1)&=& f_X\left(\frac{N}{\sum_{\nu}A^t_{\nu l\to i l}
    - A^t_{\mu l\to i l}},\frac{ \sqrt{N}\sum_{\nu}B^t_{\nu l\to i l} -\sqrt{N} B^t_{\mu l\to
      i l}}{ \sum_{\nu}A^t_{\nu l\to i l} -A^t_{\mu l\to i
      l} }\right)\, ,\nonumber \\
&=&
{\hat{x}}_{il}(t+1) -\frac{1}{\sqrt{N}} B^t_{\mu l\to i l}  \Sigma^t_{il} \frac{\partial
  f_X}{\partial T}\left(\Sigma^t_{il},T^t_{il}\right) +
O\left(1/N\right)\, , \nonumber \\
&=&{\hat{x}}_{il}(t+1) - \frac{1}{\sqrt{N}} g_{\rm out}(\omega^t_{\mu l}, y_{\mu l}, V^t_{\mu l})
{\hat{f}}_{\mu i}(t) c_{il}(t+1) +
O\left(1/N\right)\, . 
\eea
Similarly:
\bea
{\hat{f}}_{\mu i\to \mu l}(t+1)&=&
{\hat{f}}_{\mu i}(t+1) -\frac{1}{\sqrt{N}} g_{\rm out}(\omega^t_{\mu l}, y_{\mu l}, V^t_{\mu l})
{\hat{x}}_{i l}(t) s_{\mu i}(t+1) +
O\left(1/N\right)\,  , \\
     c_{il \to \mu l}(t+1) &=& c_{il}(t+1) +
     O\left(1/\sqrt{N}\right)\, , \quad \quad  s_{\mu i\to \mu
       l}(t+1)= s_{\mu i}(t+1)  + O\left(1/\sqrt{N} \right)\,
     , \\
   g_{\rm out}(\omega^t_{\mu i l},y_{\mu l}, V^t_{\mu i l}) &=&
   g_{\rm out}(\omega^t_{\mu l},y_{\mu l}, V^t_{\mu l}) - \frac{1}{\sqrt{N}} {\hat{f}}_{\mu i}(t)
   {\hat{x}}_{il}(t) \partial_\omega g_{\rm out}(\omega^t_{\mu l},y_{\mu l},
   V^t_{\mu l}) + O(1/N) \, , 
\eea
From these expansions we obtain the GAMP algorithm for matrix factorization 
\bea
  V^t_{\mu l} &=& \frac{1}{N}\sum_j [c_{jl}(t) s_{\mu j}(t)  +
    c_{jl}(t) {\hat f}^2_{\mu j}(t)   +  {\hat x}^2_{jl}(t)
    s_{\mu j}(t)  ] \, , \label{gamp_V}\\
   \omega^t_{\mu l} &=& \frac{1}{\sqrt{N}} \sum_j {\hat{x}}_{j l}(t) {\hat{f}}_{\mu j}(t) -  g_{\rm
     out}(\omega^{t-1}_{\mu l}, y_{\mu l}, V^{t-1}_{\mu l})  \frac{1}{N}\sum_j \left[
     {\hat{f}}_{\mu j}(t) {\hat{f}}_{\mu j}(t-1) c_{jl}(t)  + {\hat{x}}_{jl}(t)  {\hat{x}}_{jl}(t-1)
     s_{\mu j}(t)  \right]   \, ,  \label{gamp_omega}\\
   ( \Sigma^t_{il} )^{-1}&=& \frac{1}{N}\sum_{\mu} \left\{ - \partial_{\omega}  g_{\rm
             out} (\omega^t_{\mu l}, y_{\mu l} , V^t_{\mu l}) \,
           \left[  {\hat f}^2_{\mu i}(t) + s_{\mu i}(t)
           \right] - g^2_{\rm out} (\omega^t_{\mu l}, y_{\mu l} ,
           V^t_{\mu  l})  \,  s_{\mu i}(t) \right\}\, ,  \label{gamp_Sigma}\\
     T^t_{il} &=&  \Sigma^t_{il}  \Big\{ \frac{1}{\sqrt{N}}  \sum_\mu  g_{\rm out} (\omega^t_{\mu l}, y_{\mu
             l} , V^t_{\mu l}) \,  {\hat{f}}_{\mu i}(t) - {\hat{x}}_{il}(t) \frac{1}{N}\sum_\mu
           {\hat f}^2_{\mu i}(t)  \partial_\omega g_{\rm out}(\omega^t_{\mu l},y_{\mu l},
   V^t_{\mu l})  \nonumber \\ &&- {\hat{x}}_{il}(t-1) \frac{1}{N}\sum_\mu s_{\mu i}(t)  g_{\rm
             out} (\omega^t_{\mu l}, y_{\mu l} , V^t_{\mu l})  g_{\rm
             out} (\omega^{t-1}_{\mu l}, y_{\mu l} , V^{t-1}_{\mu l})
           \Big\} \, ,\label{gamp_T}\\ 
  ( Z^t_{\mu i} )^{-1}&=& \frac{1}{N}\sum_{l} \left\{ - \partial_{\omega}  g_{\rm
             out} (\omega^t_{\mu l}, y_{\mu l} , V^t_{\mu l}) \,
           \left[  {\hat x}^2_{i l}(t) + c_{i l}(t)
           \right] - g^2_{\rm out} (\omega^t_{\mu l}, y_{\mu l} ,
           V^t_{\mu  l})  \,  c_{il}(t) \right\}\, , \label{gamp_Z}\\
     W^t_{\mu i} &=& Z^t_{il}  \Big\{ \frac{1}{\sqrt{N}}  \sum_l  g_{\rm out} (\omega^t_{\mu l}, y_{\mu
             l} , V^t_{\mu l}) \,  {\hat{x}}_{il}(t) - {\hat{f}}_{\mu i}(t)
           \frac{1}{N} \sum_l
           {\hat x}^2_{il}(t)  \partial_\omega g_{\rm out}(\omega^t_{\mu l},y_{\mu l},
   V^t_{\mu l})  \nonumber \\ &&- {\hat{f}}_{\mu i}(t-1) \frac{1}{N}\sum_l c_{i l}(t)  g_{\rm
             out} (\omega^t_{\mu l}, y_{\mu l} , V^t_{\mu l})  g_{\rm
             out} (\omega^{t-1}_{\mu l}, y_{\mu l} , V^{t-1}_{\mu l})
           \Big\} \, ,\label{gamp_W}\\ 
    {\hat{x}}_{il}(t+1) &=& f_X(\Sigma^t_{il},T^t_{il})\, , \quad \quad
    c_{il}(t+1) = f_c(\Sigma^t_{il},T^t_{il})\, , \label{gamp_ac} \\
    {\hat{f}}_{\mu i}(t+1) &=& f_F(Z^t_{\mu i},W^t_{\mu i})\, , \quad \quad
    s_{\mu i}(t+1) = f_s(Z^t_{\mu i},W^t_{i\mu }) \, .\label{gamp_rs}
\eea

The initial condition for iterations are 
\bea
{\hat{x}}_{il}(t=0) &=& \int {\rm d}X \, X P^{il}_X(X) \, , \quad \quad
c_{il}(t=0) = \int {\rm d}X \, X^2 P^{il}_X(X) - {\hat x}^2_{il\to \mu l}(t=0)  \, , \\
{\hat{f}}_{\mu i}(t=0) &=& \sqrt{N} \int {\rm d}F \, F P^{\mu i}_F(F) \, ,
\quad \quad
s_{\mu i}(t=0) = N \int {\rm d}F \, F^2 P^{\mu i}_F(F) -
{\hat f}^2_{\mu i}(t=0)\, .
\eea
In order to compute $\omega^{t=0}$, $T^{t=0}$ and $W^{t=0}$ use the
above equations as if  ${\hat{f}}_{\mu i}(-1)=0$ and ${\hat{x}}_{il}(-1)=0$. 

The interpretation of the terms in the GAMP for matrix factorization
is the following: $\omega_{\mu l}^t$ is the mean of the current estimate of
$z_{\mu l} = \sum_i F_{\mu i} X_{il}$ and $V^t_{\mu l}$ is the
variance of that estimate; $T_{il}$ and $\Sigma_{il}$ is the mean and
variance of the current estimate of $X_{il}$ without taking into
account the prior information of $X_{il}$; the parameters ${\hat{x}}_{il}$ and
$c_{il}$ are then the mean and variance of the current estimate of
$X_{il}$ with the prior information taken into account. Analogously
for $W_{\mu i}$ and $Z_{\mu i}$ being the mean and variance of the
estimate for $F_{\mu i}$ before the prior is taken into account, and
${\hat{f}}_{\mu i}$ with $s_{\mu i}$ are the mean and variance once the prior
information was accounted for.

A reader familiar with the AMP and GAMP algorithm for compressed
sensing \cite{DonohoMaleki09,Rangan10b,KrzakalaMezard12} will recognize that the above equations indeed reduce to the
compressed sensing GAMP of \cite{Rangan10b} when one sets ${\hat{f}}_{\mu
  i}(t)=\sqrt{N} F_{\mu i}$ and $s_{\mu i}=0$. 

The above algorithm is closely related to the BiG-AMP of
\cite{parker2013bilinear}. 
There are however three differences between our algorithm and
BiG-AMP:
\begin{enumerate}
\item
We find a  $s_{\mu i}$-dependent term in the expression (\ref{gamp_Sigma}) for
$\Sigma_{il}$ which is not present in BiG-AMP.
\item
Similarly, we find a 
$c_{il}$-dependent term in the expression (\ref{gamp_Z}) for
$Z_{\mu i}$  which is not present in BiG-AMP. 
\item
The time indices are
slightly different.
\end{enumerate}

Considering the last point, the fact of having 
different time indices during the iterations does not influence the
fixed points, in which we are mainly interested. However, the use of correct
time indices is crucial for the assumptions leading to the density
evolution of this algorithm (that we derive in section \ref{Sec:SE})
to hold. 

As for the missing  terms in  the BiG-AMP expressions of
\cite{parker2013bilinear}  for $\Sigma_{il}$ and $Z_{\mu i}$, they have a
more serious effect as they can change
the fixed point. To the best of our understanding, these terms have
been neglected in \cite{parker2013bilinear}, while they should be kept. It seems to us that  some of the leading order
terms are missing in eqs. (15-16) from  \cite{parker2013bilinear}.

\subsection{Simplifications due to the Nishimori identities}
\label{Sec:Nish}

In the previous section we derived the GAMP algorithm for matrix
factorization, eqs. (\ref{gamp_V}-\ref{gamp_rs}). This algorithm can
in principle be used for any set of matrices $F$ and $X$. If iterated
in the form derived in Section \ref{Sec:GAMP} it often shows problems
of convergence. 
There are ways to slightly improve the convergence of the above algorithm in a
wide range of applications by a
number of empirical methods suggested in \cite{parker2014bilinear}. 

We will focus on the particular case when matrices $X$ and
$F$ were indeed generated from the separable probability distributions
$P_F(F)$ and $P_X(X)$ described in eqs. (\ref{PF}-\ref{PX}), and the output $y$ was
generated by the assumed model. 
\be
   P_X(X) = P_{X^0}(X)\, , \quad \quad   P_F(F) = P_{F^0}(F)\, ,
   \quad \quad    P^0_{\rm out}(y|z) = P_{\rm out}(y|z)\, . \label{eq:P=P0}
\ee
In this case the belief
propagation is a proxy for the optimal Bayes inference algorithms and
a number of properties described in section \ref{Sec:OB} hold. In
analogy with fundamental works on spin glasses \cite{NishimoriBook,Iba99} we called
these properties the Nishimori identities. The setting where
conditions (\ref{eq:P=P0}) hold will be called the {\it Bayes-optimal setting}. 

The Nishimori identities hold and the system is {\it on the Nishimori
  line} when one is using the correct priors
on $F$ and $X$ and the right output channel in the reconstruction
process, i.e. when conditions (\ref{eq:P=P0}) hold.
In the limit $N\to \infty$ and thanks to self-averaging we then have on the Nishimori line at every
iteration step $t$ 
\be
     \frac{1}{PM} \sum_{\mu, l}  \frac{ \int {\rm d}z P_{\rm
        out}(y_{\mu l}|z) (z-\omega_{\mu l}^t)^2  e^{-\frac{(z-\omega_{\mu l}^t)^2}{2V_{\mu l}^t}}  }{   \int {\rm d}z P_{\rm
        out}(y_{\mu l}|z) e^{-\frac{(z-\omega_{\mu l}^t)^2}{2V_{\mu
            l}^t}}  }  =  \frac{1}{M P } \sum_{\mu, l}   V_{\mu l}^t \, .   
\ee
The meaning of this identity is that the mean squared error of the
current estimate of $Z=FX$ computed from the current estimates of
variances $V_{\mu l}^t$ is equal to the mean squared difference between the true
$Z$ and its current estimate $\omega_{\mu l}^t$.   
Using the above expression and eq.~(\ref{eq:id_gout}) we obtain an identity 
\be
          -  \frac{1}{MP} \sum_{\mu,l}   \partial_\omega g_{\rm out}(\omega_{\mu l}^t, y_{\mu l},
          V^t_{\mu l}) =   \frac{1}{MP} \sum_{\mu,l}   g^2_{\rm out}(\omega_{\mu l}^t, y_{\mu l},
          V^t_{\mu l}) \, . \label{eq:gout_NL}
\ee
The above identity holds also if the sum is only over $\mu$ or only
over $l$. Finally using the conditional independence assumed in BP
between the incoming messages we get also 
\be
     -  \frac{1}{M} \sum_{\mu}   \partial_\omega g_{\rm out}(\omega_{\mu l}^t, y_{\mu l},
          V^t_{\mu l}) s_{\mu i}(t)=   \frac{1}{M} \sum_{\mu}   g^2_{\rm out}(\omega_{\mu l}^t, y_{\mu l},
          V^t_{\mu l}) s_{\mu i}(t) \, . 
\ee
Under this condition we can simplify considerably the expressions for
$\Sigma_{il}^t$ and $Z_{\mu i}^t$ and get
\bea
       ( \Sigma^t_{il} )^{-1}&=&  -\frac{1}{N}\sum_{\mu}  \partial_{\omega}  g_{\rm
             out} (\omega^t_{\mu l}, y_{\mu l} , V^t_{\mu l}) \,
             {\hat f}^2_{\mu i}(t) \, ,  \label{gamp_Sigma_NL}\\
 ( Z^t_{\mu i} )^{-1}&=& - \frac{1}{N}\sum_{l} \partial_{\omega}  g_{\rm
             out} (\omega^t_{\mu l}, y_{\mu l} , V^t_{\mu l}) \,
            {\hat x}^2_{i l}(t) \, . \label{gamp_Z_NL}
\eea 
Note that the r.h.s. of the two above equations is always strictly
positive, which is reassuring given these expressions play the role of
a variance of a probability distribution.  Note also that the BiG-AMP
algorithm of \cite{parker2013bilinear} uses expressions
(\ref{gamp_Sigma_NL},\ref{gamp_Z_NL}) instead of
(\ref{gamp_Sigma},\ref{gamp_Z}), however, without mentioning the
reason. 

\subsection{Simplification for matrices with random entries}
\label{Sec:full_TAP}

Relying on the definitions of order parameters (\ref{qq}-\ref{QD}) and
using part of the results on section \ref{Sec:SE} we can write a version of the GAMP for matrix
factorization that is in the leading order in $N$ equivalent to
(\ref{gamp_V}-\ref{gamp_rs}) for matrices $X$ and $F$ having iid
elements.

Let us define the analog of $\hat \chi^t$ and $\hat q^t$ (\ref{eq:def_chihat}-\ref{eq:def_qhat}) as
empirical means of the corresponding functions 
\bea
       \tilde \chi^t &\equiv&   - \frac{1}{MP} \sum_{\mu,l}  \partial_\omega
        g_{\rm out}(\omega_{\mu i l}^t,y_{\mu l},V^t_{\mu i l})\, ,  \label{tilde_chi} \\
     \tilde q^t &\equiv &\frac{1}{MP} \sum_{\mu,l}  
        g^2_{\rm out}(\omega_{\mu i l}^t,y_{\mu l},V^t_{\mu i l}) \, . \label{tilde_q} 
\eea

Anticipating the reasoning that we shall use later in section  \ref{Sec:SE}, we realize that
in the leading order quantities $V^t_{il}$, $\Sigma^t_{il}$ and $Z^t_{\mu
  i}$ do not depend on their indices $i,l,\mu$. We have 
\bea
       V^t &=& Q^t_F Q^t_X - q^t_F q^t_X \, ,\label{eq:V_fullTAP} \\
  (\Sigma^t)^{-1}  &=& \alpha Q_F^t \tilde \chi^t - \alpha (Q_F^t  - q_F^t ) \tilde
           q^t \, ,  \\
 (Z^t)^{-1}  &=& \pi  Q_X^t \tilde \chi^t - \pi  (Q_X^t  - q_X^t ) \tilde q^t \, .  
\eea 
where we define
\bea
 q^t_X &\equiv& \frac{1}{NP} \sum_{jl}   {\hat x}^2_{jl}(t) \, , \quad \quad
 q^t_F \equiv \frac{1}{NM} \sum_{\mu i}   {\hat f}^2_{\mu i}(t) \, , \label{qq-1} \\
 Q^t_X &\equiv& q^t_X + \frac{1}{NP} \sum_{jl}   c_{jl}(t)  \, ,\quad
 \quad  Q^t_F \equiv q^t_F + \frac{1}{NM} \sum_{\mu i}   s_{\mu i}(t) \, . \label{QD-1}
\eea

These three equations can hence replace (\ref{gamp_V}),
(\ref{gamp_Sigma}) and (\ref{gamp_Z}) in GAMP. Furthermore, if we focus
on the fixed point and hence disregard some of the time indices
eqs.~(\ref{gamp_omega}), (\ref{gamp_T}) and (\ref{gamp_W}) can be simplified as 
\bea
 \omega_{\mu l}^t &=& \frac{1}{\sqrt{N}}\sum_j {\hat{x}}_{jl}(t) {\hat{f}}_{\mu j}(t) - g_{\rm
   out}(\omega_{\mu l}^{t-1}, y_{\mu l}, V^{t-1}) ( Q_X^t q_F^t +
 q_F^t Q_X^t - 2 q_F^t q_X^t ) \, ,  \label{eq:omega_fullTAP}\\
   T_{il}^t &=& \Sigma^t \left\{  \frac{1}{\sqrt{N}}\sum_\mu  g_{\rm
   out}(\omega_{\mu l}^{t}, y_{\mu l}, V^{t}) {\hat{f}}_{\mu i}(t)  + \alpha {\hat{x}}_{il}(t)
 \tilde \chi^t  \, q_F^t - \alpha {\hat{x}}_{il}(t)  (Q_F^t-q_F^t ) \tilde q^t
\right\}  \, , \label {eq:T_} \\ 
 W_{\mu i}^t &=& Z^t \left\{  \frac{1}{\sqrt{N}}\sum_l  g_{\rm
   out}(\omega_{\mu l}^{t}, y_{\mu l}, V^{t}) {\hat{x}}_{ i l}(t)  + \pi
   {\hat{f}}_{\mu i}(t)
 \tilde \chi^t  \, q_X^t -\pi    {\hat{f}}_{\mu i}(t)  (Q_X^t-q_X^t ) \tilde q^t
\right\} \, . \label {eq:W_} 
\eea

Within the  Bayes-optimal setting of (\ref{eq:P=P0}), we can use the
Nishimori identity (\ref{eq:gout_NL}) to show
that $\tilde \chi^t = \tilde q^t$. Consequently we can use only one of
those parameters computed either from (\ref{tilde_chi}) or from
(\ref{tilde_q}). The equations then further simplify to strictly
positive expressions for the variance-parameters \be (\Sigma^t)^{-1} =
\alpha q_F^t \tilde q^t \, , \quad \quad (Z^t)^{-1} = \pi q_X^t \tilde
q^t \, .  \label{eq:SZ_NL} \ee The set of
eqs.~(\ref{gamp_ac}-\ref{gamp_rs}), (\ref{eq:V_fullTAP}),
(\ref{eq:omega_fullTAP}-\ref{eq:SZ_NL}) was presented for the simple
output channel with white noise in \cite{krzakala2013phase}.  We
detail this procedure for the generic output in
Alg. \ref{alg:grbmamp}.  
\begin{algorithm}[H]
    \caption{Bayes-optimal Approximate Message Passing for Matrix Factorization\label{alg:grbmamp}}  
  \begin{algorithmic}
    \STATE {\bfseries Input:} $\mathbf{y}$
    \STATE \emph{Initialize}: $\mathbf{\hat x}$,$\mathbf{\hat f}$,$\mathbf{c}$,$\mathbf{s}$,
                              ${\rm Iter} =1$
    \STATE \emph{Initialize}:  $\{\omega_{\mu l},V\}$
    \REPEAT   
    \STATE Compute averages, eqs.~(\ref{qq-1}) and (\ref{QD-1}).
      \STATE AMP Update of $\{\omega_{\mu l},V\}$, eq.~(\ref{eq:omega_fullTAP}) and eq.~(\ref{eq:V_fullTAP}).
      \STATE AMP Update of $\Sigma$  and $Z$, eq.~(\ref{eq:SZ_NL}).
      \STATE AMP Update of $\{ T_{il} \}$,$\{ W_{\mu i}\}$, eq.~(\ref{eq:T_}) and eq.~(\ref{eq:W_}).
      \STATE AMP Update of the estimed quantities  $\mathbf{\hat
        x}$,$\mathbf{\hat f}$,$\mathbf{c}$,$\mathbf{s}$, eq.~(\ref{gamp_ac}) and eq.~(\ref{gamp_rs}).
      \STATE ${\rm Iter} \gets {\rm Iter} + 1$
    \UNTIL{Convergence on $\mathbf{\hat x}$,$\mathbf{\hat f}$}
  \end{algorithmic}
\end{algorithm}

We want to stress here that all these simplifications take place
for any output channel $P_{\rm out}(y|z)$.
In contrast with the ``uniform variance" approximation of
\cite{parker2013bilinear} the above result does not mean that the variances $c_{il}$ and
$s_{\mu i}$ are independent in the leading order on their indices. On
the contrary, these
variances depend on their indices even in the simplest case of GAMP
when the matrix $F_{\mu i}$ is known, i.e. for the compressed sensing problem. 

\section{The Bethe free entropy} 
\label{Sec:Bethe}



The fixed point of the belief propagation equations or its AMP version
can be used to estimate the posterior likelihood, i.e. the
normalization ${\cal Z}(Y)$ of the posterior probability
(\ref{eq:post}). The logarithm of this normalization is called the Bethe free entropy in statistical physics
\cite{YedidiaFreeman03}. Negative logarithm of the normalization is
called the free energy, in physics there is usually a temperature
associated to the free energy. Bethe free entropy is computed from the fixed point of the BP
equations (\ref{message_m}-\ref{message_tn}) as \cite{YedidiaFreeman03,MezardMontanari09}:

\be 
\Phi^{\rm Bethe} = \sum_{\mu l}
\log{{\cal Z}^{\mu l}} + \sum_{\mu i} \log{{\cal Z}^{\mu i}} + \sum_{i l} \log{{\cal Z}^{i
    l}} - \sum_{\mu i l} \log{{\cal Z}^{il,\mu l}} - \sum_{\mu i l}
\log{{\cal Z}^{\mu i,\mu l}} \, ,
\label{FBethe}
\ee
 where the five contributions are \bea {\cal Z}^{\mu
  i} &=& \int {\rm d}F_{\mu i} \, P_F(F_{\mu i})
\prod_{l=1}^P  \tilde n_{\mu l \to \mu i}(F_{\mu i}) \, , \\
{\cal Z}^{i l} &=& \int {\rm d}X_{ i l}\, P_X(X_{ i l})
\prod_{\mu=1}^M  \tilde m_{\mu l \to i l}(X_{ i l}) \, , \\
{\cal Z}^{\mu l} &=& \int \prod_{i=1}^N {\rm d}X_{ i l}\; \prod_{k=1}^N {\rm
  d}F_{\mu k} P_{\rm out}(y_{\mu l} | \sum_{k=1}^N F_{\mu k} X_{kl})
\prod_{i=1}^N m_{i l \to \mu l}(X_{ i l}) \prod_{k=1}^N
n_{\mu k \to \mu l}(X_{ i l})  \, , \\
{\cal Z}^{il,\mu l} &=& \int {\rm d}X_{ i l} \,
m_{i l \to \mu l}(X_{ i l}) \, \tilde m_{\mu l \to i l}(X_{ i l})  \, , \\
{\cal Z}^{\mu i,\mu l} &=& \int {\rm d}F_{\mu i} \, n_{\mu i \to \mu
  l}(F_{\mu i}) \, \tilde n_{\mu l \to \mu i }(F_{\mu i}) \, .  \eea
The derivatives of this expression for $\Phi^{\rm Bethe}$ with respect
to the messages give back the
full BP equations of (\ref{message_m}-\ref{message_tn}).
In this general form, the computation of $ \Phi^{\rm Bethe}$ for the
present problem is not of practical interest, and it is thus very useful to carry
out the same steps that we did in  Section \ref{sec:mes_pas} in order
to obtain a more mathematically tractable form of $ \Phi^{\rm Bethe}$  that is
asymptotically equivalent to (\ref{FBethe}) in the thermodynamic
limit, using the set of AMP message passing equations
(\ref{eq:def_Vmuil}-\ref{eq:def_omuil}), (\ref{eq:B}-\ref{eq:A}),
(\ref{eq:R}-\ref{eq:S}), and
(\ref{BP_a_closed}-\ref{BP_s_closed}). The result is:
\be 
\Phi^{\rm Bethe}  = \sum_{\mu l} \log{{\cal Z}^{\mu l}} +\sum_{\mu i} \log{ {\cal X}^{\mu i}} +
\sum_{ i l} \log{ {\cal X}^{i l}} + \sum_{\mu i l } \log{ \frac{ {\cal X}^{\mu i \to
      \mu l} }{{\cal X}^{\mu i} } } + \sum_{\mu i l } \log{ \frac{ {\cal X}^{ i l
      \to \mu l}}{{\cal X}^{il}} } \, ,\label{eq:Bethe_mp} 
\ee 
with 
\bea 
{\cal Z}^{\mu
  l} &=& \int {\rm d} z \frac{e^{-\frac{(\omega_{\mu l} -z)^2}{2V_{\mu
        l}}}}{\sqrt{2\pi V_{\mu l}}} P_{\rm
  out}(y_{\mu l}|z) \, ,  \label{eq:Zmul}\\
{\cal X}^{\mu i} &=& \int {\rm d}F_{\mu i} P_F(F_{\mu i}) e^{- \frac{N F^2_{\mu
      i}}{2
    {\cal Z}_{\mu i}} +\sqrt{N} F_{\mu i}  \frac{W_{\mu i}}{{\cal Z}_{\mu i}}  }\, , \\
{\cal X}^{ i l} &=& \int {\rm d}X_{ i l} P_X(X_{i l}) e^{- \frac{X^2_{ il}
  }{2
    \Sigma_{i l}} +  X_{i l}  \frac{T_{ i l}}{\Sigma_{ i l}} } \, ,\\
{\cal X}^{\mu i \to \mu l} &= &\int {\rm d}F_{\mu i} P_F(F_{\mu i})
e^{-\frac{F^2_{\mu i}}{2}\sum_{n \neq l } S_{\mu n \to \mu i} +
  F_{\mu i} \sum_{n\neq l} R_{\mu n \to \mu i} }  \, , \\
{\cal X}^{ i l \to \mu l} &= &\int {\rm d}X_{i l} P_X(X_{ i l})
e^{-\frac{X^2_{ i l}}{2N}\sum_{\nu \neq \mu } A_{\nu l \to i l} +
  \frac{ X_{i l}}{\sqrt{N}} \sum_{\nu\neq \mu} B_{\nu l \to i l} } \,
.  \eea 
Finally we might want to express the free entropy using the
fixed point of the GAMP eqs.~(\ref{gamp_V}-\ref{gamp_rs}). In order to
do this we need to rewrite the last two terms in (\ref{eq:Bethe_mp}).
Using an expansion in $1/N$ and keeping the leading order terms we get
 \bea
\sum_{l } \log{ \frac{ {\cal X}^{\mu i \to \mu l} }{{\cal X}^{\mu i} } } &=& -
\frac{W_{\mu i}}{ {\cal Z}_{\mu i}} {\hat{f}}_{\mu i} + \frac{1}{2 {\cal Z}_{\mu i}} (s_{\mu
  i} + {\hat f}^2_{\mu i}) + \frac{1}{2N} s_{\mu i} \sum_{l=1}^P g^2_{\rm
  out}(\omega_{\mu l}, y_{\mu
  l}, V_{\mu l}) {\hat x}^2_{il} \, , \\
\sum_{ \mu } \log{ \frac{ {\cal X}^{ i l \to \mu l}}{{\cal X}^{il}} } &=& - \frac{T_{i
    l}}{ \Sigma_{ i l}} {\hat{x}}_{ i l } + \frac{1}{2 \Sigma_{i l}} (c_{i l}
+ {\hat x}^2_{i l}) + \frac{1}{2N} c_{i l} \sum_{\mu=1}^M g^2_{\rm
  out}(\omega_{\mu l}, y_{\mu l}, V_{\mu l}) {\hat f}^2_{\mu i} \, .  
\eea 
We
remind that the above expressions give the posterior likelihood given
a {\it fixed point} on the GAMP equations.

To write the final formula in a more easily
interpretable form we use the probability distributions ${\cal M}_F$
and ${\cal M}_X$ defined in (\ref{eq:Mx}-\ref{eq:MF}) with normalizations  
\bea
     \hat  {\cal X}^{\mu i} &=&   \hat Z_F({\cal Z}_{\mu i},W_{\mu i}) \sqrt{2\pi
       Z_{\mu i}} = \int {\rm d}F_{\mu i} P_F(F_{\mu i})  e^{-
       \frac{(\sqrt{N} F_{\mu   i} -W_{\mu i} )^2}{2 Z_{\mu i}} }\, , \\
 \hat  {\cal X}^{i l} &=&  \hat Z_X(\Sigma_{i l},T_{i l}) \sqrt{2\pi
       \Sigma_{i l}}  = \int {\rm d}X_{ i l} P_X(X_{i l})  e^{- \frac{( X_{
       il} - T_{il} )^2 }{2 \Sigma_{i l}} } \, .
\eea
Putting all pieces together we find:
\bea
\Phi^{\rm Bethe}
&=& \sum_{il} \left[ \log \hat{{\cal X}}^{il} \left(T_{il},\Sigma_{il} \right) +
\frac{c_{il}+({\hat{x}}_{il}-T_{il})^2}{2\Sigma_{il}} \right] 
+ \sum_{\mu i} \left[ \log
\hat{{\cal X}}^{\mu i} \left(   W_{\mu i},Z_{\mu    i}\right)+ \frac{s_{\mu i}+({\hat{f}}_{\mu i}-W_{\mu i})^2}{2Z_{\mu i}}\right]
\nonumber \\
&+&\sum_{\mu l}\left[ \log Z^{\mu l}\left(\omega_{\mu l},V_{\mu l}\right) + 
\frac 1{2N} \sum_{\mu i l} g^2_{\rm out} \left(\omega_{\mu l},V_{\mu
    l} \right) \((s_{\mu i} {\hat x}^2_{il} + {\hat f}^2_{\mu i} c_{il}\)) \right]
\, .\label{FREE_ENERGY_BETHE}
\eea
The above expression evaluated at the fixed point of the AMP algorithm
hence gives the Bethe approximation to the log-likelihood. It is
mainly use to decide which fixed point of AMP is better. Indeed, there
are cases where there exist more than one AMP fixed point and it is
the one with the largest Bethe entropy that corresponds asymptotically
to the optimal Bayesian inference. 

\subsection{Fixed-point generating Bethe free entropy} 
Since the free entropy has a meaning only at the fixed point, we can
transform it by using any of the fixed point identities verified by
the BP messages. A well known property of Bethe free entropy and
belief propagation \cite{YedidiaFreeman03} is that the BP fixed points
are stationary points of the Bethe free entropy. 
In this section we show that also for the AMP for matrix factorization
the Bethe free entropy can be written in a form that
allows to generate the fixed-point BP equations as a stationary point.
It can be achieved by writing the Bethe free entropy
eq.~(\ref{FREE_ENERGY_BETHE}) as
\bea 
&&\Phi^{\rm Bethe}_{\rm
  AMP}\left(\{T_{il}\},\{\Sigma_{il}\},\{W_{\mu i}\},\{Z_{\mu
    i}\},\{\omega_{\mu l}\}, \{{\hat{x}}_{il}\},\{c_{il}\},\{{\hat{f}}_{\mu i}\},\{s_{\mu
    i}\}\right) = \sum_{il} \left[\log \hat{{\cal X}}^{il} \left(T_{il},\Sigma_{il} \right) +
\frac{c_{il}+({\hat{x}}_{il}-T_{il})^2}{2\Sigma_{il}}\right]\nonumber\\
&&+ \sum_{\mu i} \left[\log \hat{{\cal X}}^{\mu i} \left( W_{\mu i},Z_{\mu
    i}\right)+ \frac{s_{\mu i}+({\hat{f}}_{\mu i}-W_{\mu i})^2}{2Z_{\mu i}}\right]
+\sum_{\mu l}\left[ \log Z^{\mu l}\left(\omega_{\mu l},V_{\mu l}\right) +
\frac 12 \sum_{\mu l} \frac{(\omega_{\mu l}-\sum_i {\hat{f}}_{\mu i}
  {\hat{x}}_{il}/\sqrt{N})^2}{V_{\mu l}-\sum_i s_{\mu
    i}c_{i l}/{N}} \right ] \label{FREE_ENERGY_BETHE_GENERATING}\\
&& \text{with~} \quad V_{\mu l} =  \frac{1}{N}\sum_j [c_{jl} s_{\mu j}  +
    c_{jl} {\hat f}^2_{\mu j}   +  {\hat x}^2_{jl}    s_{\mu j}  ] \, .\nonumber
\eea
In order to derive   (\ref{FREE_ENERGY_BETHE_GENERATING}) from
(\ref{FREE_ENERGY_BETHE}), we have substituted
$g^2_{\rm out}$ by its fixed point expression, and imposed
the values of the variance $V$. Under the present form, the Bethe free
entropy satisfies the following theorem:

\begin{theorem}
  \emph{(Bethe/AMP correspondance)} 
The fixed point of the AMP equations eqs. (\ref{gamp_V}-\ref{gamp_rs}) are the
  stationary points
  of the cost function $\Phi^{\rm Bethe}_{\rm AMP}$
  eq.~(\ref{FREE_ENERGY_BETHE_GENERATING}).
\end{theorem}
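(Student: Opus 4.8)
The plan is to verify the stationarity conditions $\partial \Phi^{\rm Bethe}_{\rm AMP}/\partial(\cdot)=0$ for each of the nine families of variables $\{T_{il}\},\{\Sigma_{il}\},\{W_{\mu i}\},\{Z_{\mu i}\},\{\omega_{\mu l}\},\{a_{il}\},\{c_{il}\},\{r_{\mu i}\},\{s_{\mu i}\}$, treating $V_{\mu l}$ not as an independent variable but as the derived quantity fixed by its definition in (\ref{FREE_ENERGY_BETHE_GENERATING}); since that definition is precisely (\ref{gamp_V}), this equation is built into the cost function and need not be recovered by differentiation. I would then show that the system of nine stationarity equations coincides, solution by solution, with the GAMP fixed-point equations (\ref{gamp_omega}) and (\ref{gamp_Sigma}--\ref{gamp_rs}). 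Because at a stationary point \emph{all} nine conditions hold simultaneously, one is free to substitute one condition into another when matching a given derivative to its AMP counterpart, and this establishes the equivalence of the two solution sets in both directions.

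First I would dispatch the ``input-channel'' derivatives. Differentiating the Gaussian integral $\hat{\mathcal X}^{il}(T_{il},\Sigma_{il})$ with respect to $T_{il}$ and using the representation (\ref{eq:deriv_fa}), the condition $\partial\Phi/\partial T_{il}=0$ collapses to $a_{il}=f_a(\Sigma_{il},T_{il})$, the first half of (\ref{gamp_ac}); similarly $\partial\Phi/\partial W_{\mu i}=0$ gives $r_{\mu i}=f_r(Z_{\mu i},W_{\mu i})$. Differentiating instead with respect to $\Sigma_{il}$ produces $f_c+(f_a-T_{il})^2=c_{il}+(a_{il}-T_{il})^2$, which, once the relation $a_{il}=f_a$ is inserted, reduces to $c_{il}=f_c(\Sigma_{il},T_{il})$; the analogous $Z_{\mu i}$ derivative yields $s_{\mu i}=f_s(Z_{\mu i},W_{\mu i})$. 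Next, writing $P_{\mu l}=\omega_{\mu l}-\tfrac1{\sqrt N}\sum_i r_{\mu i}a_{il}$ and $\tilde V_{\mu l}=V_{\mu l}-\tfrac1N\sum_i s_{\mu i}c_{il}$ for the numerator and denominator of the last term, the $\omega_{\mu l}$ derivative uses only $\partial_\omega\log\mathcal Z^{\mu l}=g_{\rm out}(\omega_{\mu l},y_{\mu l},V_{\mu l})$ from the definition (\ref{eq:def_gout}), together with the algebraic simplification $\tilde V_{\mu l}=\tfrac1N\sum_j[r^2_{\mu j}c_{jl}+a^2_{jl}s_{\mu j}]$, to give the fixed-point form of (\ref{gamp_omega}) with its Onsager reaction term; it also supplies the relation $P_{\mu l}/\tilde V_{\mu l}=-g_{\rm out}$, hence $P^2_{\mu l}/\tilde V^2_{\mu l}=g^2_{\rm out}$, that I will reuse below.

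The substantial work is in the derivatives with respect to $c_{il},a_{il}$ (and their transposes $s_{\mu i},r_{\mu i}$), since these variables enter $\Phi^{\rm Bethe}_{\rm AMP}$ simultaneously through the first bracket, through $V_{\mu l}$ inside $\log\mathcal Z^{\mu l}$, and through both the numerator and denominator of the last quadratic term. The key lemma I would isolate is the variance identity
\be
   \frac{\partial}{\partial V}\log\mathcal Z^{\mu l}(\omega_{\mu l},V) = \tfrac12\left[\partial_\omega g_{\rm out}(\omega_{\mu l},y_{\mu l},V)+g^2_{\rm out}(\omega_{\mu l},y_{\mu l},V)\right]\, ,
\ee
which follows directly from (\ref{eq:id_gout}). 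Combining this with $\partial V_{\mu l}/\partial c_{il}=\tfrac1N(s_{\mu i}+r^2_{\mu i})$ and evaluating the last term via $P^2_{\mu l}/\tilde V^2_{\mu l}=g^2_{\rm out}$, the $g^2_{\rm out}r^2_{\mu i}$ contribution from the last term cancels against part of the one coming from $\partial_V\log\mathcal Z^{\mu l}$, leaving exactly the $g^2_{\rm out}s_{\mu i}$ term, so that $\partial\Phi/\partial c_{il}=0$ reproduces (\ref{gamp_Sigma}) including both the $\partial_\omega g_{\rm out}$ and $g^2_{\rm out}$ pieces; the transpose gives (\ref{gamp_Z}). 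Finally $\partial\Phi/\partial a_{il}=0$ yields a relation for $T_{il}/\Sigma_{il}$ in which, after substituting the just-derived expression (\ref{gamp_Sigma}) for $1/\Sigma_{il}$, the spurious terms cancel and one recovers precisely (\ref{gamp_T}) with its two Onsager corrections; the transpose gives (\ref{gamp_W}).

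I expect the bookkeeping of the $V_{\mu l}$-dependence to be the main obstacle. The generating form (\ref{FREE_ENERGY_BETHE_GENERATING}) was obtained from (\ref{FREE_ENERGY_BETHE}) exactly by replacing $g^2_{\rm out}$ with its fixed-point value and re-expressing the output contribution as a squared difference over the reduced variance $\tilde V_{\mu l}$; it is this engineering that makes the chain-rule terms originating from $V_{\mu l}$ reassemble into the correct Onsager reaction terms. The delicate point is therefore to retain every $O(1/N)$ contribution of the $V_{\mu l}$ chain rule and to use the joint stationarity conditions -- in particular the $\omega$-condition and the $\Sigma$/$Z$ equations -- consistently when identifying each derivative with its AMP counterpart. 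Once this is carried out carefully in both directions, the two solution sets coincide and the theorem follows.
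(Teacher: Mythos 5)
Your proposal is correct and follows essentially the same route as the paper's proof: stationarity in $T,\Sigma,W,Z$ gives the input equations $a=f_a$, $c=f_c$, $r=f_r$, $s=f_s$; the $\omega$-derivative via $\partial_\omega \log {\cal Z}^{\mu l}=g_{\rm out}$ gives the fixed-point form of the $\omega$ equation; and the derivatives in $a,c,r,s$ are handled through the variance identity $\partial_V \log {\cal Z}^{\mu l}=\frac 12\left(g_{\rm out}^2+\partial_\omega g_{\rm out}\right)$ together with the relation $P_{\mu l}/\tilde V_{\mu l}=-g_{\rm out}$, exactly as in the paper. Your version merely spells out the cancellations (e.g.\ of the $g_{\rm out}^2 r_{\mu i}^2$ pieces in the $c_{il}$ derivative) that the paper leaves to the reader, and these check out.
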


\begin{proof}
  This can be checked explicitly by setting to
  zero the derivatives of $\Phi^{\rm Bethe}_{\rm AMP}$. Indeed, the derivatives with respect to $T,\Sigma,W$ and $Z$
  yield \bea {\hat{x}}_{il} &=& T_{il} + \Sigma_{il} \frac
  {\partial}{\partial T} \log \hat{{\cal X}}^{il}
  = f_X(\Sigma_{il},T_{il})\, ,\label{astar} \\
  c_{il} &=& \Sigma^2_{il} \frac {\partial}{\partial \Sigma_{il}} \log
  \hat{{\cal X}}^{il} -({\hat{x}}_{il}-T_{il})^2
  = f_c(\Sigma_{il},T_{il}) \, ,\label{cstar} \\
  {\hat{f}}_{\mu i} &=& W_{\mu i} + Z_{\mu i} \frac {\partial}{\partial W}
  \log \hat{{\cal X}}^{\mu i}
  = f_F(Z_{\mu i},W_{\mu i}) \, ,\label{rstar}\\
  s_{\mu i} &=& Z^2_{\mu i} \frac {\partial}{\partial Z_{\mu i}} \log
  \hat{{\cal X}}^{\mu i} -({\hat{f}}_{\mu i}-W_{\mu i})^2
  = f_s(Z_{\mu i},W_{\mu i}) \, .\label{sstar}
  \eea 
%

Then, the stationarity with respect to $\omega$ can be expressed
easily by noting that $\frac{\partial}{\partial \omega_{\mu z}}\log
Z^{\mu l}=g_{\rm out}$ (a consequence of 
  Eq.~(\ref{eq:Zmul}):
\be
  g_{\rm out} (V_{\mu l})+ \frac{(\omega_{\mu l}-\sum_i {\hat{f}}_{\mu i}
    {\hat{x}}_{il} /\sqrt{N})}{V_{\mu
      l}-\sum_i s_{\mu i}c_{i l}/{N}} =0 \, ,\label{eq:om} \ee 
  which is nothing but the fixed point equation for $\omega$. 

It is convenient to compute the derivative with respect to $V$ (even though
  this quantity is eventually a function of $r$,$s$,$a$ and $c$) using
  $\frac{\partial}{\partial V_{\mu z}}\log Z^{\mu l}=\frac 12\((g_{\rm
    out}^2 + \partial_{\omega} g_{\rm out}\)) $ so that at the fixed
  point, when eq.~(\ref{eq:om}) is satisfied, we have
\be
\frac{\partial \Phi_{\rm AMP}^{\rm Bethe}}{\partial V_{\mu i}} = \frac 12
  \((g_{\rm out}^2 + \partial_{\omega} g_{\rm out}\)) - \frac 12
  \frac{(\omega_{\mu l}-\sum_i {\hat{f}}_{\mu i} {\hat{x}}_{il}/\sqrt{N})^2}{(V_{\mu l}-\sum_i
    s_{\mu i}c_{i l}/{N})^2} = \frac 12 \partial_{\omega} g_{\rm
    out}\, .
\ee
Using this equation, one can finally check explicitly that deriving with respect to
$a,c,r$ and $s$ yields the remaning AMP equations for $T,\Sigma,W$ and
$Z$. This concludes the proof.
\end{proof}

\subsection{The variational Bethe free entropy}
\label{Sec:Bethe_var}

We have shown that the fixed points of the approximate message passing equations are extrema of
$\Phi^{\rm Bethe}_{\rm AMP}$. However they are in general saddle points
of this function, and it is very useful to derive an alternative
``variational'' free
entropy, the {\it maxima} of which are the fixed points. In particular, this will allow us to find these
fixed points by alternative methods which do not rely on iterating the
equations as was done for compressed sensing in \cite{KrzakalaManoel14}.
This variational free entropy can also be used not only at the
maximum, but for each possible values of the parameters, as the
current estimate of the quality of reconstruction. Such a property has
been used to implement a so-called {\it adaptive damping} in
compressed sensing \cite{VilaSchniterUs} and it can hence be anticipated that
similar implementation trick will be useful for matrix factorization
as well. 

\subsubsection{Generic output channel}
In order to derive the variational Bethe free entropy, we impose the
fixed point conditions, and express the free
entropy only as a function of the parameters of our trial
distributions for the two matrices. Then, we simply have
\be
\Phi^{\rm Bethe}_{\rm var} \left(\{T_{il}\},\{\Sigma_{il}\},\{W_{\mu    i}\},\{Z_{\mu
    i}\} \right) =\Phi^{\rm Bethe}_{\rm AMP}\left(\{T_{il}\},\{\Sigma_{il}\},\{W_{\mu i}\},\{Z_{\mu
    i}\},\{\omega_{\mu l}^*\},\{ a^*_{il}\},\{c^*_{il}\},\{r^*_{\mu i}\},\{s^*_{\mu i}\}\right)
\ee
where $a^*,c^*,r^*,s^*$ are given in terms of the Eqs.
(\ref{astar}-\ref{sstar}) by: $a^*=f_X(\Sigma_{il},T_{il})$,
 $c^*=f_c(\Sigma_{il},T_{il})$, $r^*=f_F(Z_{\mu i},W_{\mu
  i})$, $s^*=f_s(Z_{\mu i},W_{\mu i})$, and
$\omega^*$ is the solution of (\ref{gamp_omega}).

In order to write this variational expression in a nicer form, let us
notice that the Kullback-Leibler divergences between ${\cal M}_X$, ${\cal M}_F$ (\ref{eq:Mx}-\ref{eq:MF}) and
the prior distribution are
\bea - D_{\rm KL}( {\cal M}_F || P_F ) &=& \log{ \hat
  {\cal X}^{\mu i} } + \frac{f_s(Z_{\mu i} ,W_{\mu i})+ (f_F(Z_{\mu i} ,W_{\mu i})- W_{\mu i})^2}{2  Z_{\mu i}} \, , \\
- D_{\rm KL}( {\cal M}_X|| P_X ) &=& \log{ \hat {\cal X}^{i l} } + \frac{f_c(\Sigma_{i l},T_{i l}) + (f_X(\Sigma_{i l},T_{i l})- T_{i l})^2}{2 \Sigma_{i l}} \, .  
\eea 
Let us define an additional distribution
\bea
   {\cal M}_{\rm out}(\omega_{\mu l},V_{\mu l},z) =  \frac{1}{{\cal Z}^{\mu l}}
         P_{\rm out}(y_{\mu l}|z) \frac{1}{\sqrt{2\pi V_{\mu l}}}
         e^{-\frac{(z-\omega_{\mu l})^2}{2V_{\mu l}}} \, , \label{eq:Mz}\\
\eea
where ${\cal Z}^{\mu l}$ is given by (\ref{eq:Zmul}). Then one has
\be
- D_{\rm KL}( {\cal M}_{\rm out}|| P_{\rm out} ) = \log {\cal Z}^{\mu l} +
\frac 12 \log 2\pi  V_{\mu l} + \frac 12\((1+ V_{\mu l}  \partial_{\omega} g_{\rm out} +
 V_{\mu l}  g_{\rm out} ^2\))\, .
\ee
Starting from (\ref{FREE_ENERGY_BETHE}), we find:
\bea 
 &&\Phi^{\rm Bethe}_{\rm var} \left(\{T_{il}\},\{\Sigma_{il}\},\{W_{\mu
    i}\},\{Z_{\mu i}\} \right) = - \sum_{\mu i} D_{\rm KL}( {\cal M}_F(
Z_{\mu i},W_{\mu i} ) || P_F ) - \sum_{il} D_{\rm KL}( {\cal M}_X(
\Sigma_{i l},T_{i l} ) || P_X ) \nonumber \\
&&+\sum_{\mu l} \left[\log Z^{\mu l}\left(\omega^*_{\mu l},V^*_{\mu
    l}\right) + \frac {g_{\rm out}^2}2 (V^*_{\mu l} - \sum_j s^*_{\mu j}
c^*_{jl} /N)\right] \, ,\nonumber\\
&&= - \sum_{\mu i} D_{\rm KL}( {\cal M}_F( Z_{\mu i},W_{\mu i} ) ||
P_F ) - \sum_{il} D_{\rm KL}( {\cal M}_X( \Sigma_{i l},T_{i l} ) ||
P_X )  \label{eq:Bethe}  \\ &&- \sum_{\mu l} \left[ D_{\rm KL}( {\cal M}_{\rm
  out}(\omega^*_{\mu l},V^*_{\mu l})|| P_{\rm out} )
-\frac 12\left( \log{2\pi V_{\mu l}^*} +1+V_{\mu l}^*\partial_{\omega} g_{\rm out} +
\frac{g_{\rm out}^2}{N} \sum_j s^*_{\mu j}
c^*_{jl}  \right)  \right]
, \nonumber 
\eea
 with $V^*$ and $\omega^*$ satisfying
eqs.~(\ref{eq:V_fullTAP}) and (\ref{eq:omega_fullTAP}).  Note that this
expression has the same form as the one used in
\cite{rangan2013fixed} for the simpler case of GAMP for compressed
sensing and for the generalized linear problem. Our expression thus
generalizes the formula of \cite{rangan2013fixed} to the bi-linear case.


\subsubsection{The AWGN output channel} 

In the case of the additive white Gaussian noise output channel
(\ref{eq:DL_Pout}) the function $g_{\rm out}$ takes the simple form:
\be
 g_{\rm out}(\omega_{\mu l},y_{\mu l},V_{\mu l}) =
\frac{y_{\mu l} - \omega_{\mu l}}{\Delta + V_{\mu l}}\ ,
\ee
 hence
$\partial_\omega g_{\rm out}$ does not depend on the variable
$\omega_{\mu l}$. The only explicit dependence on $\omega_{\mu l}$ in
the free entropy is through eq. (\ref{eq:Zmul}) which becomes for the
AWGN output channel 
\be
{\cal Z}^{\mu l}= \frac{1}{\sqrt{2\pi(\Delta + V_{\mu
      l})}} e^{-\frac{(y_{\mu l} - \omega_{\mu l})^2}{2(\Delta+ V_{\mu
      l})}} \, .\label{eq:zml_GN} \ee The free entropy is defined only at
the fixed point of the GAMP equations. Given a fixed point we can
express from (\ref{gamp_omega}) for the AWGN channel \be \frac{y_{\mu
    l} - \omega_{\mu l}}{\Delta + V_{\mu l}} = \frac{y_{\mu l} -
  \frac{1}{\sqrt{N}}\sum_{j} {\hat{x}}_{jl} {\hat{f}}_{\mu j} }{ \Delta + \frac{1}{N}
  \sum_j c_{jl} s_{\mu j} }\, . \ee
 We plug this last expression into
(\ref{eq:zml_GN}) to obtain 
\be
{\cal Z}^{\mu l}= \frac{1}{\sqrt{2\pi(\Delta
    + V_{\mu l})}} e^{-\frac{(y_{\mu l} - \frac{1}{\sqrt{N}}\sum_{j}
    {\hat{x}}_{jl} {\hat{f}}_{\mu j} )^2}{2(\Delta+ \frac{1}{N} \sum_j c_{jl} s_{\mu
      j} )^2} (\Delta + V_{\mu l})} \, .
\ee
 Simplifying the last two terms
of eq. (\ref{eq:Bethe}) we obtain for the AWGN channel:
\bea
 \Phi^{\rm Bethe}
_{\rm AWGN}(\{T_{il}\},\{\Sigma_{il}\},\{W_{\mu i}\},\{ Z_{\mu i}\}) &=& - \sum_{\mu
  i} D_{\rm KL}( {\cal M}_F( Z_{\mu i},W_{\mu i} ) || P_F ) -
\sum_{il} D_{\rm KL}( {\cal M}_X( \Sigma_{i l},T_{i l} ) || P_X )
\nonumber \\ && - \sum_{\mu l} \frac{(y_{\mu l} -
  \frac{1}{\sqrt{N}}\sum_{j} a^*_{jl} r^*_{\mu j} )^2}{2(\Delta+
  \frac{1}{N} \sum_j c^*_{jl} s^*_{\mu j}) } - \frac{1}{2} \sum_{\mu l}
\log{\left[2\pi (\Delta + V^*_{\mu l})\right]} \,
. \label{eq:Bethe_AWGN} 
\eea 
The first three terms of this free
entropy are clearly negative and the last term cannot be larger than
$-MP \log{(2\pi \Delta)}/2$. Hence the free entropy
(\ref{eq:Bethe_AWGN}) is bounded from above. This is consistent with its
interpretation as a variational expression. The stationary points of
(\ref{eq:Bethe_AWGN}) are the fixed points of the GAMP algorithm and
hence the fixed points corresponding to the maximum likelihood could
also be found by direct maximization of the expression
(\ref{eq:Bethe_AWGN}). This offers an interesting algorithmic
alternative to the iterative AMP algorithm that was explored for the
compressed sensing problem in \cite{KrzakalaManoel14}. 

Another use of the expression  (\ref{eq:Bethe_AWGN}) is that during the
iteration of the GAMP algorithm its value should be increasing, hence
we can adaptively choose the step-size of the iterations to ensure
this increase. Such an adaptive dumping was implemented in
\cite{parker2014bilinear} using a different form of the free entropy that
does not correspond to the Bethe free entropy but to the variational
mean field (VMF) free entropy which reads 
\bea
   \Phi^{\rm VMF}_{\rm AWGN}(\{T_{il}\},\{\Sigma_{il}\},\{W_{\mu i}\},\{ Z_{\mu i}\}) &=& - \sum_{\mu i} D_{\rm KL}( {\cal
    M}_F( Z_{\mu i},W_{\mu i} ) || P_F )  - \sum_{il} D_{\rm KL}(
  {\cal M}_X( \Sigma_{i l},T_{i l} ) || P_X ) \nonumber \\ && - \frac{1}{2\Delta} \sum_{\mu l}
 \left[ (y_{\mu l} - \frac{1}{\sqrt{N}}\sum_{i}  {\hat{f}}_{\mu i} {\hat{x}}_{il}
    )^2 + V_{\mu l} \right]-  \frac{MP}2 \log{ 2\pi \Delta} \, . \label{eq:MTF_AWGN}
\eea

It is easy to check that $\Phi^{\rm VMF}_{\rm
  AWGN}(T_{il},\Sigma_{il},W_{\mu i}, Z_{\mu i})<\Phi^{\rm Bethe}_{\rm
  AWGN}(T_{il},\Sigma_{il},W_{\mu i}, Z_{\mu i})$, which could be
expected since the Bethe expression, which is asymptotically exact,
should be a better approximation than the mean field approximation.

\section{Asymptotic analysis}
\label{Sec:AA}

\subsection{State evolution}
\label{Sec:SE}


In this section we derive the asymptotic ($N\to \infty$) evolution of
the GAMP iterations for matrix
factorization. This asymptotic analysis holds as long as all the elements
of the true matrix $F$  are iid random variables generated from a
distribution $P_{F^0}$, and all elements of the true matrix $X$ are iid random variables generated from a
distribution $P_{X^0}$. In general we
will not assume $P_{F^0}=P_{F}$ and $P_{X^0}=P_{X}$: this special case
of Bayes-optimal analysis will be treated in the next section. 

In the present section we will
also distinguish between the true output channel characterized by the
conditional probability distribution $P^0_{\rm out}(y_{\mu l}|z^0_{\mu l})$
and the output channel that is being used in the GAMP algorithm
$P_{\rm out}(y_{\mu l}|z_{\mu l})$. We remind that $z^0_{\mu
  l}=\sum_{i=1}^N F^0_{\mu i} X^0_{il}$, where $F^0_{\mu i}$ and $X^0_{il}$ are the elements of the
actual matrices that we do not know and aim to recover,  and $z_{\mu
  l}=\sum_{i=1}^N F_{\mu i} X_{il}$. Again the special case of
$P^0_{\rm out}=P_{\rm out}$ will be treated the next section. 

We will assume that at least one of the probability distributions
$P_{F^0}$ and $P_{X^0}$ (and also at least one of $P_{F}$ and $P_{X}$)
has zero mean, otherwise there would be additional
terms in this asymptotic analysis, as e.g. in \cite{CaltagironeKrzakala14}. 

Let us first recall the definition of the order parameters; all of them are finite, of order
$O(1)$, in the thermodynamic limit:
\bea
 m^t_X &\equiv& \frac{1}{NP} \sum_{jl}   {\hat{x}}_{jl}(t) X^0_{jl}\,
 ,  \label{mx}\quad \quad  m^t_F \equiv \frac{1}{M\sqrt{N}} \sum_{\mu
   i}   {\hat{f}}_{\mu i}(t) F^0_{\mu i} \, , \\
 q^t_X &\equiv& \frac{1}{NP} \sum_{jl}   {\hat x}^2_{jl}(t) \, , \quad \quad
 q^t_F \equiv \frac{1}{NM} \sum_{\mu i}   {\hat f}^2_{\mu i}(t) \, , \label{qq} \\
 Q^t_X &\equiv& q^t_X + \frac{1}{NP} \sum_{jl}   c_{jl}(t)  \, ,\quad
 \quad  Q^t_F \equiv q^t_F + \frac{1}{NM} \sum_{\mu i}   s_{\mu i}(t) \, . \label{QD}
\eea
Note also that the above sums over a pair of indices could also be
sums over only one index (and adjusted normalization) and the order
parameters would not change in the leading order: for instance, we
expect that in the thermodynamic limit, $\frac{1}{N} \sum_{j}
{\hat{x}}_{jl}(t) X^0_{jl}$ will go to the same limit as $m^t_X$ defined in (\ref{mx}).

First let us compute the average over realizations of $X^0$, $F^0$ and
$w^0$ of
the quantity $V_{\mu l}^t$ defined by eq.~(\ref{eq:def_V}). By the
assumptions of the belief propagation equations
(\ref{message_tm}-\ref{message_tn}), the terms in the product in (\ref{eq:def_V}) are statistically
independent and we can hence write for the average, to the leading
order 
\be
         V^t = Q^t_F Q^t_X - q^t_F q^t_X \, .
\ee
Further, we realize that the variance of this quantity (again over the
realizations of $X^0$, $F^0$ and $w^0$) is 
\be
      \mathbb{E}[ (  V_{\mu l}^t -  V^t )^2] =  \mathbb{E}\left[ \left\{  \frac{1}{N}\sum_i
      [c_{il \to \mu l}(t) -\frac{1}{N} \sum_k c_{kl}(t) ] s_{\mu i \to \mu l}()  + ...
      + ... \right\}^2\right] = O(1/N)  \, . 
\ee
In order to derive this result, we expand the square and obtain a double sum over $i$
and $j$. Because of the conditional independence between incoming
messages assumed in belief propagation, the terms with $i \neq j$
average exactly to zero. As for the
terms with $i=j$, they add up to a contribution of order $O(1/N)$. From this
we can conclude that, to leading order in the thermodynamic limit, the quantity $V_{\mu
  l}^t = V^t$ does not depend on its indices. 

Further we are interested in the average $\Sigma^t$ of the quantity $\Sigma_{il}^t$
over the realization of $F^0$, $X^0$, and $w^0$. Using the definition of $\Sigma_{il}^t$
eq.~(\ref{eq:def_Sigma}) and the expression for $A_{\mu l\to il}^t$ eq.~(\ref{eq:A}) we obtain 
\be
      (\Sigma_{il}^t)^{-1} = -\frac{1}{N}\sum_{\mu} \left\{ \partial_\omega
        g_{\rm out}(\omega_{\mu i l}^t,y_{\mu l},V^t_{\mu i l}) [
        {\hat{f}}_{\mu i\to \mu l}^2(t) +  s_{\mu i\to \mu l}(t)   ]  +
        g^2_{\rm out}(\omega_{\mu i l}^t,y_{\mu l},V^t_{\mu i l})
        s_{\mu i\to \mu l}(t)  \right\}\, .
\ee
We proceed analogously for $Z^{t}_{\mu i}$. 
Using again the conditional independence between incoming messages
assumed in BP equations we obtain 
\bea
           (\Sigma^t)^{-1}  &=& \alpha Q_F^t \hat \chi^t - \alpha (Q_F^t  - q_F^t ) \hat
           q^t \, ,  \label{eq:DE_S}\\
 (Z^t)^{-1}  &=& \pi  Q_X^t \hat \chi^t - \pi  (Q_X^t  - q_X^t ) \hat q^t \, ,  \label{eq:DE_Z}
\eea
where we introduced new parameters 
\bea
      \hat \chi^t &=& - \frac{1}{M} \mathbb{E}_{F^0,X^0,w^0}\left[ \sum_\mu  \partial_\omega
        g_{\rm out}(\omega_{\mu i l}^t,y_{\mu l},V^t_{\mu i l}) 
      \right] \, ,\label{eq:def_chihat}\\
     \hat q^t &=& \frac{1}{M} \mathbb{E}_{F^0,X^0,w^0}\left[  \sum_\mu  
        g^2_{\rm out}(\omega_{\mu i l}^t,y_{\mu l},V^t_{\mu i l})
      \right] \, .\label{eq:def_qhat}
\eea
We use  $y_{\mu l} = h(z^0_{\mu l},w^0)$, and we remind that, to leading order, $V^t_{\mu i l}=V^t$. 
The function $g_{\rm out}$ above hence depends on two correlated fluctuating
variables $\omega_{\mu i l}^t$ and $z^0_{\mu l}$, and on $w^0$. Both
the variables $\omega_{\mu i l}^t$ and $z^0_{\mu l}$
are sums over many independent (for $z$ this is by construction, for
$\omega$ by the BP assumptions) terms. Hence, according to the
central limit theorem, they are Gaussian random variables. Their mean is zero when at least one of
the distributions $P_X$ and $P_F$ (and one of the $P_{X^0}$ and
$P_{F^0}$) have zero mean (which we assume in this section). The
covariance matrix between the variables  $\omega_{\mu i l}^t$ and
$z^0_{\mu l}$ is 
\bea
        \mathbb{E}( \omega^2_{\mu i l} ) &=&\frac{1}{N} \sum_{j (\neq i)}
        \mathbb{E}(   {\hat f}^2_{\mu j \to \mu l}   {\hat x}^2_{j l  \to \mu l} ) + 
        \frac{1}{N} \sum_{ j(\neq i),k(\neq i,j)}    \mathbb{E}(  {\hat{f}}_{\mu j \to \mu l}
        {\hat{f}}_{\mu k \to \mu l}   {\hat{x}}_{j l  \to \mu l} {\hat{x}}_{k l  \to \mu l} )=
        q_F q_X \, ,\label{eq:qq} \\
 \mathbb{E}( \omega_{\mu i l} z^0_{\mu l} ) &=&\frac{1}{N} \sum_{j(\neq i)}
 \mathbb{E}(  F_{\mu j}^0 {\hat{f}}_{\mu j \to \mu l}  X^0_{jl}  {\hat{x}}_{jl \to \mu
 l} )  +   \frac{1}{N} \sum_{k, j(\neq i,k)} \mathbb{E}(  F_{\mu k}^0 {\hat{f}}_{\mu j \to \mu l}  X^0_{kl}  {\hat{x}}_{jl \to \mu
 l} ) = m_F m_X\, , \label{eq:mm}  
\eea 
where we again used the BP assumption of independence between the incoming messages, but
also between $F^0_{\mu i}$ and the message ${\hat{x}}_{il \to \mu l}$, and
between $X_{il}^0$ and ${\hat{f}}_{\mu i \to \mu l}$. As for the variance of
$z^0$, we denote it by:
\be
   \mathbb{E}[ (z^0_{\mu l})^2 ] = N  \, \mathbb{E}[ (F^0_{\mu
          i} )^2 ] \,  \mathbb{E}[ (X^0_{i l})^2 ] =  \langle (z^0)^2
        \rangle \, .\label{eq:zz} 
\ee

Altogether this gives for $\hat \chi$ and $\hat q$
\bea
     \hat \chi^t &=& -  \int {\rm d}w\,  P_0(w) \int {\rm d}p \,
     {\rm d}z \, {\cal N}[p,z; q^t_F q^t_X,\langle (z^0)^2 \rangle, m^t_F m^t_X
     ] \, \partial_p g_{\rm out}[p,h(z,w),Q_F^t Q_X^t - q_F^t q_X^t] \, , \label{eq:chihat}\\
 \hat q^t &=& \int {\rm d}w\,  P_0(w) \int {\rm d}p \,
     {\rm d}z \, {\cal N}[p,z; q^t_F q^t_X,\langle (z^0)^2 \rangle, m^t_F m^t_X
     ] \, g^2_{\rm out}[p,h(z,w),Q_F^t Q_X^t - q_F^t q_X^t] \, . \label{eq:qhat} 
\eea
where  ${\cal N}[p,z; \sigma^2_p, \sigma^2_z,  \mathbb{E}(p z) ]$ is a joint Gaussian distribution of variables $p$ and
$z$ with zero means, variances and correlation given in the argument. 
From the above analysis it also follows that in the leading order the
quantities $\Sigma^t_{il}$ and $Z_{\mu i}^t$ do not depend on their
indices $il$ and $\mu l$. 

We now study the asymptotic behavior of $T^t_{il}$ defined
by eq.~(\ref{eq:def_Sigma}) 
\bea
    T^t_{il} /\Sigma^t_{il} = \frac{1}{\sqrt{N}}\sum_{\mu } B^t_{\mu l \to \i l} =
     \frac{1}{\sqrt{N}}\sum_\mu {\hat{f}}_{\mu i \to \mu l}(t)  g_{\rm out}(\omega^t_{\mu i l},
    y_{\mu l}, V^t_{\mu i l}) \, ,
\eea 
where we used definition of $B^t_{\mu l \to \i l} $ in eq.~(\ref{eq:B}).
The message ${\hat{f}}_{\mu i \to \mu l}(t)$ are uncorrelated with all the
other incoming messages and also with all the $F_{\mu j}^0$ for $j\neq
i$. It is, however, correlated with $F_{\mu i}^0$ and the dependence
on $F_{\mu i}^0$ has to be hence treated separately. After an
expansion in the leading order we obtain
\bea
 T^t_{il} /\Sigma^t_{il} &=& 
    X^0_{il}  \frac{1}{\sqrt{N}} \sum_{\mu} {\hat{f}}_{\mu i \to \mu l}(t)  \, F^0_{\mu i}\,  \partial_z g_{\rm out}(\omega^t_{\mu i l},
    h( z_{\mu l} ,w_{\mu l}), V^t_{\mu i l}) +  \frac{1}{\sqrt{N}}\sum_\mu {\hat{f}}_{\mu i \to \mu l}(t) \,  g_{\rm out}(\omega^t_{\mu i l},
    h( \sum_{j(\neq i)} F^0_{\mu j} X^0_{jl}  ,w), V^t_{\mu i l})
    \nonumber \\
   &\approx & \alpha X^0_{il}  m^t_F \hat m^t    +  {\cal N}(0,1)
   \sqrt{ \alpha q^t_F \hat q^t} \, ,\label{eq:DE_TS}
\eea
where in the first term we defined the new parameter $\hat m$
as 
\be
     \hat m^t =  \frac{1}{M} \mathbb{E}_{F^0,X^0,w^0}\left[ \sum_\mu  \partial_z
        g_{\rm out}(\omega_{\mu i l}^t,h( z_{\mu l} ,w_{\mu
          l}),V^t_{\mu i l})
      \right]\, . \label{eq:def_mhat}
\ee
Using the same kind of analysis as we did for $\hat q$ and $\hat
\chi$, we find that
\be
 \hat m^t =  \int {\rm d}w\,  P_0(w) \int {\rm d}p \,
     {\rm d}z \, {\cal N}[p,z; q^t_F q^t_X,\langle (z^0)^2 \rangle, m^t_F m^t_X
     ] \, \partial_z g_{\rm out}[p,h(z,w),Q_F^t Q_X^t - q_F^t q_X^t] \, , \label{eq:mhat}
\ee
 
The second term of (\ref{eq:DE_TS}) when averaged over realization of
$F^0$, $X^0$ and $w^0$ behaves as a Gaussian random variable. In
(\ref{eq:DE_TS}) we moreover assumed
that  
\be
   \mathbb{E}_{F^0,X^0,w^0} \left[ \frac{1}{M} \sum_{\mu}  g_{\rm out}(\omega^t_{\mu i l},
    h( \sum_{j(\neq i)} F^0_{\mu j} X^0_{jl}  ,w), V^t_{\mu i l})  \right] =  0 \, ,   \label{eq:zero_m}
\ee
which is true in all the special cases analysed in this paper, and is also
true in general under the Bayes-optimal inference as detailed in the
next section. 
If the zero-mean assumption  (\ref{eq:zero_m}) did not hold the density evolution
equations would contain additional terms (similarly as if both $F$ and
$X$ had non-zero means), see e.g. the state evolution in compressed
sensing for non-zero mean matrices \cite{CaltagironeKrzakala14}. Under the zero-mean assumption, the variance
of the Gaussian variable is $\alpha q^t_F \hat q^t$ with $\hat q^t$
given by (\ref{eq:qhat}). 

Analogously we have 
\be
   W^t_{\mu i} /Z^t_{\mu i}  \approx \pi \sqrt{N} F^0_{\mu i}  m^t_X \hat m^t
   +  {\cal N}(0,1)
   \sqrt{ \pi q^t_X \hat q^t} \ . \label{eq:DE_WZ}
\ee
With the use of (\ref{eq:DE_S}-\ref{eq:DE_S}),
(\ref{eq:DE_TS}), (\ref{eq:DE_TS}), and the expressions for
messages (\ref{gamp_ac}-\ref{gamp_rs}) we obtain 
\bea
         Q^{t+1}_X - q_X^{t+1} &=& \int {\rm d}X^0 P_{X^0}(X^0)  \int
         {\cal D}\xi \,  \,  f_c\left[  \frac{1}{\alpha Q_F^t \hat \chi^t -
             \alpha(Q_F^t -q_F^t) \hat q^t}  , \frac{\alpha m_F^t
             \hat m^t \,  X^0+ \xi \sqrt{\alpha q_F^t \hat q^t}}{\alpha Q_F^t \hat \chi^t -
             \alpha(Q_F^t -q_F^t) \hat q^t}   \right] \, ,\label{eq:DE_Qx} \\
         q_X^{t+1} &=& \int {\rm d}X^0 P_{X^0}(X^0)  \int
         {\cal D}\xi \, \,  f_X^2\left[  \frac{1}{\alpha Q_F^t \hat \chi^t -
             \alpha(Q_F^t -q_F^t) \hat q^t}  , \frac{\alpha m_F^t
             \hat m^t\,  X^0 + \xi \sqrt{\alpha q_F^t \hat q^t}}{\alpha Q_F^t \hat \chi^t -
             \alpha(Q_F^t -q_F^t) \hat q^t}   \right] \, ,\label{eq:DE_qx} \\
          m_X^{t+1} &=& \int {\rm d}X^0 P_{X^0}(X^0)  \int
         {\cal D}\xi \, \,   X^0 f_X\left[  \frac{1}{\alpha Q_F^t \hat \chi^t -
             \alpha(Q_F^t -q_F^t) \hat q^t}  , \frac{\alpha m_F^t
             \hat m^t \,  X^0 + \xi \sqrt{\alpha q_F^t \hat q^t}}{\alpha Q_F^t \hat \chi^t -
             \alpha(Q_F^t -q_F^t) \hat q^t}   \right] \, ,\label{eq:DE_mx} 
\eea
where ${\cal D}\xi= {\rm d}\xi e^{-\xi^2/2}/{\sqrt{2\pi}}$ is a
Gaussian integration measure. Analogously we have for the $F$-related order parameters 
\bea
         Q^{t+1}_F - q_F^{t+1} &=& \int {\rm d}F^0 P_{F^0}(F^0)  \int
         {\cal D}\xi \,  \,  f_s\left[  \frac{1}{\pi  Q_X^t \hat \chi^t -
             \pi (Q_X^t -q_X^t) \hat q^t}  , \frac{\pi m_X^t
             \hat m^t \,  \sqrt{N}  F^0+ \xi \sqrt{\pi  q_X^t \hat q^t}}{\pi  Q_X^t \hat \chi^t -
             \pi (Q_X^t -q_X^t) \hat q^t}   \right] \, ,\\
         q_F^{t+1} &=& \int {\rm d}F^0 P_{F^0}(F^0)  \int
         {\cal D}\xi \,  \,  f_F^2\left[  \frac{1}{\pi  Q_X^t \hat \chi^t -
             \pi (Q_X^t -q_X^t) \hat q^t}  , \frac{\pi m_X^t
             \hat m^t\,  \sqrt{N} F^0 + \xi \sqrt{\pi  q_X^t \hat q^t}}{\pi  Q_X^t \hat \chi^t -
             \pi (Q_X^t -q_X^t) \hat q^t}   \right] \, ,\label{eq:DE_qF}  \\
          m_F^{t+1} &=& \int {\rm d}F^0 P_{F^0}(F^0)  \int
         {\cal D}\xi \, \,  \sqrt{N} F^0 f_F\left[  \frac{1}{\pi  Q_X^t \hat \chi^t -
             \pi (Q_X^t -q_X^t) \hat q^t}  , \frac{\pi m_X^t
             \hat m^t \,  \sqrt{N} F^0 + \xi \sqrt{\pi  q_X^t \hat q^t}}{\pi  Q_X^t \hat \chi^t -
             \pi (Q_X^t -q_X^t) \hat q^t}   \right] \, .\label{eq:DE_mF} 
\eea
The six equations (\ref{eq:DE_Qx}-\ref{eq:DE_mx}) together with (\ref{eq:chihat}-\ref{eq:mhat}) are
the general form of density evolution for GAMP in the general case of matrix
factorization. We remind that these equations describe the asymptotic 
evolution of the algorithm in the ``thermodynamic'' limit of large sizes, as long as the matrices $X$, $X^0$, $F$,
$F^0$ were generated with iid elements, at least one of the random
variables $X$ and
$F$, and at least one of the $X^0$ and $F^0$ has zero mean, and the
output channel satisfies the condition (\ref{eq:zero_m}). The first of
these condition is absolutely essential for our approach;  the
restriction to zero means is here for convenience, the non-zero means and
generic form of output function can be treated with the same formalism
that we have used here, with
additional terms.

\subsubsection{State evolution of the Bayes-optimal inference}
\label{sec:AA_NL}

To satisfy the Nishimori identities we have to suppose that all the prior
distributions are matching the true distributions from which the
signal and noise were generated, i.e. conditions~(\ref{eq:P=P0}) hold.
The condition $P^0_{\rm out}(y|z) = P_{\rm out}(y|z)$ is equivalent to $P_0(w)=P(w)$ and $h(z,w)=h^0(z,w)$.
In this Bayes-optimal setting, (\ref{eq:P=P0}),  the asymptotic analysis simplifies considerably
since we have 
\bea
       q^t_X &=& m^t_X   \, , \quad \quad      q^t_F = m^t_F \, ,   \label{eq:NL_qm}  \\
     Q^t_X&=& \mathbb{E}[ (X^0)^2]      \, , \quad \quad             Q^t_F =
     N  \mathbb{E}[ (F^0)^2]  \, ,
     \label{eq:NL_Q}  \\
     \hat \chi^t &=& \hat q^t = \hat m^t \, . \label{eq:NL_chi} 
\eea
To justify the above statement we need to prove that if (\ref{eq:P=P0}) is
satisfied and (\ref{eq:NL_qm}-\ref{eq:NL_chi}) hold up to iteration $t$ then (\ref{eq:NL_qm}-\ref{eq:NL_chi}) hold also in
iteration $t+1$. This is done in the next two subsections. 

In the Bayes-optimal setting, the state evolution simplifies into
\bea
           m_X^{t+1} &=& \int {\rm d}X P_{X}(X)  \int
         {\cal D}\xi \, \,  f_X^2\left[  \frac{1}{
             \alpha m_F^t \hat m^t}  , \frac{\alpha m_F^t
             \hat m^t\,  X + \xi \sqrt{\alpha m_F^t \hat m^t}}{
             \alpha m_F^t \hat m^t}   \right] \, ,\label{eq:mx_NL} \\
           m_F^{t+1} &=& \int {\rm d}F P_{F}(F)  \int
         {\cal D}\xi \,  \,  f_F^2\left[  \frac{1}{
             \pi m_X^t \hat m^t}  , \frac{\pi m_X^t
             \hat m^t\,  \sqrt{N} F + \xi \sqrt{\pi  m_X^t \hat m^t}}{
             \pi m_X^t \hat m^t}   \right] \, ,\label{eq:mF_NL} \\ 
             \hat m^t &=& - \int {\rm d}w \, P(w)  \int
         {\rm d} p\,  {\rm
               d}z   \frac{  e^{-\frac{p^2}{2m_F^t m_X^t}}  e^{-\frac{(z-p)^2}{2[ \langle(z^0)^2 
             \rangle - m_F^t m_X^t]}} }{2\pi \sqrt{ m_F^t m_X^t (   \langle(z^0)^2 
             \rangle - m_F^t m_X^t  ) }} \,    \partial_p g_{\rm out}(p,h(z,w),
             \langle(z^0)^2 
             \rangle - m_F^t m_X^t ) \, , \label{eq:mhat_NL} 
\eea
where $ {\cal D}\xi $ is a Gaussian integral ${\rm d} \xi
e^{-\xi^2/2}/\sqrt{2\pi}$. 
Here we chose to
use the expression coming from eq. (\ref{eq:DE_qx}), (\ref{eq:DE_qF}) and (\ref{eq:chihat}),
but we
could have used any of the other expressions that are equivalent on
the Nishimori line. Where $m_F$ and $m_X$ are initialized as
squares of the means of the corresponding prior distributions
\be
     m_F^{t=0} = N \left[\int {\rm d}F \, F P(F)\right]^2\, ,  \quad \quad   m_X^{t=0} = \left[ \int
     {\rm d}X \, X P(X) \right]^2\, .  \label{eq:DE_NL_init}
\ee
In case the prior distribution depends on another random variables,
e.g. in case of matrix calibration, we take additional average with
respect to that variable. 
If the above initialization gives $m_F^{t=0}=0$ and $m_X^{t=0}=0$ then
this is a fixed point of the state evolution. This is due to the
permutational symmetry between the columns of matrix $F$ and rows of matrix $X$. To obtain a nontrivial
fixed point we initialize at $m_F^{t=0}=\eta$ for some very small
$\eta$, corresponding to an infinitesimal prior information about the
matrix elements of the matrix $F$. Note that this is needed only in
the state evolution, the algorithm breaks the permutational symmetry
spontaneously. The same situation appears in an Ising ferromagnet at
low temperature where zero magnetization is a fixed point of the
equilibrium equations, but the physically correct solution to which
dynamical procedures converge had large magnetization in absolute
value.

Our general strategy in the asymptotic analysis of optimal Bayesian
inference and related phase transition is that the corresponding fixed
points must satisfy the Nishimori identities, hence we will restrict
our search for fixed points to parameters lying on the Nishimori line,
i.e. satisfying the identities (\ref{eq:NL_qm}-\ref{eq:NL_chi}). When
these identities are not imposed the iterations of the state evolution
equations are not always converging to fixed points on the Nishimori
line. This is also reflected in problems with convergence in the GAMP
algorithm for matrix factorization. In the algorithm the Nishimori
identities are unfortunately not straightforward to impose.

\subsubsection{The input Nishimori identities}
\label{app:Nish}

Assume that in the state evolution the Nishimori identities (\ref{eq:NL_qm}-\ref{eq:NL_chi})
hold for all iteration times smaller or equal to $t$. Out aim is to
show that then $m_X^{t+1}$ and $q_X^{t+1}$ computed from (\ref{eq:DE_mx}) and
(\ref{eq:DE_qx}) are equal. 
Recall that
\bea
m_X^{t+1} &=& \int {\rm d}X_0 P_{X}(X_0)  \int
{\cal D}\xi \, \,  X_0 f_X\left[  \frac{1}{
    \alpha m_F^t \hat m^t}  , \frac{\alpha m_F^t
    \hat m^t\,  X_0 + \xi \sqrt{\alpha m_F^t \hat m^t}}{
    \alpha m_F^t \hat m^t}   \right] \, ,\\
q_X^{t+1} &=& \int {\rm d}X_0 P_{X}(X_0)  \int
{\cal D}\xi \, \,  f_X^2\left[  \frac{1}{
    \alpha m_F^t \hat m^t}  , \frac{\alpha m_F^t
    \hat m^t\,  X_0 + \xi \sqrt{\alpha m_F^t \hat m^t}}{
    \alpha m_F^t \hat m^t}   \right] \, ,\\
\eea
and the definition of 
\be
f_X(\Sigma^2,R) =\frac{\int {\rm d}X \, e^{-\frac{(X-R)^2}{2\Sigma^2}}
  X\, P_{X}(X)}{\int {\rm d}X\,  e^{-\frac{(X-R)^2}{2\Sigma^2}} P_{X}(X)}
=\frac{\int {\rm d}X \, e^{-\frac{X^2}{2\Sigma^2}+\frac{R}{\Sigma^2}X}
  X \, P_{X}(X)}{\int {\rm d}X \,
  e^{-\frac{X^2}{2\Sigma^2}+\frac{R}{\Sigma^2}X}  P_{X}(X)}\, .
\ee
Denoting $\tilde q=\alpha m_F^t \hat m^t$ we have:
\bea
m_X^{t+1} &=& 
\int {\rm d}X_0 \, P_{X}(X_0)  \int{\cal D}\xi \, \,  X_0
\frac{\int {\rm d}X\,  e^{-\frac{\tilde q X^2}{2}+ ( \tilde q X_0 +\xi
    \sqrt{\tilde q}  )X }
  X\, P_{X}(X)}{\int {\rm d}X\,  e^{-\frac{\tilde q X^2}{2}+( \tilde q X_0 +\xi
    \sqrt{\tilde q} )X }  P_{X}(X)}\, ,
\\
q_X^{t+1} &=& 
\int {\rm d}X_0 P_{X}(X_0) \int{\cal
 D}\xi \, \,  \left[ \frac{\int {\rm d}X \, e^{-\frac{\tilde q X^2}{2}+ (  \tilde q X_0 +\xi
    \sqrt{\tilde q}  )X }
  X\, P_{X}(X)}{\int {\rm d}X \,  e^{-\frac{\tilde q X^2}{2}+( \tilde q  X_0 +\xi
    \sqrt{\tilde q}  )X}  P_{X}(X)} \right]^2.
\eea
Performing the change of variables $\xi\sqrt{\tilde q}  + \tilde q X_0
\to \xi \sqrt{\tilde q} $ the
Gaussian measure become ${\cal D}\xi e^{-\frac {\tilde q X_0^2}2  +\xi
  X_0\sqrt{\tilde q}}$ so that
\be
m_X^{t+1} = \int{\cal D} \xi \int {\rm d}X_0 P_{X}(X_0)  e^{-\frac {\tilde q X_0^2}2
  +\xi X_0 \sqrt{\tilde q}}\,  X_0\, 
\frac{\int {\rm d}X\,  e^{-\frac{\tilde q X^2}{2}+ \xi
    \sqrt{\tilde q}  X }
  X P_{X}(X)}{\int {\rm d}X \,  e^{-\frac{\tilde q X^2}{2}+\xi
    \sqrt{\tilde q} X }  P_{X}(X)}
= \int{\cal D}\xi \frac{\left[\int {\rm d}X \,  e^{-\frac{\tilde q X^2}{2}+ \xi
    \sqrt{\tilde q}  X }  X P_{X}(X)\right]^2}{\int {\rm d}X\,  e^{-\frac{\tilde q X^2}{2}+\xi
    \sqrt{\tilde q} X }  P_{X}(X)}\, .
\ee
Analogously we obtain
\be
q_X^{t+1} = \int{\cal D}\xi \int {\rm d}X_0 P_{X}(X_0) e^{-\frac {\tilde q X_0^2}2
  +\xi\sqrt{\tilde q}X_0} \left[\frac{\int {\rm d}X\,  e^{-\frac{\tilde q X^2}{2}+ \xi
    \sqrt{\tilde q}  X }
 X\,  P_{X}(X)}{\int {\rm d}X \, e^{-\frac{\tilde q X^2}{2}+\xi
    \sqrt{\tilde q} X }  P_{X}(X)}\right]^2 = 
m_X^{t+1}\, .
\ee
The proof of $m_F^{t+1}= q_F^{t+1}$ is exactly the same. 

The next identity we want to prove is $Q_X^{t+1}= \mathbb{E}[ (X^0 )^2]$. From the general state evolution equation (\ref{eq:DE_Qx})
we get under conditions (\ref{eq:NL_qm}-\ref{eq:NL_chi}) that
\be
Q_X^{t+1} -  q_X^{t+1} = \int {\rm d}X_0 P_{X}(X_0)  \int
{\cal D}\xi \, \,  f_c\left[  \frac{1}{
    \alpha m_F^t \hat m^t}  , \frac{\alpha m_F^t
    \hat m^t\,  X_0 + \xi \sqrt{\alpha m_F^t \hat m^t}}{
    \alpha m_F^t \hat m^t}   \right]\, .
\ee
Using the definition of the function $f_c(\Sigma,R)$ (\ref{f_ac_gen}), the same change of variables, and resulting cancelations as above we get 
\be
Q_X^{t+1}  =  \int{\cal D}\xi \int {\rm d}X_0 \,  X_0^2\, P_{X}(X_0) e^{-\frac {\tilde q X_0^2}2
  +\xi\sqrt{\tilde q}X_0} =  \mathbb{E}[ (X^0 )^2 ]\, .
\ee
And analogously for $Q_F^{t+1}=N  \mathbb{E}[ (F^0 )^2 ]$.

\subsubsection{The output Nishimori identities}
\label{app:Nish_out}

Let us now assume that the input Nishimori identities
(\ref{eq:NL_qm}-\ref{eq:NL_Q}) are satisfied and we want to show that
(\ref{eq:NL_chi}) and (\ref{eq:zero_m}) hold. 

We depart from the general expressions
(\ref{eq:chihat}-\ref{eq:mhat}). We notice that for $q_F^t=m_F^t$ and
$q_X^t=m_X^t$ the joint Gaussian measure for variables $p$ and $z$ in
(\ref{eq:qq}-\ref{eq:zz}) can be written as a product of two Gaussian
measures. We have in that case $ \mathbb{E} [\omega_{\mu l } (\omega_{\mu l} - z^0_{\mu l})
] = 0$, hence one of the Gaussian has zero mean and variance $m_F^t
m_X^t$ and the other one mean $p$ and variance $V^t=\langle (z^0)^2 \rangle-m_F^t
m_X^t$. Furthermore, performing the integration over variable
$z$ by parts in eq. (\ref{eq:mhat}) and then using the relation between
$h(z,w)$ and $P_{\rm out}(y|z)$ from eq. (\ref{eq:hPout}) and the definition of
$g_{\rm out}$ from eq. (\ref{eq:def_gout}) we get 
\be
       \hat m^t = \int {\rm d}y \, {\rm d}p\,  {\rm d}z\,  P_{\rm out}(y|z)
     \,   {\cal N}(p,z)  \frac{z-p}{V^t}   \frac{ \int {\rm
           d}z'  P_{\rm out}(y|z') (z'-p)  e^{-\frac{(z'-p)^2}{2V^t}}   }{ V^t \int {\rm
           d}z''  P_{\rm out}(y|z'')  e^{-\frac{(z''-p)^2}{2V^t}}   }
       \, . 
\ee
Doing analogous manipulations of expliciting the Gaussian measure and
using eq. (\ref{eq:hPout}) and the definition of $g_{\rm out}$ in equation (\ref{eq:qhat}) we obtain 
\be
        \hat q^t = \hat m^t \, . 
\ee
For $\hat \chi^t$ we do integration with respect to $p$ by parts and
using steps as in the above we obtain 
\be
     \hat \chi^t = - \frac{1}{m_F^t m_X^t} \int {\rm d}y \, {\rm d}p\,
     {\rm d}z \, P_{\rm out}(y|z)\, 
       {\cal N}(p,z)  \, p \,   \frac{ \int {\rm
           d}z'  P_{\rm out}(y|z') (z'-p)  e^{-\frac{(z'-p)^2}{2V^t}}   }{ V^t \int {\rm
           d}z''  P_{\rm out}(y|z'')  e^{-\frac{(z''-p)^2}{2V^t}}   }
       + \hat q^t = \hat q^t\, .
\ee
Thanks to a cancelation between the integrals over variables $z$
and $z''$ we can perform explicitly the integral over $y$ (keeping in
mind that $P_{\rm out}(y|z')$ is a normalized probability
distribution). The remaining Gaussian integral is then zero.  

In a analogous manner we prove eq.~(\ref{eq:zero_m}) by noticing that
the expectation with respect to $F^0$, $X^0$ and $w^0$ is exactly the
integral $\int {\rm d}y \, {\rm d}p\,  {\rm d}z\,  P_{\rm out}(y|z) \,  {\cal N}(p,z)$.

To conclude, identities (\ref{eq:NL_chi}) and (\ref{eq:zero_m}) hold
in the limit $N\to \infty$. However, as common in statistical physics
we can recall the self-averaging property under which quantities on
almost every large ($N\to \infty$) instance are equal to their averages aver
randomness (disorder) of $F^0$, $X^0$ and $w^0$. This self-averaging
then for instance justifies the use of eq.~(\ref{eq:gout_NL}) on large single instances of
the matrix factorization problem.

\subsection{Replica method}
\label{Sec:repl}

The replica method is known as a non-rigorous approach to evaluate the typical performance
of various Bayesian inference problems.  We here show how this is employed for 
the matrix factorization problem. We show, as expected from other problems, that the 
result of the replica analysis is fully equivalent to the result of
the state evolution. 

\subsubsection{Moment assessment for $n \in \mathbb{N}$}
The expression of the partition function
\begin{eqnarray}
Z(Y)=\int {\rm d}F {\rm d}X \prod_{\mu, i} P_F(F_{\mu i}) \prod_{i,l} P_X(X_{i l}) \prod_{\mu, l}P_{\rm out}\((y_{\mu l}|\sum_{i}F_{\mu i}X_{il}\))
\label{partition_replica}
\end{eqnarray}
constitutes the basis of our analysis. In  statistical physics, 
one can generally examine properties of systems via evaluation of the free entropy
$\log Z(Y)$, which statistically fluctuates depending on the realization of $Y$ in the current case. 
However, as $N,M,P \to \infty$, one can expect that the 
self-averaging property holds and, therefore,
the free entropy density $N^{-2} \log Z(Y)$ converges to its typical value $\phi \equiv N^{-2} \left [ \log Z(Y) \right ]_Y$
with probability of unity. 
This is also expected to hold for other macroscopic quantities relevant to the performance
of the matrix factorization. Therefore, assessment of $\phi$ is the central issue in our analysis. 

This can be systematically carried out by the replica method. For this, we first evaluate the $n$-th moment of $Z(Y)$, 
$\left [ Z^n(Y) \right ]_Y=\int {\rm d}Y P_0(Y) Z^n(Y)$,  
for $n \in \mathbb{N}$ utilizing an identity
\begin{eqnarray}
Z^n(Y)=\int \prod_{a=1}^n\{{\rm d}F^a {\rm d}X^a \prod_{\mu, i} P_F(F_{\mu i}^a) \prod_{i,l} P_X(X_{i l}^a) \}
\times \{\prod_{\mu, l} \prod_{a=1}^n P_{\rm out}(y_{\mu l}|\sum_{i}F_{\mu i}^aX_{il}^a)\}, 
\label{replicated}
\end{eqnarray}
with respect to the generative distribution of $Y$
\begin{eqnarray}
P_0(Y)=\int {\rm d}F^0 {\rm d}X^0 
\prod_{\mu, i} P_{F^0}(F_{\mu i}^0) 
\prod_{i,l }P_{X^0}(X_{i l}^0) 
\prod_{\mu , l} P_{\rm out}^0(y_{\mu l}|\sum_{i}F_{\mu i}^0X_{il}^0), 
\label{generative_dist}
\end{eqnarray}
where we assumed that the functional forms of $P_{F^0}(F_{\mu i})$, $P_{X^0}(X_{il})$ and 
$P_{\rm out}^0(y_{\mu l}| \sum_{i}F_{\mu i}X_{il})$ may be different from 
those of the assumed model $P_F(F_{\mu i})$, $P_{X}(X_{il})$ and $P_{\rm out}(y_{\mu l}|\sum_{i}F_{\mu i}X_{il})$
for generality. When they are equal, which correspond to the
Bayes-optimal setting, $P_0(Y)=Z(Y)$ holds. 

In performing the integrals of $2(n+1)$ matrices $(F^0,\{F^a\}_{a=1}^n )$, $(X^0,\{X^a\}_{a=1}^n )$ and $Y$ that come out 
in evaluating $\left [ Z^n(Y) \right ]_Y$, we insert trivial identities with respect to all combinations of 
replica indices $a \le b=0,1,2,\ldots,n$
\begin{eqnarray}
1= M \int {\rm d}q_F^{ab} \delta \left ( F^{a} \cdot F^{b} - M q_F^{ab} \right )
\label{order_parameter_F}
\end{eqnarray}
and 
\begin{eqnarray}
1= NP \int {\rm d}q_X^{ab} \delta \left ( X^{a} \cdot X^{b} - NP q_X^{ab} \right )
\label{order_parameter_}
\end{eqnarray}
to the integrand, where $F^a \cdot F^b \equiv \sum_{ \mu , i} F_{\mu i}^a F_{\mu i}^b$ and similarly for $X^a \cdot X^b$. 
Let us denote ${\cal Q}_F\equiv (q_F^{ab})$ and ${\cal Q}_X \equiv (q_X^{ab})$, and introduce two joint distributions
\begin{eqnarray}
P_F(\{F^a\} ; {\cal Q}_F) =\frac{1}{V_F({\cal Q}_F)} \prod_{\mu, i}
\left (P_{F^0}(F_{\mu i}^0)  \prod_{a=1}^n P_F(F_{\mu i}^a) \right ) \prod_{a \le b} \delta \left ( F^{a} \cdot F^{b} - M q_F^{ab} \right )
\label{subshell_F}
\end{eqnarray}
and 
\begin{eqnarray}
P_X(\{X^a\} ; {\cal Q}_X) =\frac{1}{V_X({\cal Q}_X)} \prod_{ i,l}
\left (P_{X^0}(X_{il}^0)  \prod_{a=1}^n P_X(X_{il}^a) \right ) \prod_{a \le b} \delta \left ( X^{a} \cdot X^{b} - NP q_X^{ab} \right ), 
\label{subshell_X}
\end{eqnarray}
where $V_F({\cal Q}_F)$ and $V_X({\cal Q}_X)$ are the normalization constants. These yield an expression of $\left [ Z^n(Y) \right ]_Y$ as
\begin{eqnarray}
&&
\left [ Z^n(Y) \right ]_Y =\int {\rm d}Y {\rm d}(M {\cal Q}_F) {\rm d}(NP {\cal Q}_X) 
\left \{V_F({\cal Q}_F)V_X({\cal Q}_X) \right . \cr
&& \left .  \hspace*{5cm} \times 
\left [ \prod_{\mu , l} 
\left (P_{\rm out}^0(y_{\mu l}|\sum_{i}F_{\mu i}^0X_{il}^0) 
\prod_{a=1}^n P_{\rm out}(y_{\mu l}|\sum_{i}F_{\mu i}^aX_{il}^a) \right )\right ]_{{\cal Q}_F,{\cal Q}_X} \right \}, 
\label{new_moment}
\end{eqnarray}
where $\left [ \cdots \right ]_{{\cal Q}_F,{\cal Q}_X}$ denotes the average with respect to (\ref{subshell_F}) and (\ref{subshell_X}). 
In computing $\left [ \cdots \right ]_{{\cal Q}_F,{\cal Q}_X}$, it is noteworthy that 
$\{F^a\}$ and $\{X^a\}$ follow statistically independent distributions, and 
either of them has zero mean and both of them have finite variances from our assumption.   
These allow us to handle $z_{\mu l}^a \equiv \sum_{i}F_{\mu i}^a X_{\mu i}^a$ $(a=0,1,2,\ldots,n; 
\mu =1,2,\ldots, M; l=1,2,\ldots, P)$ as
multivariate Gaussian random variables whose distribution is given by 
\begin{eqnarray}
P_Z(\{z_{\mu l}^a \}|{\cal Q}_F,{\cal Q}_X)=\prod_{\mu, l} \frac{1}{\sqrt{(2\pi)^{n+1} \det {\cal T} }}
\exp \left (-\frac{1}{2} \sum_{a,b} z_{\mu l}^a \left ({\cal T}^{-1} \right )z_{\mu l}^b \right ), 
\end{eqnarray}
where ${\cal T}=(q_F^{ab} q_X^{ab}) \in \mathbb{R}^{(n+1) \times (n+1)}$. 
Employing this and evaluating the integrals of ${\cal Q}_F$ and ${\cal Q}_X$ by means of 
the saddle point method yield an expression
\begin{eqnarray}
\frac{1}{N^2}
\log \left [ Z^n(Y) \right ]_Y=
\mathop{\rm extr}_{{\cal Q}_F,{\cal Q}_X} 
\left \{\alpha \pi {\cal I}_{FX} ({\cal Q}_F,{\cal Q}_X) + \alpha {\cal I}_F({\cal Q}_F)+\pi {\cal I}_X({\cal Q}_X) \right \}, 
\label{logZn}
\end{eqnarray}
where 
\begin{eqnarray}
{\cal I}_{FX} ({\cal Q}_F,{\cal Q}_X) =\log \left (\int  \left (\int  
P_Z(\{z^a \}|{\cal Q}_F,{\cal Q}_X)
\left (P_{\rm out}^0 (y|z^0) \prod_{a=1}^n P_{\rm out} (y|z^a) \right )\prod_{a=0}^n 
{\rm d} z^a\right ){\rm d} y\right ), 
\end{eqnarray}
\begin{eqnarray}
{\cal I}_F({\cal Q}_F) &=& \frac{1}{NM}\log V_F({\cal Q}_F) \cr
&=&\mathop{\rm extr}_{\hat{\cal Q}_F} \left \{\frac{1}{2} {\rm Tr} \hat{\cal Q}_F {\cal Q}_F + \log \left (
\int  
P_{ F^0}(F^0)\prod_{a=1}^n P_F(F^a) \exp \left (-\frac{N}{2} \bF^{\rm T} \hat{\cal Q}_F \bF \right ) \prod_{a=0}^n {\rm d}F^a\right ) \right \}, 
\end{eqnarray}
\begin{eqnarray}
{\cal I}_X({\cal Q}_X) &=& \frac{1}{NP}\log V_X({\cal Q}_X) \cr
&=&\mathop{\rm extr}_{\hat{\cal Q}_X}\left \{ \frac{1}{2} {\rm Tr} \hat{\cal Q}_X {\cal Q}_X + \log \left (
\int  
P_{ X^0}(X^0)\prod_{a=1}^n P_X(X^a) \exp \left (-\frac{1}{2} \bX^{\rm T} \hat{\cal Q}_X \bX \right ) \prod_{a=0}^n {\rm d}X^a\right ) \right \}, 
\end{eqnarray}
and $\mathop{\rm extr}_{\Theta} \left \{ \cdots \right \}$ denotes the operation of extremization with respect to $\Theta$.  
$\hat{\cal Q}_F=(\hat{q}_F^{ab}) \in \mathbb{R}^{(n+1) \times (n+1)}$ are introduced for the saddle point evaluation of $V_{F}({\cal Q}_F)$
and $\bF=(F^a) \in \mathbb{R}^{n+1}$, and similarly for $\hat{\cal Q}_X\in \mathbb{R}^{(n+1)\times (n+1)}$ and $\bX =(X^a)\in \mathbb{R}^{n+1}$. 

\subsubsection{Replica symmetric free entropy}
Literally evaluating (\ref{logZn}) yields the correct leading order estimate of $N^{-2} \log [Z^n(Y) ]_{Y}$ for 
each $n \in \mathbb{N}$. However, we here restrict the candidate of the dominant saddle point to 
\begin{eqnarray}
(q_F^{ab}, q_X^{ab},\hat{q}_F^{ab}, \hat{q}_X^{ab}) = \left \{
\begin{array}{ll}
(Q_F^0, Q_X^0, \hat{Q}_F^0, \hat{Q}_X^0), & a=b=0, \cr
(Q_F,Q_X,\hat{Q}_F,\hat{Q}_X), & a=b \ (a,b \ne 0), \cr
(q_F,q_X,-\hat{q}_F,-\hat{q}_X), & a\ne b \ (a, b \ne 0), \cr
(m_F,m_X,-\hat{m}_F,-\hat{m}_X), & a=0, b\ne 0, 
\end{array}
\right .
\label{RSorderparameter} 
\end{eqnarray}
in order to obtain an analytic expression  with respect to $n \in
\mathbb{R}$. This restricted form is called the replica symmetric
ansatz, because the different non-zero replica-indices $a$ and $b$ play the
same role. It turns out that this ansatz is equivalent to the
assumptions made in the cavity method \cite{MezardParisi87b}. 
After some algebra utilizing 
a formula $\exp ( A \sum_{a < b} u^a u^b ) =\exp \left (-\frac{A}{2}\sum_{a} (u^a)^2\right )\int {\cal D} \xi \exp \left (\sqrt{A} \xi\sum_{a} u^a \right )$
for $A \ge 0$, this provides
\begin{eqnarray}
&&{\cal I}_{FX} ({\cal Q}_F,{\cal Q}_X) =\log \left (\int  {\rm d} y {\cal D}\xi \left (
\int {\cal D}u^0 P_{\rm out}^0 \left (y| \sqrt{Q_F^0 Q_X^0-\frac{m_F^2 m_X^2}{q_F q_X}} u^0 +
\frac{m_F m_X}{\sqrt{q_F q_X}} \xi \right ) \right . \right . \cr
&& \hspace*{5cm} \left . \left .  \times 
\left (\int {\cal D} u P_{\rm out} \left (y| \sqrt{Q_F Q_X-q_F q_X} u +\sqrt{q_F q_X} \xi \right ) \right )^n
\right )\right ), 
\end{eqnarray}
\begin{eqnarray}
&&{\cal I}_{F} ({\cal Q}_F )=\mathop{\rm extr}_{\hat{Q}_F^0, \hat{Q}_F,\hat{q}_F,\hat{m}_F} \left \{
\frac{\hat{Q}_F^0 Q_F^0}{2}+\frac{n\hat{Q}_FQ_F}{2}-\frac{n(n-1)\hat{q}_Fq_F}{2}-n\hat{m}_F m_F \right . \cr
&& \hspace*{2cm} \left . +\log \left (\int {\cal D}\xi  {\rm d}F^0e^{-\frac{N\hat{Q}_F^0}{2}(F^0)^2} P_{F^0}(F^0) 
\left (\int {\rm d}F e^{-\frac{N(\hat{Q}_F +\hat{q}_F)}{2} F^2 +(\sqrt{N\hat{q}_F} \xi +N\hat{m}_F F^0) F} P_F(F)\right )^n \right )  \right \}, 
\end{eqnarray}
and 
\begin{eqnarray}
&&{\cal I}_{X} ({\cal Q}_X )=\mathop{\rm extr}_{\hat{Q}_X^0, \hat{Q}_X,\hat{q}_X,\hat{m}_X} \left \{
\frac{\hat{Q}_X^0 Q_X^0}{2}+\frac{n\hat{Q}_XQ_X}{2}-\frac{n(n-1)\hat{q}_Xq_X}{2}-n\hat{m}_X m_X \right . \cr
&& \hspace*{2cm} \left . +\log \left (\int {\cal D}\xi  {\rm d}X^0e^{-\frac{\hat{Q}_X^0}{2}(X^0)^2} P_{X^0}(X^0) 
\left (\int {\rm d}X e^{-\frac{\hat{Q}_X +\hat{q}_X}{2} X^2 +(\sqrt{\hat{q}_X} \xi +\hat{m}_X X^0) X} P_X(X)\right )^n \right )  \right \}, 
\end{eqnarray}
all of which are analytic with respect to $n \in \mathbb{R}$. 
Substituting these into an identity $N^{-2} \left [ \log Z(Y) \right ]_Y=\lim_{n\to 0} $ $\frac{\partial }{\partial n} N^{-2} 
\log \left [Z^n (Y) \right ]_Y$ leads to the general expression of the free entropy of the matrix factorization problems. 
In this, $Q_F^0=N\int dF^0 (F^0)^2 P_{F^0}(F^0)=N \mathbb{E}[ (F^0)^2 ]$, 
$Q_X^0=\int {\rm d}X^0 (X^0)^2 P_{X^0}(X^0)= \mathbb{E}[ (X^0)^2 ]$, $\hat{Q}_F^0=0$, and $\hat{Q}_X^0=0$ are enforced 
for the consistency of $\lim_{n\to 0} \left [Z^n (Y) \right ]_Y =1$. After taking these into account, the expression becomes as
\begin{align}
&\phi=\frac{1}{N^2} \left [ \log Z(Y) \right ]_Y \cr
&= 
{\rm extr}\left \{ \alpha \pi \int {\rm d}y \,  {\cal D} \xi \, 
{\cal D}u^0 P_{\rm out}^0 \left (y| \sqrt{Q_F^0 Q_X^0-\frac{m_F^2 m_X^2}{q_F q_X}} u^0 +
\frac{m_F m_X}{\sqrt{q_F q_X}} \xi \right )
\log \left (\int {\cal D} u \, P_{\rm out} \left (y| \sqrt{Q_F Q_X-q_F q_X} u +\sqrt{q_F q_X} \xi \right ) \right ) \right . \cr
&+ \alpha \left (\frac{\hat{Q}_F Q_F}{2}+ \frac{\hat{q}_F q_F}{2}-\hat{m}_F m_F +
\int {\cal D}\xi \,  {\rm d}F^0
P_{F^0}(F^0)
\log\left (\int {\rm d}F e^{-\frac{N(\hat{Q}_F+\hat{q}_F)}{2}F^2+(\sqrt{N\hat{q}_F} \xi+N\hat{m}_F F^0 )F} P_F(F)\right )  \right ) \cr
&\left . + \pi \left (\frac{\hat{Q}_X Q_X}{2}+ \frac{\hat{q}_X q_X}{2}-\hat{m}_X m_X +
\int {\cal D}\xi \,  {\rm d}X^0
P_{X^0}(X^0)
\log\left (\int {\rm d}X e^{-\frac{\hat{Q}_X+\hat{q}_X}{2}X^2+(\sqrt{\hat{q}_X} \xi+\hat{m}_X X^0 )X} P_X(X) \right )  \right ) \right \}, 
\label{generalRSfreeenergy}
\end{align}
where the extremization with respect to
$Q_F,Q_X,q_F,q_X,m_F,m_X$, and their conjugate variables leads
to the same equations we obtained in the state evolution. Particularly
extremization w.r.t. the six conjugate variables gives
eqs.~(\ref{eq:DE_Qx}-\ref{eq:DE_mF}). Extremization with respect to
$Q_F,Q_X,q_F,q_X,m_F,m_X$ gives $\hat Q_F = \pi Q_X (\hat \chi-\hat q)$, $\hat Q_X
= \alpha Q_F (\hat \chi -\hat q)$, $\hat q_F = \pi q_X \hat q$, $\hat q_X = \alpha
q_F \hat q$, $\hat m_F = \pi m_X \hat m$, $\hat m_X = \alpha m_F \hat
m$ where $\hat \chi$, $\hat q$, and $\hat m$ are given byt
eqs.~(\ref{eq:chihat}-\ref{eq:qhat}) and (\ref{eq:mhat}). 

\subsubsection{Simplification in the Bayes-optimal setting}
One can generally evaluate thermodynamically dominant values of $Q_F,Q_X,q_F,q_X,m_F,m_X$ by solving the extremization 
problem of (\ref{generalRSfreeenergy}), which is involved with twelve variables including the conjugate variables
and therefore is rather complicated to handle. However, the problem is
significantly simplified in the Bayes-optimal setting. 

This is because the Nishimori identities 
allows us to handle $F^0=(F_{\mu i}^0)$ and $X^0=(X_{il}^0)$ as if 
they were the $n+1$-st replica variables added to the 
$n$-replicated system composed of $\{F^a\}_{a=1}^n=\{(F_{\mu i}^a)\}_{a=1}^n$ and $\{X^a\}_{a=1}^n=\{(X_{il}^a)\}_{a=1}^n$ 
in the computation of the moment 
 $\left [Z^n(Y) \right ]_Y=\int dY P_0(Y) 
Z^n(Y) =\int dY P_0^{n+1}(Y)$.  
The replica symmetry among $a=0,1,2,\ldots,n$ ensures the following properties:
\begin{itemize}
\item $m_F=q_F$, $m_X=q_X$, $\hat{m}_F=\hat{q}_F$, and $\hat{m}_X=\hat{q}_X$ are satisfied. 
\item $Q_F=Q_F^0$, $Q_X=Q_X^0$, $\hat{Q}_F=0$, and $\hat{Q}_X=0$ hold for $n\to 0$. 
\end{itemize}
Substituting these into (\ref{generalRSfreeenergy}) yields a simplified expression of the free entropy, which is involved with {\em only}
four macroscopic variables  
$m_F$, $m_X$, $\hat{m}_F$, and $\hat{m}_X$,  as
\begin{eqnarray}
\phi&=&\frac{1}{N^2} \left [ \log Z(Y) \right ]_Y =\frac{1}{N^2}\int dY P_0(Y) \log P_0(Y) \cr
&=& {\rm extr}\left \{ \alpha \pi \int {\rm d}y\,  {\cal D} \xi \, 
{\cal D}u^0 P_{\rm out} \left (y| \sqrt{Q_F^0 Q_X^0-m_F m_X} u^0 +{\sqrt{m_F m_X}} \xi \right )
\log \left (\int {\cal D} u \, P_{\rm out} \left (y| \sqrt{Q_F^0 Q_X^0-m_F m_X} u +\sqrt{m_F m_X} \xi \right ) \right ) \right . \cr
&&\hspace*{1cm}+ \alpha \left (- \frac{\hat{m}_F m_F}{2}+
\int {\cal D}\xi\,  {\rm d}F^0e^{-\frac{N\hat{m}_F}{2}(F^0)^2+\sqrt{N\hat{m}_F}\xi F^0} P_{F}(F^0)
\log\left (\int {\rm d}F e^{-\frac{N\hat{m}_F}{2}F^2+\sqrt{N\hat{m}_F} \xi F} P_F(F)\right )  \right ) \cr
&&\hspace*{1cm}\left . + \pi \left (- \frac{\hat{m}_X m_X}{2}+
\int {\cal D}\xi \, {\rm d}X^0
e^{-\frac{\hat{m}_X}{2}(X^0)^2+\sqrt{\hat{m}_X}\xi X^0}P_{X}(X^0)
\log\left (\int {\rm d}X e^{-\frac{\hat{m}_X}{2}X^2+\sqrt{\hat{m}_X} \xi X} P_X(X) \right )  \right ) \right \}, 
\label{NishimoriRSfreeenergy}
\end{eqnarray}
where we changed integration variables as 
$\sqrt{N\hat{m}_F}\xi +N\hat{m}_F F^0 \to \sqrt{N\hat{m}_F}\xi$ 
and 
$\sqrt{\hat{m}_X}\xi +\hat{m}_X X^0 \to \sqrt{\hat{m}_X}\xi$ 
together with 
${\cal D}\xi \to {\cal D}\xi e^{-\frac{N\hat{m}_F}{2} (F^0)^2 +\sqrt{N\hat{m}_F} \xi F^0}$
and 
${\cal D}\xi \to {\cal D}\xi e^{-\frac{\hat{m}_X}{2} (X^0)^2 +\sqrt{\hat{m}_X} \xi X^0}$, 
respectively. 
The saddle point conditions can be summarized via equations that we
obtained in the state evolution, notably eqs.~(\ref{eq:mx_NL}-\ref{eq:mhat_NL}) with $\hat m_F = \pi m_X \hat m$, $\hat m_X = \alpha m_F \hat
m$. 

The free entropy (\ref{NishimoriRSfreeenergy}) would be also obtained
from an asymptotic limit of the Bethe free entropy of section
\ref{Sec:Bethe}. Overall, as usual in statistical physics, the cavity method and
the replica method yield equivalent results for the matrix
factorization problem. 

\section{Examples of asymptotic phase diagrams}
\label{sec:phase}

In this section we use the state evolution derived in section
\ref{sec:AA_NL} to analyze the asymptotic MMSE of the Bayes-optimal inference in matrix factorization
for applications listed in section \ref{sec:examples}. We restrict our  analysis to  the Bayes-optimal inference,
i.e. the case where we generate the data as specified in
Sec.~\ref{sec:examples} and assume that we know the corresponding distributions. In terms of the AMP
algorithm and the state evolution this means we can use all the
simplifications that arise under the Nishimori identities. The AMP algorithms for matrix factorization and the asymptotic
analysis derived in sections~\ref{sec:AMP} and \ref{Sec:AA} apply to
all the examples, the only elements that are application-dependent
are the ``input" functions $f_X$ and $f_F$, and the output
function $g_{\rm out}$.

\subsection{Dictionary learning, blind matrix calibration, sparse PCA
  and blind source separation}
\label{sec:DL_phase}

In terms of our asymptotic analysis the equations for dictionary
learning, sparse PCA, and blind source separation are very close, see definitions in section \ref{sec:examples}, these problems basically
differ by the region of parameters $\alpha, \pi$ that is of interest. 
Moreover the dictionary learning can be seen as the $\eta \to \infty$
limit of the blind calibration problem (which is trivially taken in
the equations). We hence group the discussion of these problems in
the present section, they all present the first order phase transition
as low measurement noise. 

\subsubsection{Input and output functions}

The matrices $F$ and $X$ in our setting of the dictionary learning and
sparse PCA problems are generated according to
eqs.~(\ref{eq:DL_PF}-\ref{eq:DL_PX}). Using the definitions of the input
function $f_X$ in eq.~(\ref{f_ac_gen}), and $f_F$ in
eq.~(\ref{f_rs_gen}) we obtain explicitly: \be f_F(Z,W) =
\frac{W}{1+Z}\, , \quad \quad f_X(\Sigma,T) =\frac{ \rho\,
  e^{-\frac{(T-\overline X)^2}{2(\Sigma+\sigma)}}
  \frac{\sqrt{\Sigma}}{(\Sigma+\sigma)^{\frac{3}{2}}} (\overline X
  \Sigma + T \sigma) }{ (1-\rho) e^{-\frac{T^2}{2\Sigma}} + \rho
  \frac{\sqrt{\Sigma}}{\sqrt{\Sigma+\sigma}} e^{-\frac{(T-\overline
      X)^2}{2(\Sigma+\sigma)}} } \, , \label{eq:f_ar} \ee The
functions $f_c(\Sigma,T)$ and $f_s(Z,W)$ are then obtained from
eqs.~(\ref{eq:deriv_fa}-\ref{eq:deriv_fr}).

In case of matrix calibration we have some prior knowledge on the
matrix $F$ given by eq.~(\ref{eq:PF_cal}). This leads to a function $f_F$ of
the form 
\be f_F(Z,W) =
\frac{W + \sqrt{N} F_{\mu i}'
  \frac{\sqrt{1+\eta}}{\eta}}{Z +1+ \frac{1}{\eta}} \, . 
\ee
Indeed as the uncertainty in the matrix $\eta \to \infty$ this goes to
the $f_F$ for dictionary learning or sparse PCA (\ref{eq:f_ar}). 

The output function $g_{\rm out}$ defined in eq.~(\ref{eq:DL_Pout}) is,
for the output channel (\ref{eq:def_gout}) with 
additive white Gaussian noise of variance~$\Delta$:
\be
     g_{\rm out}(\omega,y,V)  = \frac{y - \omega}{ \Delta + V }\, . \label{eq:out_GN}
\ee 
For such a simple output function the
eqs.~(\ref{eq:chihat}-\ref{eq:mhat}) in the density evolution simplify
greatly into 
\be
\hat \chi^t = \hat m^t = \frac{1}{\Delta + Q_F^t Q_X^t -q_F^t q_X^t}
\, , \quad \quad  
\hat q^t = \frac{\Delta_0 +\rho_0 (\overline X_0^2 + \sigma_0)+ q_F
  q_X -2 m_F m_X} {(\Delta + Q_F^t Q_X^t -q_F^t q_X^t)^2}\, ,
\ee 
which under the simplification of the Nishimori line gives
\be
   \hat \chi^t =\hat q^t =\hat m^t = \frac{1}{\Delta + \rho (\overline
     X^2 + \sigma) -m_F^t m_X^t} \, . \label{eq:mhat_DL}
\ee
This is the only equation in the state evolution that explicitly
depends on the variance of the measurement noise $\Delta$. 
Also eqs.~(\ref{eq:DE_Qx}-\ref{eq:DE_mF}) simplify for distributions
(\ref{eq:DL_PF}) and (\ref{eq:PF_cal}) and using the Nishimori identities
they reduce to a
pair of equations 
\bea
m^{t+1}_F &=& \frac{\frac{1}{\eta}+ \pi m_X^t \hat m^t}{
  (1+\frac{1}{\eta}) +  \pi m_X^t \hat m^t} \, , \label{eq:mF}\\  
     m^{t+1}_X &=& (1-\rho) \int {\cal D}z  \,  f_X^2\left(\frac{1 }{\alpha
       m_F^t \hat m^t}, z
     \frac{1}{\sqrt{\alpha
       m_F^t \hat m^t } }   
\right)  + \rho \int {\cal D} z  \, f_X^2 \left(
   \frac{1 }{\alpha m_F^t \hat m^t}, z
     \frac{\sqrt{\alpha
       m_F^t \hat m^t+ 1  }}{\sqrt{\alpha
       m_F^t \hat m^t }}  \right)\, . \label{eq:mx}
\eea
Note that indeed only eq.~(\ref{eq:mF}) depends on the matrix
uncertainty parameter $\eta$, and the dictionary learning limit
$\eta\to \infty$ is straightforward. The MMSE of the matrix $F$, defined in (\ref{MSE_F_def}), predicted by
this state evolution is then $E_F$. The MMSE of the matrix
$X$, defined in (\ref{MSE_X_def}), is found equal to $E_X$ 
\be
E_F= 1 - m_F\, ,  \quad \quad E_X=  \rho( \overline X^2 + \sigma ) - m_X\, .
\ee
The two sets of initial conditions that we will
analyze to investigate the MMSE and the associated phase transitions are
\begin{itemize}
   \item Random (uninformative) initialization: $m^{t=0}_X=0$, and
     $m^{t=0}_F=1/(1+\eta)$. Note that in the limit of dictionary
     learning $\eta \to \infty$ this initialization corresponds to a
     fixed point of the state evolution equations. 
     This fixed point reflects the $N\!$ permutational symmetry in the
     dictionary learning problem, and its instability corresponds to a
     spontaneous breaking of this symmetry. In the limit of dictionary
     learning we will hence initialize the state evolution with
     $m^{t=0}_F$ being a very small positive constant, and we will see
     the behavior will not depend on its precise value. 
   \item Planted (informative) initialization: $m^{t=0}_X=\rho(
     \overline X^2 + \sigma ) - \delta_X$, and
     $m^{t=0}_F=1-\delta_F$, where $\delta_X$ and $\delta_F$ are small positive
     constants to test the ``stability" of the zero MMSE point. 
\end{itemize}

The free entropy density from which we compute the limiting performance
of the Bayes-optimal inference in case of a first order phase
transition is expressed from (\ref{NishimoriRSfreeenergy}) as follows.
For simplicity from now on (till the end of this section \ref{sec:DL_phase}) we
analyze only the case where the mean of the elements of $X^0$ was
zero, $\overline X=0$, and the variance of the nonzero ones was one, $\sigma=1$) 
 \bea
  && \phi(m_X,m_F) = - \frac{\alpha \pi}{2} + \frac{\alpha \pi}{2}
     \log{(\Delta+\rho - m_X m_F)}   + \alpha  \pi  \frac{\Delta+\rho}{\Delta+\rho - m_X m_F} 
 \nonumber\\ &-& \pi (1-\rho) \int
    {\cal D}z \, \log{\left[  1 -\rho + \frac{\rho}{\sqrt{\alpha m_F \hat m +1}}
        e^{\frac{z^2 \alpha m_F \hat m}{2(\alpha m_F \hat m +1)}} \right]} -  \pi \rho \int
    {\cal D}z \, \log{\left[  1 -\rho + \frac{\rho}{\sqrt{\alpha m_F \hat m +1}}
        e^{\frac{z^2 \alpha m_F \hat m}{2}} \right]}
      \nonumber\\ &-& \frac{\alpha}{2} \left[ \pi m_X \hat m -
        \log{\left(1+\frac{\pi m_X \hat m }{ 1+ \frac{1}{\eta}
            }\right)}   \right] \, ,\label{eq:Bethe_DL}
\eea
where $\hat m$ is given by eq.~(\ref{eq:mhat_DL}). The dependence on
the matrix uncertainty is only in the last term and the limit of the
completely unknown matrix $F$ is easily taken by $\eta\to \infty$.

The simple Eqs.~(\ref{eq:mhat_DL})-(\ref{eq:Bethe_DL}) is all we
need at the end to analyze the MMSEs $E_X$ and $E_F$ of dictionary learning, blind matrix
calibration, sparse PCA and blind source separation problems when the signal
$X^0$ and $F^0$ are generated according to eqs.~(\ref{eq:DL_PF}-\ref{eq:DL_PX}) with
$\overline X = 0$ and $\sigma = 1$, $\rho$ is the fraction of nonzero
elements in $X^0$, and $\eta$ is the matrix uncertainty from
(\ref{eq:eta_BMC}). The parameter $\alpha=M/N$ is the ratio between
number of lines and the number of columns of $F^0$, $\pi = P/N$ is the ration
between number of columns and the number of lines of $X^0$. The output channel has additive
white Gaussian noise of variance $\Delta$, and this information about
distributions and their parameters is used in the posterior likelihood
in the optimal Bayes inference. 

\subsubsection{Phase diagram for blind matrix calibration and
  dictionary learning} 

\paragraph{Identifiability threshold in zero measurement noise:}
For the
     noiseless case $\Delta=0$, and all positive $\eta>0$, the linear stability analysis of
     eqs.~(\ref{eq:mhat_DL}-\ref{eq:mx}) around the informative fixed point
     $m_X=\rho$, $m_F=1$ (i.e. the MMSEs $E_F=E_X=0$) leads to an update for the perturbations
     $\delta_F^{t+1}=(\delta_F^t \rho +\delta_X)/(\alpha \pi)$ and
     $\delta_X^{t+1}= \rho(\delta_F^t \rho +\delta_X)/\alpha$. By
     computations of the largest eigenvalue of the corresponding $2
     \times 2$ matrix we obtain that this informative fixed point is stable if and only if
     \be \pi>\pi^*(\alpha,\rho) \equiv \alpha/(\alpha-\rho) \, . \label{eq:pistar}\ee In other words the zero MSE fixed
     point is locally stable above the counting
     lower bound (\ref{eq:bound_DL}). 

Further we notice that in the low noise limit $\Delta \to 0$, for
all positive and finite $\eta$, and for
$m_X=\rho - \delta_X$, $m_F=1-\delta_F$ with $\delta_X$ and $\delta_F$
being small positive constants of the same order as $\Delta$ the free
entropy (\ref{eq:Bethe_DL}) becomes in the leading order $(\pi \alpha - \pi \rho
-\alpha)\log(\Delta+\delta_X +\rho \delta_F)/2$. For $\pi >
\pi^*(\alpha,\rho)$ this is a large positive value, and a large
negative value for $\pi < \pi^*(\alpha,\rho)$. 

Hence for the noiseless measurements $\Delta=0$ the asymptotic Bayes-optimal MMSE is $E_X=0$ and $E_F=0$
for $\pi > \pi^*(\alpha,\rho)$ for all $\eta>0$. This is a
remarkable result as it implies that in the Bayes-optimal setting the
dictionary is identifiable (or
an exact calibration of the matrix possible) as
soon as the number of samples $P$ per signal element is larger than
the value $\pi^*(\alpha,\rho)$ given by the trivial counting bound
(\ref{eq:bound_DL}). 

\paragraph{Identifiability versus achievability gap:}
The next question is whether this MMSE is achievable in a
computationally tractable
way. To answer this we study the state evolution starting in the
uninformative initialization. First we analyze the behavior of the
state evolution when $m_X=\delta_X$, and $m_F=\delta_F$ where both
$\delta_X$, and $\delta_F$ are positive and small, while we also consider
$\eta$ being very large. The linear expansion of state evolution
update then leads to $\delta_X^{t+1} = \rho^2
\alpha \delta_F / (\Delta + \rho)$ and $\delta_F = 1/\eta + \pi
\delta_X/(\Delta + \rho)$. Hence for $\eta\to \infty$ the uninformative initialization is in fact a
stable fixed point of the state evolution equations  as long as
\be 
\pi \le \pi_F \equiv  (\Delta + \rho)^2/(\alpha \rho^2)\, . 
\ee 
This means that
for $\pi^*(\rho,\alpha) < \pi < \pi_F(\Delta,\rho,\alpha)$ the MMSE is not
achievable in the dictionary learning (e.g. when $\alpha<1$ 
$\pi^*(0,\alpha) < \pi_F (0,0,\alpha)$) with the approximate message passing presented in this
paper.  This simple analysis leads us to the conclusion that a first
order phase transition is in play in the dictionary learning problem,
and as we will see also in the blind calibration ($\eta < \infty$) and
sparse PCA ($\alpha>1$).

As a side remark let us remind that the limit $\eta\to 0$ should lead to results known
from Bayesian compressed sensing. In particular in compressed sensing for low noise the
matrix $X$ is identifiable if and only if $\alpha>\rho$. To reconcile
this with the previous results notice that indeed for $\eta = c
\Delta \to 0$ with $c=O(1)$ the leading term of the free entropy
becomes $\pi(\alpha - \rho) \log(\Delta)$. Hence compressed sensing
result is recovered.  Whereas for $1 \gg \eta \gg
\Delta$ it is the dictionary learning static phase transition $\pi^* = 
\alpha / (\alpha-\rho)$ that is the relevant one. 


\paragraph{Phase diagrams for blind calibration and dictionary learning}

\begin{figure}[!ht]
\centering
\includegraphics[width=3.2in]{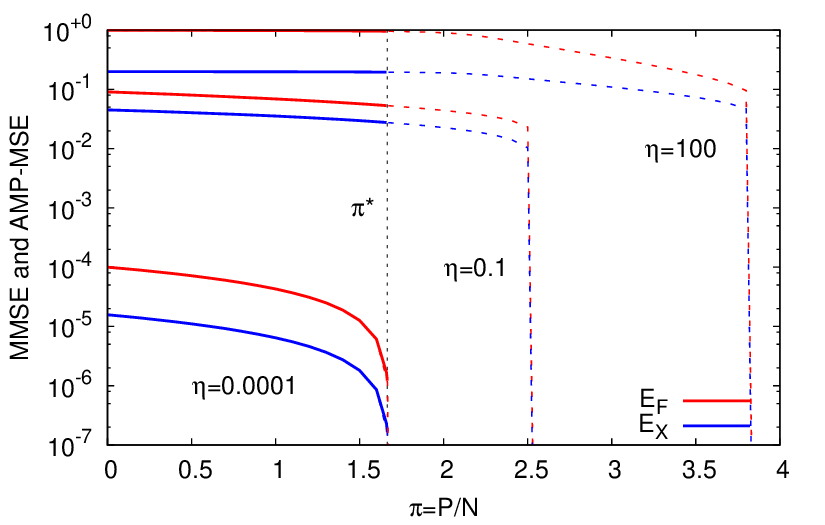}
\includegraphics[width=3.2in]{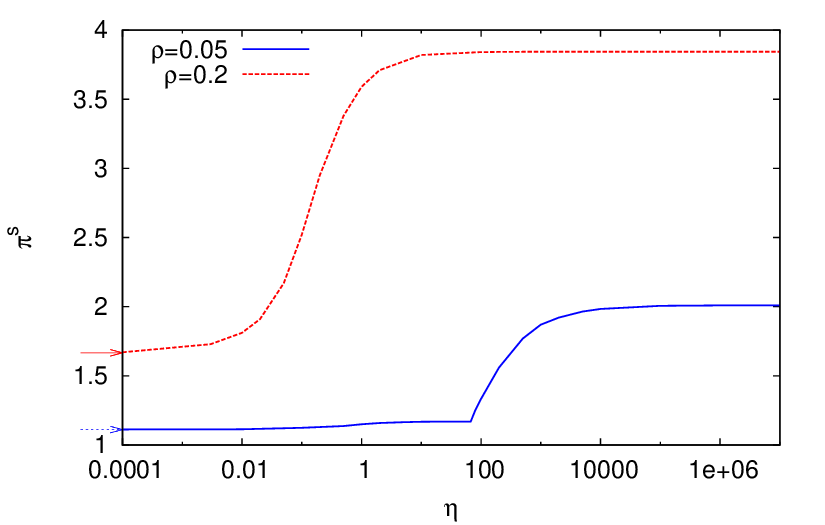}
\caption{Left: Blind matrix calibration: The predicted MSE $E_F$ (for
  the matrix estimation) and $E_X$ (for the signal estimation)
  corresponding to $\rho=0.2$, $\alpha=0.5$, $\Delta=0$, for three
  values of $\eta$. MMSE is in full lines, AMP-MSE is in dashed
  lines. The MMSE jumps abruptly from a finite value to zero at the
  phase transition point $\pi^*$ (\ref{eq:pistar}).  However, the
  AMP-MSE matches the MMSE only when the sample complexity is larger
  than the spinodal transition, that takes place at a larger value
  $\pi^s(\eta)>\pi^*$. The AMP-MSE is zero for $\pi>\pi^s(\eta)$.  Right: We
  plot the value of the spinodal transition at which the AMP-MSE has a
  discontinuity as function of $\eta$ for $\alpha=0.5$, $\Delta=0$ and
  two different values of the sparsity $\rho$. Arrows on the left
  mark the static transition $\pi^*$ and we see that $\lim_{\eta \to
    0}\pi^s(\eta) = \pi^*$. In the limit of dictionary learning,
  $\eta\to \infty$, the spinodal transition converges to a finite
  value. Interestingly, for small values of the density,
  e.g. $\rho=0.05$, we see a sharp phase transition in the threshold
  $\pi^s(\eta)$, in this case at $\eta \approx68$.  }
\label{fig_MMSE_DL}
\end{figure}

\begin{figure}[!ht]
\centering
\includegraphics[width=3.4in]{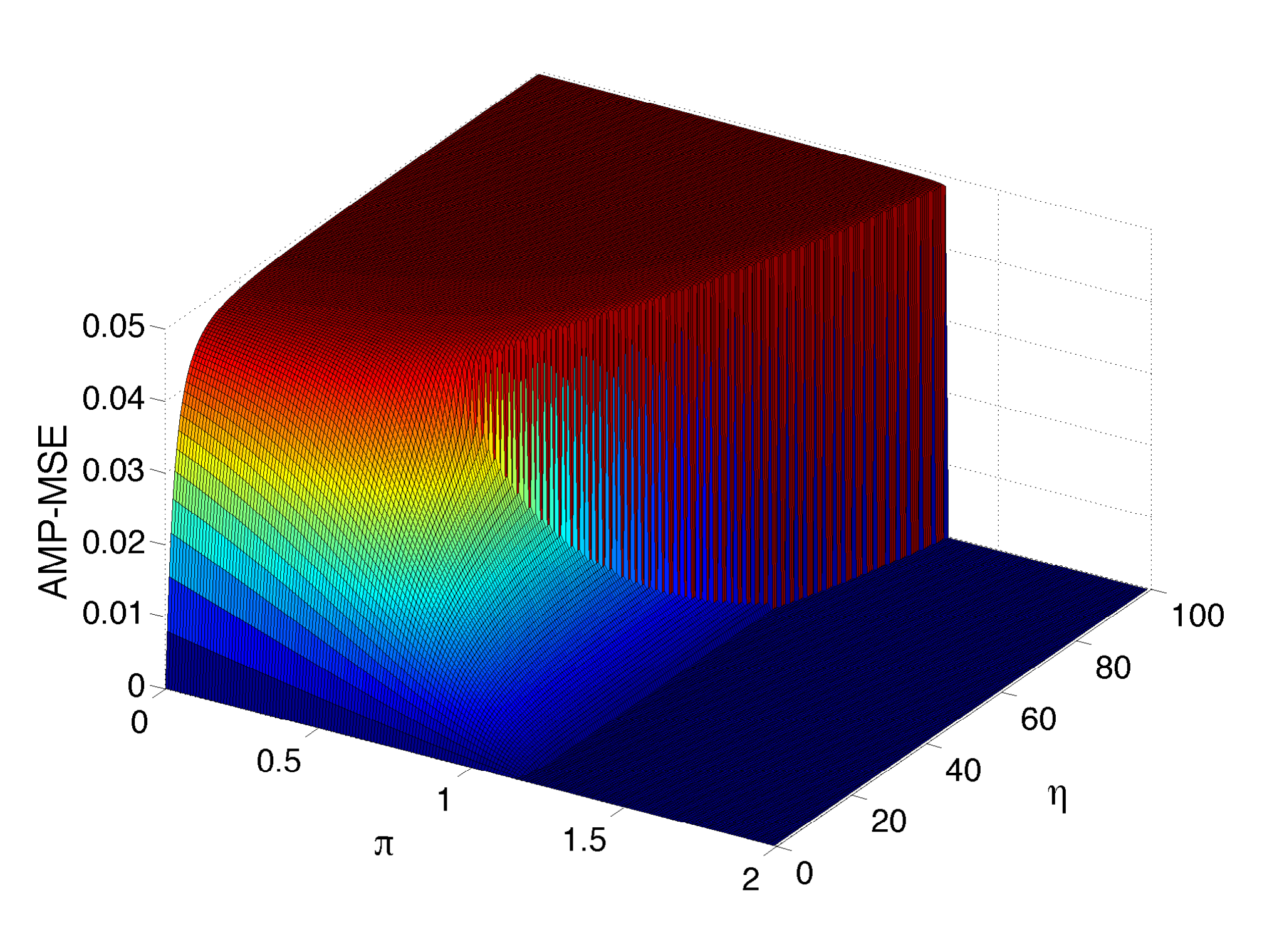}
\includegraphics[width=3.2in]{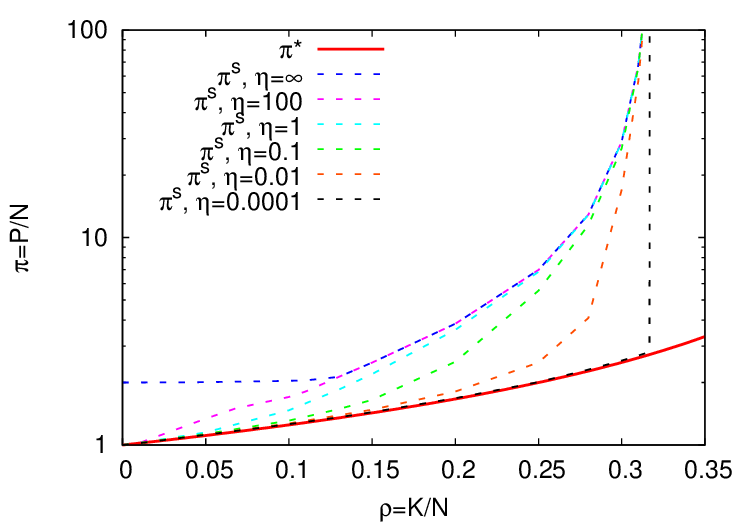}
\caption{Left: The AMP-MSE of the signal matrix $X$ is plotted against $\pi$ and $\eta$ for
  the blind matrix calibration at
  $\rho=0.05$, $\alpha=0.5$ and $\Delta=0$.  The transition as a
  function of the number of samples $\pi$ for fixed matrix
  uncertainty $\eta$ is always discontinuous in this figure, for
  larger values of $\eta$ this discontinuity is, however, much more
  pronounced. 
  Right: The phase diagram of dictionary learning and blind matrix calibration for
  $\alpha\!=\!0.5$ and $\Delta\!=\!0$. In both cases, the matrix $F$
  is identifiable by the Bayes-optimal inference above the full red line
  $\pi^*=\frac{\alpha}{\alpha-\rho}$. However, the system undergoes
  the spinodal transition
  $\pi^s(\eta)$ shown here with a dashed line for dictionary learning
  ($\eta\!=\!\infty$) and for blind calibration ($\eta$ finite). Below
  the spinodal transition, the MMSE is not achieved with AMP. 
  Notice that for $\eta\to 0$ the
  spinodal line converges to $\pi^*$ for $\rho < \rho^{\rm CS}_{\rm
    BP}$. For all values of $\eta$ the spinodal line diverges as $\rho \to \rho^{\rm CS}_{\rm
    BP}$. The threshold value $\rho^{\rm CS}_{\rm
    BP}=0.317$ for $\alpha=0.5$ is the (spinodal) phase transition of pure compressed
  sensing (when the matrix $F$ is fully known), from
  \cite{KrzakalaMezard12}.
Notice also that in the limit of learning a dictionary with a very
small sparsity, $\eta\to \infty$ and $\rho\to 0$, the  spinodal line
converges to $\pi^s=2$ whereas the information theoretic transition is
at $\pi=1$. Such a gap in the low $\rho$ behavior is striking. }
\label{fig_MMSE_cal}
\end{figure}

\begin{figure}[!ht]
\centering
\includegraphics[width=3.2in]{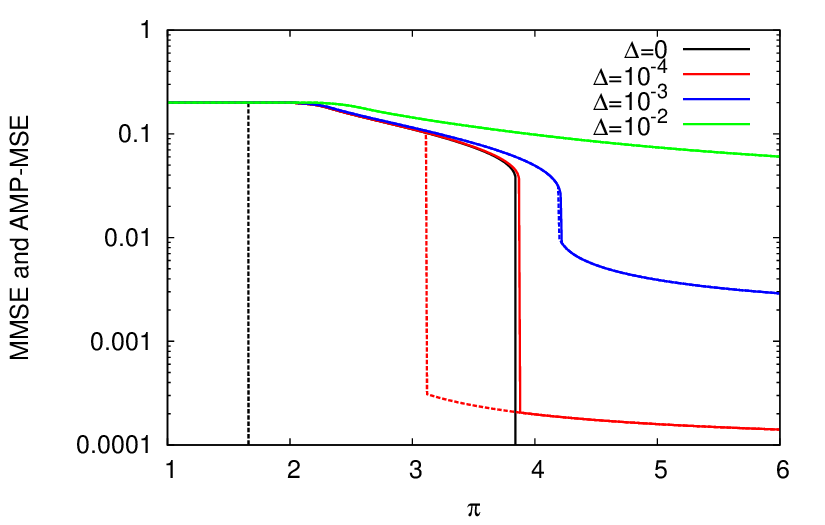}
\includegraphics[width=3.2in]{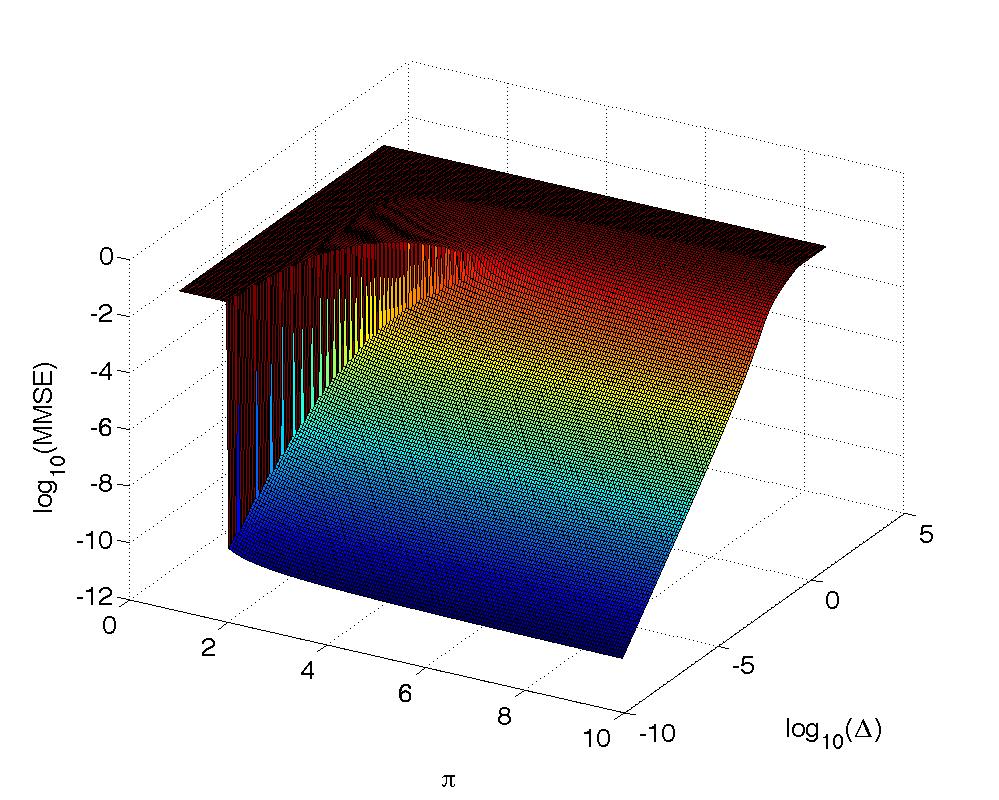}
\caption{ Dictionary learning with noisy measurements. Left: The MMSE (dashed lines) and the AMP-MSE (full lines) for the signal matrix $X$ as a function of the number of samples $P=\pi N$
  for $\alpha=0.5$, $\rho=0.2$ for various values of the measurement
  noise $\Delta$.  The behavior is qualitatively the same as for the
  noiseless case, the only difference is that the MMSE at
  $\pi>\pi^*(\Delta)$, and the AMP-MSE at $\pi>\pi^s(\Delta)$ are no
  longer strictly zero but rather $O(\Delta)$. If the additive noise
  $\Delta$ is large enough, however, the phase transition disappears
  and the MMSE$=$AMP-MSE is continuous (this is the case in this plot
  for $\Delta=10^{-2}$). Right: A surface plot of the MMSE of the signal $E_X$ in the
  $\Delta$, $\pi$-plane in the case
  $\alpha=0.5$ and $\rho=0.2$. Notice how the sharp transition disappears at large
  noise where it is replaced by a smooth evolution of the MMSE.}
\label{fig_MMSE_DL_noisy}
\end{figure}

\begin{figure}[!ht]
\centering
\includegraphics[width=3.2in]{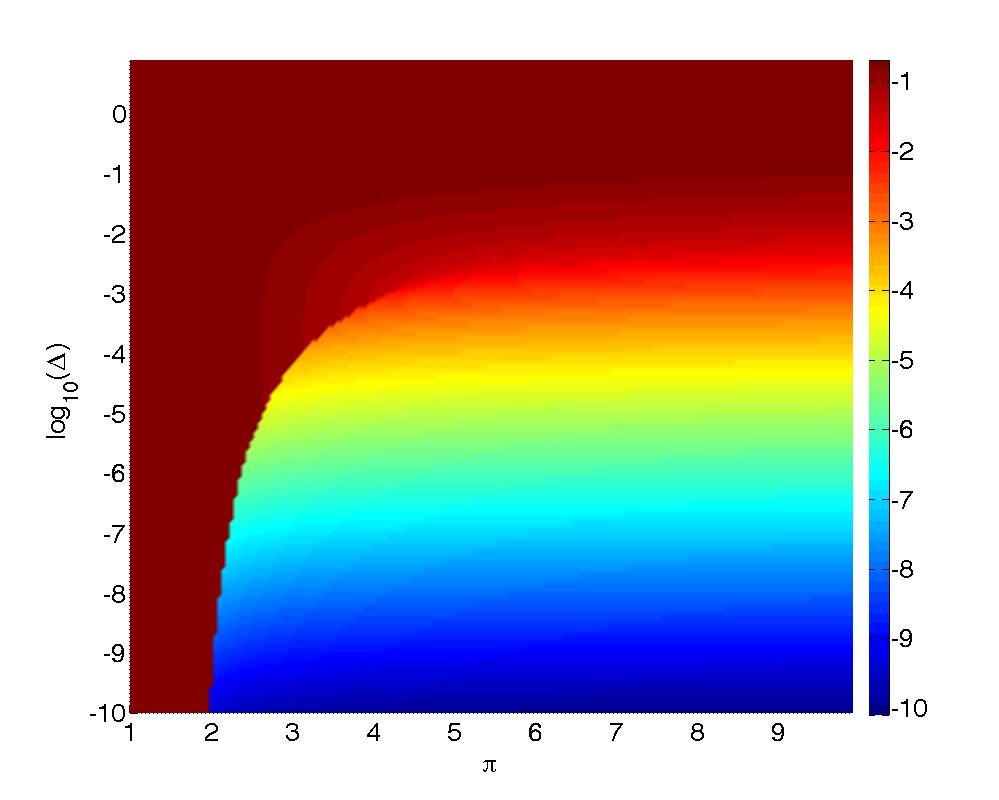}
\includegraphics[width=3.2in]{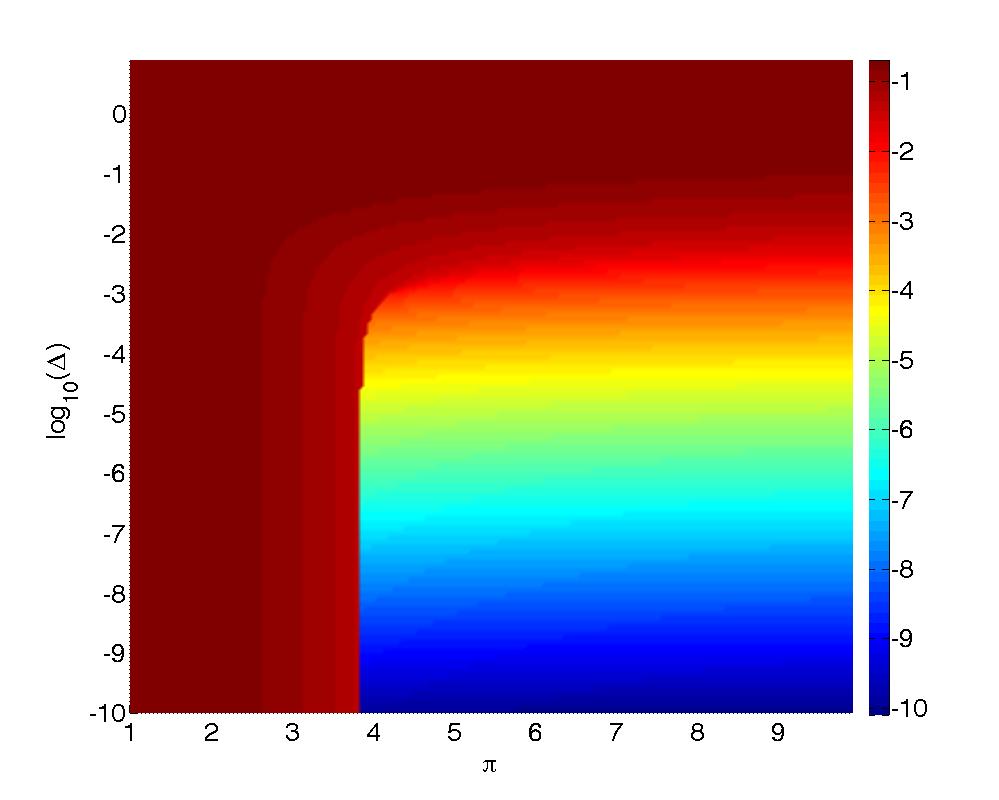}
\caption{Comparison of the MMSE (left) to the AMP-MSE (right) for noisy dictionary
  learning with  $\alpha=0.5$ and $\rho=0.2$, the MSE of the signal
  matrix $X$ is plotted on the $\Delta$, $\pi$-plane. The color-scale is in
  decadic logarithm of the MSEs. We clearly see
  the region where MMSE$<$AMP-MSE and the one where the two are equal.}
\label{fig_2d}
\end{figure}

Due to the close link between the two
problems, we shall described the results for dictionary learning together with the case of
blind matrix calibration. In both these cases we are typically trying to
learn (calibrate) an overcomplete dictionary $\alpha<1$ and a sparse
signal $X$ from as few samples $P$ as possible. We hence first plot in
Fig.~\ref{fig_MMSE_DL} (left)
the MSE for $F$ (in red) and for $X$ (in blue) as a function of $\pi
= P/N$ and fixed (representative) value of undersampling ratio
$\alpha=0.5$, and density $\rho=0.2$ in zero (or negligible)
measurement noise, $\Delta=0$. We consider several values of matrix uncertainty
$\eta$ (the larger the value the less we know about the matrix). The
AMP-MSE achieved from the uninformative initialization is depicted in
dashed lines, the MMSE achieved in this zero noise case from the
planted (informative) initialization is in full
lines.

Fig.~\ref{fig_MMSE_DL} (right) shows the value of the spinodal
transition $\pi^s$ as a function of the matrix uncertainty $\eta$. We
see that, as expected, $\lim_{\eta\to 0} \pi^s(\eta) = \pi^*$. A
result that is less intuitive is that the large $\eta$ limit is also
well defined and finite $\lim_{\eta \to \infty} \pi^s(\eta)=
\pi^s_{\rm DL}<0$. This
means that even in the dictionary learning where no prior information
about the matrix elements is available the dictionary is identifiable
with AMP for large system sizes above the spinodal transition $\pi^s_{\rm
  DL}$.

In Fig.~\ref{fig_MMSE_DL} (right) we also see an interesting behavior in the function
$\pi^s(\eta)$ for low values of $\rho$ - there is a sharp phase
transition from low $\eta$ regime, where the AMP-MSE at the transition
has a weak discontinuity towards a relatively low value of MSE, and a
high $\eta$ regime where the discontinuity is very abrupt towards a
value of MSE that is close to the completely uninformative value. This
behavior is also illustrated in Fig.~\ref{fig_MMSE_cal} (left) where we plot the
AMP-MSE as a function of $\pi$ and $\eta$ (for fixed $\rho=0.05$,
$\alpha=0.5$, $\Delta=0$).

Fig.~\ref{fig_MMSE_cal} (right) depicts the phase diagram of
dictionary learning ($\eta\to \infty$) and blind matrix calibration
(finite $\eta$) we plot the static threshold $\pi^*$
($\eta$-independent) above which the matrix $F$ is identifiable in
full red, and the spinodal threshold $\pi^s$ (for various values of
$\eta$) above which the AMP identifies asymptotically the original
matrix $F$ is dashed line. Notice that $\pi^s(\rho)$ diverges as $\rho
\to \rho^{\rm CS}_{\rm BP}$, where $\rho^{\rm CS}_{\rm BP}$ is the AMP
phase transition in compressed sensing, see
e.g. \cite{KrzakalaMezard12}. This is expected as for $\rho >
\rho^{\rm CS}_{\rm BP}$ the sparse signal cannot be recovered with AMP even if
the matrix $F$ is fully known.

From now on (also in following subsections) we will discuss only the
case $\eta\to \infty$ when no prior information about the dictionary
$F$ is available.  In Fig.~\ref{fig_MMSE_DL_noisy} (left) we illustrate the
results for dictionary learning with non-zero measurement noise. The
situation is qualitatively similar to what happens in noisy compressed
sensing \cite{KrzakalaMezard12}. The first order phase transition is
becoming weaker as the noise grows, until some value $\Delta^*$ above
which there is no phase transition anymore and the AMP-MSE$=$MMSE for
the whole range of $\pi$.  In Fig.~\ref{fig_MMSE_DL_noisy} (right) we then plot the
AMP-MSE and the MMSE of the signal $X$ as a function of the noise
variance $\Delta$ and $\pi$ for $\alpha=0.5$ and $\rho=0.2$. This
surface plot demonstrates how the sharp transition disappears and is
replaced by a continuous evolution of the MSE when the noise is
large. This MMSE is compared to the AMP-MSE for the same case in
Fig.~\ref{fig_2d}. 


\paragraph{Phase diagram for sparse PCA}

\begin{figure*}[!ht]
 \begin{center}
 \includegraphics[width=0.45\textwidth]{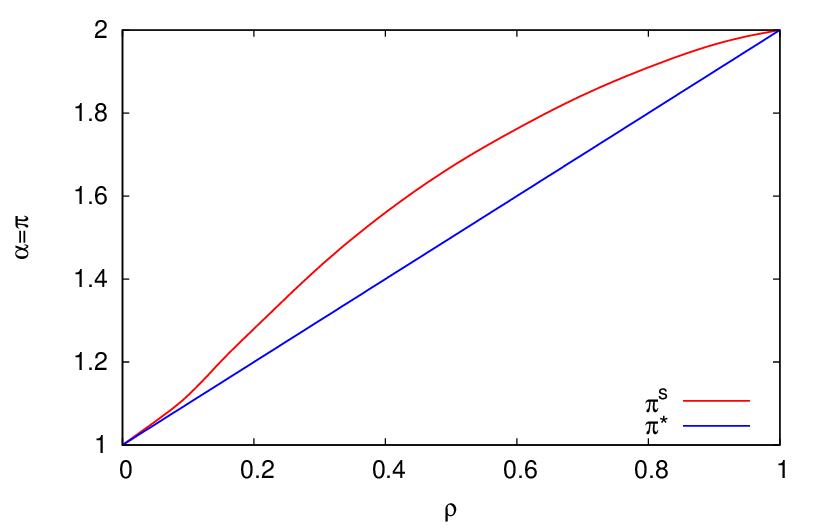}
\includegraphics[width=0.45\textwidth]{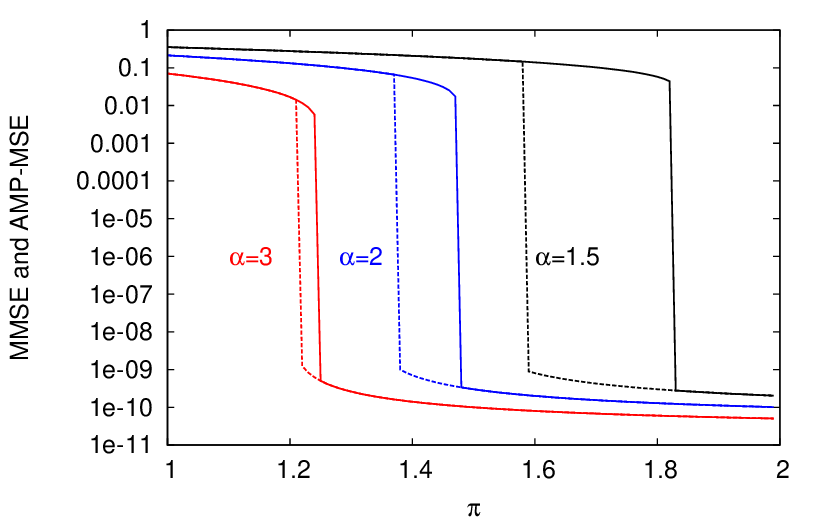}
 \caption{Phase diagram for sparse PCA. Left: Spinodal transition $\pi^s$ (upper red, where AMP-MSE
   goes to zero) and the
   optimal Bayes inference transition (lower
   blue, $\pi^*=\alpha^*=1+\rho$, where MMSE goes to zero) lines in zero
   measurement noise and when $\alpha=\rho$. Right: AMP-MSE and MMSE
   of the sparse matrix $X$ in sparse PCA  for $\rho=0.5$,
   $\Delta=10^{-10}$ and several values of ratio $\alpha$ as a
   function of $\pi$.}
\label{fig_SparsePCA_equal}
\end{center}
\end{figure*}

Sparse PCA as we set it in Section \ref{sec:examples} is closely
related to dictionary learning. Except that from the three sizes of
matrices $M$, $N$ and $P$ the smallest one is the $N$  --
corresponding to the matrix $Y$ to be of relatively low rank. Hence
for sparse PCA we should only really consider $\alpha>1$, $\pi>1$. 

Behavior of the state evolution is for this range of parameters
qualitatively very similar to the one we just observed in the previous
section. In Fig.~\ref{fig_SparsePCA_equal} left we treat the case of $M=P$,
i.e. $\pi = \alpha$, with zero measurement noise, and compute the smallest value for which the
matrices $F$ and $\rho$-sparse $X$ are recoverable for a given
$\rho$. Just as in the dictionary learning we obtain that the Bayes
optimal MMSE is zero everywhere above the counting bound $\pi = \alpha
> 1+\rho$,
blue line in Fig.~\ref{fig_SparsePCA_equal} left.  The AMP-MSE is, however,
zero only above the spinodal line, depicted in red in the figure. The
gap between the two lines is not very large in this case. 

The right part of Fig.~\ref{fig_SparsePCA_equal} shows the AMP-MSE and
the MMSE of the signal matrix $X$ for
measurement noise variance $\Delta=10^{-10}$ and density $\rho=0.5$.


\paragraph{Phase diagram for blind source separation}

\begin{figure*}[!ht]
 \begin{center}
 \includegraphics[width=0.45\textwidth]{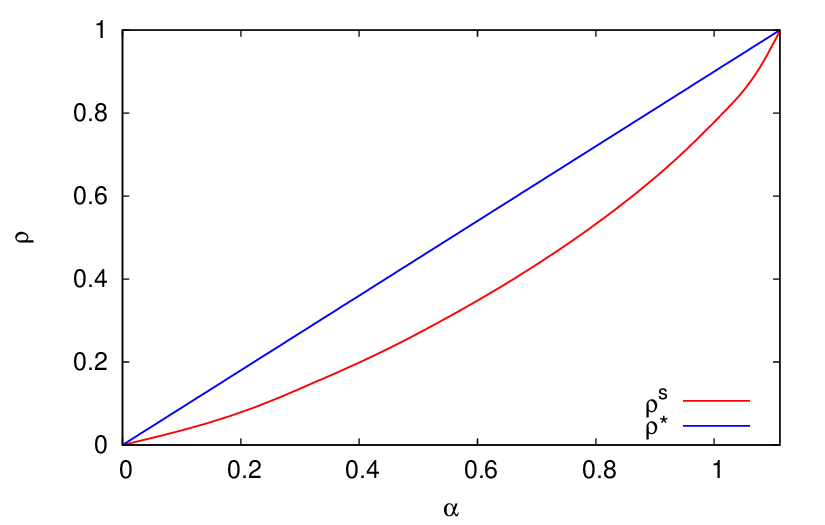}
 \caption{Blind source separation. At the measurement noise $\Delta=0$, $\pi = 10$ we plot the spinodal $\rho^s$ as a
   function of $\alpha $. Above this line AMP-MSE$>0$, whereas below
   this line
   AMP-MSE$=0$.  The static  transition line is $\rho^* = 0.9 \alpha$.}
\label{fig_BlindSource_equal}
\end{center}
\end{figure*}

In blind source separation, $P$ corresponds to the length of the
signal, $N$ is the number of sources, and $M$ the number of
sensors. Typically the signal is very long, i.e. one has $P \gg N$ and
$P \gg M$. The particularly interesting case is when there is more
sources than sensors $\alpha<1$ in that case the signal can be
reconstructed only if the density of the signal $\rho$ is smaller than
a certain value. The counting bound gives us $\rho < \alpha (\pi
-1)/\pi$ and this also corresponds to the value under which the MMSE
drops to zero under zero measurement noise. As in the previous case
also here we observe a first order phase transition and with AMP we
can reach zero error (in the noiseless case) only below $\rho^s$ that
we depict for $\pi=10$ as a function of $\alpha$ in
Fig.~\ref{fig_BlindSource_equal}.

\subsection{Low rank matrix completion}

In the remaining examples we treat cases in which neither $F$ nor $X$ are sparse,
$\rho=1$, we start with low rank matrix completion. 

In matrix completion the output function $g_{\rm out}$ is
eq. (\ref{eq:out_GN}) for the known matrix elements $\mu l$ (there is
$\epsilon M P$ of them), and
$g_{\rm out}(\omega,y,V)=0$ for the unknown elements $\mu l$ (there is
$(1-\epsilon) MP$ of them). The
input function $f_X$ (\ref{eq:f_ar}) for non-sparse matrix $X$, $\rho=1$, becomes  
\be
   f_X(\Sigma,T) = \frac{\overline X \Sigma +  T \sigma}{\Sigma +
     \sigma} \, .\label{eq:in_MC}
\ee

In the state evolution, using the Nishimori identities, we then have 
\be
        \hat m^t  =  \frac{\epsilon}{ \Delta  + \overline X^2 +
          \sigma -m_F^t m_X^t} \, ,
\ee
where $\epsilon$ is the fraction of known elements of $Y$. Moreover
for the input function  (\ref{eq:in_MC}) the state evolution equation
for $m_X$ becomes
\be
     m_X^{t+1} = \frac{ \overline X^2 + (\overline X^2 + \sigma)
       \sigma \alpha m_F^t \hat m^t   }{  1+ \sigma \alpha m_F^t \hat
       m^t  } \, . \label{eq:mx_1}
\ee
The equation for $m_F$ is the one of (\ref{eq:mF}).


It is instrumental to analyze the local stability of the informative
and the uninformative initialization for low rank matrix completion. For the informative initialization we consider $m_X^t = 1-
\delta_X^t$, and $m_F^t = 1- \delta_F^t$, where $\delta_X^t$, and
$\delta_F^t$ are small positive numbers. The state evolution update
equations at zero noise $\Delta=0$ lead to $\delta_X^{t+1} =
(\delta_X^t + \delta_F^t)/(\epsilon \alpha)$, and $\delta_F^{t+1} =
(\delta_X^t + \delta_F^t)/(\epsilon \pi)$. The largest eigenvalue of
this $2 \times 2$ system is $(\alpha + \pi)/(\epsilon \alpha \pi)$ and
hence the informative fixed point is stable for $\epsilon > \epsilon^*
=(\alpha +
\pi)/(\alpha \pi)$, which coincides with the counting bound
eq. (\ref{eq:bound_MC}). This means that  for $\epsilon > \epsilon^*$
the noiseless matrix completion, the matrices $F$ and $X$ can be
recovered without error (asymptotically). 

For the uninformative initialization we consider $m_X^t = \delta_X^t$, and $m_F^t =\delta_F^t$, where $\delta_X^t$, and
$\delta_F^t$ are again small positive numbers. This time the state
evolution equations give $\delta_F^{t+1} = \pi \epsilon \delta_X^t /
(1+\Delta)$ and $\delta_X^{t+1} = \alpha \epsilon \delta_F^t /
(1+\Delta)$. Hence the uninformative fixed point is stable for
$\epsilon < (\Delta + 1) / \sqrt{\pi \alpha}$. This can indeed be
verified in Fig.~\ref{LRMC}. 

\begin{figure}[!ht]
\centering
\includegraphics[width=3.2in]{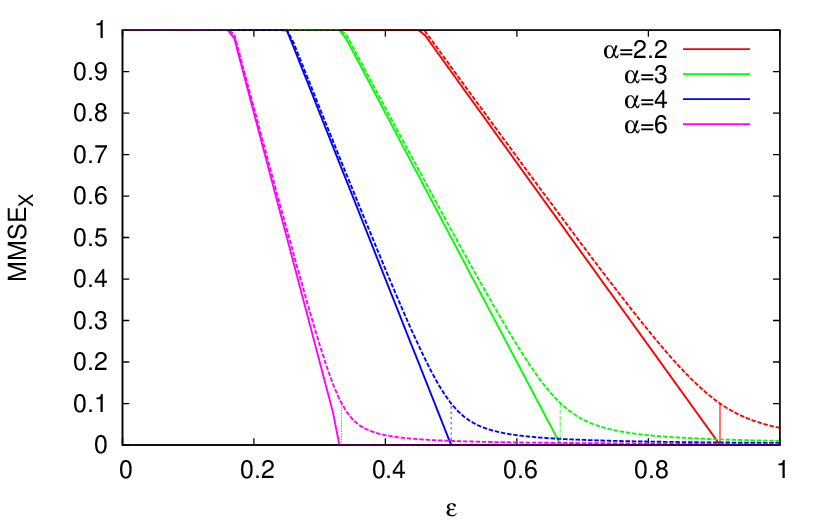}
\caption{Low rank matrix completion. The MMSE of the signal matrix $X$
  is plotted for $\alpha=\pi$ (for four
  different values of this parameter) in the noiseless
(full line) and noisy (dashed lines), with $\Delta=10^{-2}$,
cases. In zero noise there is a second order phase
transition which coincides with the counting bound, eq.~(\ref{eq:bound_MC}), marked by short
vertical lines. With non-zero noise there is no phase transition.}
\label{LRMC}
\end{figure}


In matrix completion we treat matrices $Y$ of low rank, hence $N$ is much smaller than
both $P$ and $M$. The main questions concerns the fraction $\epsilon$
of elements that need to be known in order for the recovery of $X$ and
$F$ to be possible. In this case we did not identify first order phase
transition, as a result we have MMSE$=$AMP-MSE. 
At zero measurement noise we observed a phase transition from a phase
of perfect recovery to a phase with positive MMSE, its
position coincides with the counting bound (\ref{eq:bound_MC}),
$\epsilon^* = (\alpha+\pi)/{\alpha \pi}$. With non-zero
measurement noise, in the scaling of noise and rank we consider in
this paper, the behavior of MMSE as a function of the other
parameters is smooth and derivable (no phase transition). In
Fig.~\ref{LRMC} we plot an example of the
MMSE as a function of the fraction of known elements $\epsilon$ for
squared matrix $Y$, i.e $\alpha=\pi$. We generated the signal elements
with
zero mean and unit variance, $\overline X=0$, $\sigma=1$.

Our analysis suggest that compared to the cases with non-zero sparsity
the low-rank matrix completion is a much easier problem, at least in
the random setting considered in the present paper. The fact that the
``counting'' threshold can be saturated or close to saturated in the
noiseless case by
several algorithms can be seen e.g. in the data presented in
\cite{parker2014bilinear}.

\subsection{Robust PCA}

The input functions are the eq. (\ref{eq:f_ar}) for matrix $F$ and
eq.~(\ref{eq:in_MC}) for matrix $X$. In robust PCA as defined by (\ref{eq:RPCA}) we get for the output function
\be
    g_{\rm out}(\omega,y,V) =   \frac{y-\omega}{V} \left[ 1 - \epsilon
    \frac{\Delta_s}{\Delta_s + V} -(1-\epsilon)
    \frac{\Delta_l}{\Delta_l + V} \right]\, .
\ee
The state evolution in the Bayes-optimal setting, i.e. using the Nishimori identities, becomes
\be
         \hat m^t =  \frac{1}{ \overline X^2 + \sigma -m_F^t m_X^t } \left[ 1 - \epsilon
    \frac{\Delta_s}{\Delta_s + \overline X^2 + \sigma -m_F^t m_X^t}
    -(1-\epsilon) \frac{\Delta_l}{\Delta_l + \overline X^2 + \sigma
      -m_F^t m_X^t} \right]\, .
\ee
Equation for $m_F^t$ is (\ref{eq:mF}), and for $m_X^t$ is (\ref{eq:mx_1}).


In robust PCA the informative initialization is again $m_X^t = 1-
\delta_X^t$, and $m_F^t = 1- \delta_F^t$, where $\delta_X^t$, and
$\delta_F^t$ are small positive numbers. For small noise $\Delta_s$
and $\Delta_l = O(1)$  the corresponding fixed point
is stable under the same conditions as for low rank matrix
completion, i.e. for $\epsilon > \epsilon^* =
(\alpha+\pi)/(\alpha \pi)$. 

The uninformative fixed point is $m_X^t = \delta_X^t$, and $m_F^t =\delta_F^t$, where $\delta_X^t$, and
$\delta_F^t$ are again small positive numbers. This evolves as
$\delta_X^{t+1}= \alpha \delta_F^t \hat m$, $\delta_F^{t+1}= \pi
\delta_X^t \hat m$, with $\hat m = (1+\delta_s = \epsilon \Delta_l -
\epsilon \Delta_s) /[ (1+\Delta_s) (1+\Delta_l)]$ in this limit. Hence
for instance for $\Delta_s \to 0$, $\Delta_l =1$ and $\pi =\alpha$ we
have that the uninformative fixed point is stable for $\epsilon <
2/\alpha -1$, which again corresponds to what we observe in
Fig.~\ref{RobustPCA}.

\begin{figure}[!ht]
\centering
\includegraphics[width=3.2in]{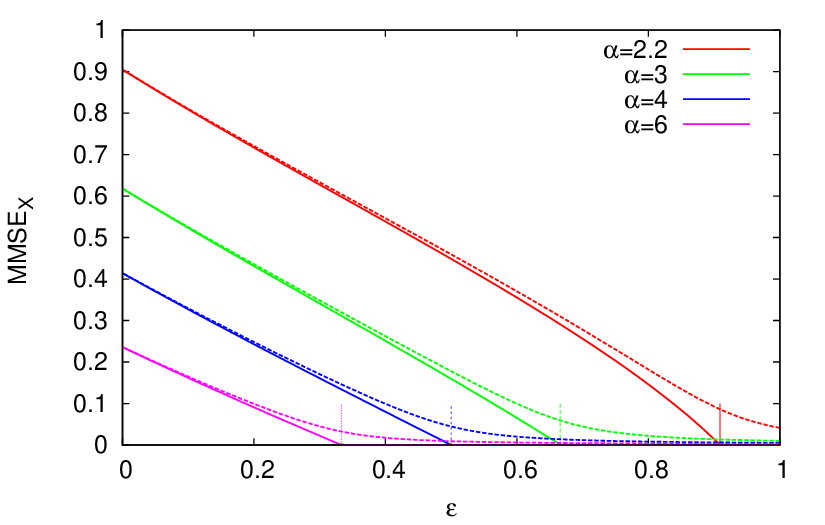}
\caption{MMSE of the matrix $X$ in robust PCA with $\alpha=\pi$ in the noiseless
(full line, $\Delta_s=0$, $\Delta_l=1$), and the noisy (dashed lines,
with $\Delta_s=10^{-2}$, $\Delta_l=1$) cases. The situation is not very
different from low rank matrix completion. Indeed in the noiseless case there
is a second order phase transition and the MMSE is zero beyond the
counting bound, marked by short vertical lines. In presence of noise there is no phase transition. 
}
\label{RobustPCA}
\end{figure}


In the example of Fig.~\ref{RobustPCA} we plot the MMSE as a function
of the fraction of undistorted elements $\epsilon$ in the case of
squared matrix $Y$, $\alpha=\pi$, the variance of the large
distortions $\Delta_l=1$ and two different values of the small
measurement noise $\Delta_s$. We see a second order phase transition
at the counting bound for $\Delta_s=0$ and a smooth decay of the MMSE
for $\Delta_X >0$.

It is interesting to compare how well robust PCA can be solved with
respect to the matrix completion. In both cases $\epsilon$ is the
fraction of known elements. The difference is that in matrix
completion their position is known, whereas in robust PCA it is
not. Intuitively the R-PCA should thus be a much harder problem. This
is not confirmed in our analysis that instead suggest that robust PCA
is as easy as matrix completion, since the zero noise phase
transitions in the two coincide. Moreover, whereas at $\epsilon \to 0$
there is no information left in matrix completion (that is why the
MMSE$=1$), in robust PCA the largely distorted elements can still be
explored and the MMSE$<1$. Note, however, that algorithmically it seems less easy to saturate this
theoretical asymptotic performance in R-PCA, see e.g. Figure 7 in \cite{parker2014bilinear}.

\subsection{Factor analysis}



In factor analysis, 
the input functions $f_F$ and $f_X$
are the same as the dictionary learning (\ref{eq:f_ar})
at $\rho=1$.
The output function (\ref{eq:def_gout}) for factor analysis (\ref{FAout})
is given by
\be
g_{\rm out}(\omega_{\mu l},y_{\mu l},V_{\mu})=\frac{y_{\mu
    l}-\omega_{\mu l}}{{\psi}_\mu+V_{\mu l}},
\ee
where $\psi_\mu$ is 
the variance of the $\mu$-th component of the unique factor. 
The 
variance of unique factor $\psi_\mu$ depends here on the index $\mu$ and does not on the
index $l$, which leads to a slight modification in the derivation of
the state evolution from section \ref{Sec:SE}. 
For simplicity, we assume that $\psi_\mu$'s are {\em known}; 
in practice, these should be estimated by the expectation and maximization scheme in conjunction with GAMP.  
Then, we obtain, on the Nishimori line 
\begin{align}
\hat{m}_\mu^t&=\frac{1}{\psi_\mu+ Q^0_{F,\mu} (\overline{X}^2+\sigma)-m^t_{F,\mu}m^t_X}. \\
m_{F,\mu}^{t+1}&=\frac{\pi Q^0_{F,\mu} 
m_X^t\hat{m}^t_\mu}{1 +\pi  Q^0_{F,\mu}   m_X^t\hat{m}_\mu^t}, \\
m_X^{t+1}&=\frac{\alpha\sigma^2\langle{m_{F,\mu}^t\hat{m}_\mu^t}\rangle_{\mu}}{1+\alpha\sigma\langle{m_{F,\mu}^t\hat{m}_\mu^t}\rangle_\mu},
\label{eq:FA_m}
\end{align}
where $\langle\cdot\rangle_{\mu}$ means the average over $\psi_\mu$, the variance of the elements of the matrix $F$ is denoted $Q_{F,\mu}^0$.  

To analytically solve (\ref{eq:FA_m}), one has to specify the distributions of $\psi_\mu$ and $Q_{F,\mu}^0$.
We set $Q_{F,\mu}^0=Q_F^0 =1$ for all $\mu$,
and suppose a two-peak distribution for $\psi$ as
\be
P_\psi(\psi)=\epsilon\delta(\psi-\psi_1)+(1-\epsilon)\delta(\psi-\psi_2).
\ee


Let us also assume $\overline X=0$ and $\sigma=1$. 
In this case the state evolution can be summarized as 
\begin{align}
m_{F1}&=\frac{\pi m_X\hat{m}_1^t}{1+\pi m_X\hat{m}_1^t},~~~
m_{F2}=\frac{\pi m_X\hat{m}_2^t}{1+\pi m_X\hat{m}_2^t}\\
m_X&=\frac{\alpha
  \{\epsilon m_{F1}\hat{m}_1^t+(1-\epsilon)m_{F2}\hat{m}_2^t\}}{1+\alpha 
 \{\epsilon m_{F1}\hat{m}_1^t+(1-\epsilon)m_{F2}\hat{m}_2^t\}},
\end{align}
where
\be
\hat{m}_1^t=\frac{1}{\psi_1+1-m_{F1}m_X},~~~\hat{m}_2^t=\frac{1}{\psi_2+1-m_{F2}m_X}.
\ee
The total MMSE is given by $E_F = 1-(\epsilon m_{F1}+(1-\epsilon)m_{F2})$ and $E_X=1-m_X$.
Fig.~\ref{fig:FA_DE_alpha2_05-2} (left) shows the $\pi$-dependence of the MMSE
at $\alpha=2$, $\psi_1=0.5$, and $\psi_2=2$ 
for $\epsilon=0$, 0.5, 0.9 and 1.

We analyze again the stability of the uninformative fixed point, $(m_X,m_{F_1},m_{F_2})=(0,0,0)$, of the state evolution. 
Small positive numbers $\delta_X$, $\delta_{F_1}$, and $\delta_{F_2}$
that give the uninformative initialization $m_X=\delta_X$, $m_{F_1}=\delta_{F_1}$, 
and $m_{F_2}=\delta_{F_2}$
evolve under the state evolution as 
\begin{align}
\delta_{F_1}^{t+1}&=\frac{\pi\delta_X^t}{\psi_1+1},~~~\delta_{F_2}^{t+1}=\frac{\pi\delta_X^t}{\psi_2+1},\\
\delta_X^{t+1}&=\alpha\pi\Big[\frac{\epsilon}{(\psi_1+1)^2}+\frac{1-\epsilon}{(\psi_2+1)^2}\Big]\delta_X^t.
\end{align}
These expressions indicate that the uninformative fixed point becomes 
unstable when $\alpha\pi \Big[\frac{\epsilon}{(\psi_1+1)^2}+\frac{1-\epsilon}{(\psi_2+1)^2}\Big]>1$.
The critical values of $\pi$ given by this condition coincides with the 
transition point where the MMSE departs from 1 shown in
Fig.~\ref{fig:FA_DE_alpha2_05-2} (left). As an example of $\epsilon$-dependence, we show 
the MMSE of $X$ at $\psi_1=10^{-2}$, $\psi_1=10^{-5}$
and $\psi_2=1$ for four different values of $\alpha=\pi$
in Fig.~\ref{fig:FA_DE_alpha2_05-2} (right). Consistently with our analysis, in these cases the
uninformative initialization is always unstable. 
%
%

\begin{figure}
\begin{center}
\includegraphics[width=2.6in]{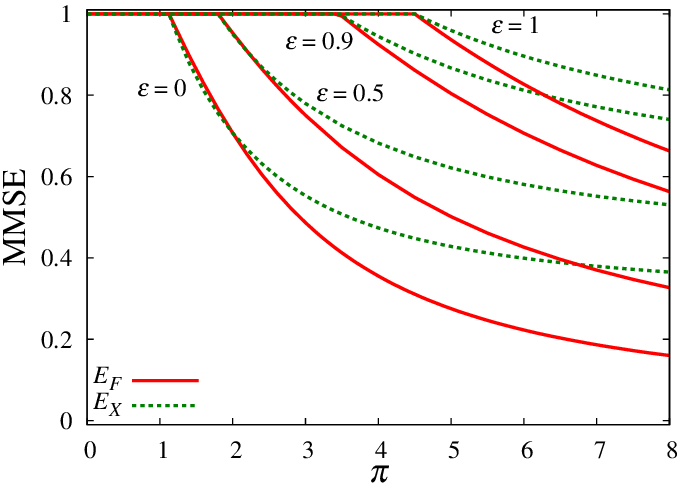}
\hspace{10mm}
\includegraphics[width=2.6in]{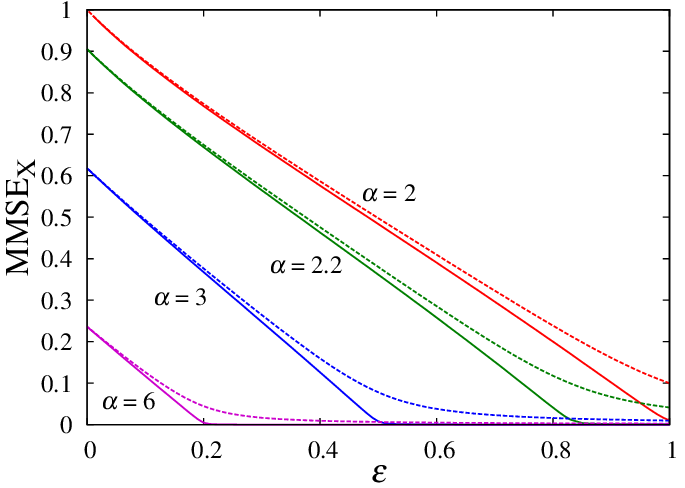}
\end{center}
\caption{Left: MMSE in factor analysis with $\psi_1=0.5$ and
  $\psi_2=2$, $\alpha=2$. Right:  MMSE of the signal matrix $X$ in factor analysis at four different values of $\alpha=\pi$
at $\psi_1=10^{-2}$ and $\psi_2=1$ (dashed lines),
and $\psi_1=10^{-5}$ and $\psi_2=1$ (solid lines).
\label{fig:FA_DE_alpha2_05-2} }
\end{figure}


The transition associated with the stability of the informative fixed point
occurs only when at least one of $\psi$s tends to be zero.
For instance when $\psi_1=0$, 
the informative initialization corresponds to $\delta_{F_1}=1-m_{F_1}$
and $\delta_X=1-m_{X}$ that are given by
\begin{align}
\delta_{F_1}^{t+1}=\frac{\delta_X^t+ \delta_{F_1}^t}{\pi},~~~\delta_X^{t+1}=\frac{\delta_{F_1}^t+\delta_X^t}{\alpha\epsilon}
\end{align}
without depending on $\psi_2$.
These expressions mean that when $\epsilon>\epsilon^*=\pi\slash [\alpha(\pi-1)]$,
the informative fixed point is stable, consistently with Fig.~\ref{fig:FA_DE_alpha2_05-2} (right).

The state evolution of factor analysis for the two-peak case is
qualitatively similar to that of robust PCA and low rank matrix
completion, but the values of the phase transition points differ.

\section{Discussion and Conclusions}
\label{sec:conclusion}

In this paper we have analyzed various examples of the matrix
factorization problem. We obtain a matrix $Y$ that is an element-wise noisy
measurement of an unknown matrix $Z=FX$, where both $Y$, and $Z$ are $M \times P$ matrices, $F$
is a $M\times N$ matrix, and $X$ is a $N \times P$ matrix. We have
considered the computational tractability of this problem in the large size limit
$N\to \infty$ while $\pi = P/N = O(1)$, and $\alpha = M/N = O(1)$. Our
analysis concerns the teacher-student scenario where $X$ and $F$ are
generated with random independent elements of some known probability
distributions and we employ the Bayes-optimal inference scheme to
recover $F$ and $X$ from $Y$.

Let us summarize our contribution: We derived the approximate message
passing algorithm for matrix factorization.  One version of the algorithm
---for calibration and dictionary learning--- was reported in
\cite{krzakala2013phase}, and a very related algorithm called Big-AMP
was discussed by \cite{parker2013bilinear,parker2014bilinear}.  This algorithm is
derived from belief propagation.  We have presented the AMP for matrix
factorization in several forms in Sections
\ref{sec:mes_pas}, and \ref{Sec:GAMP}. We
have also discussed simplifications that arise in the Bayes-optimal
setting when we can use the Nishimori
identities (Section \ref{Sec:Nish}), or when the matrix is large and
one uses self-averaging of some of the quantities appearing in GAMP
(Section \ref{Sec:full_TAP}). We focused on the theoretical properties
of the AMP algorithm, for a robust practical implementation we refer
the reader to the works \cite{parker2013bilinear,parker2014bilinear}
that include a very complete report on its performance on a range of
benchmarks. 

Next to the AMP algorithm we have also derived the corresponding Bethe
free entropy in Section \ref{Sec:Bethe}. The Bethe free entropy
evaluated at a fixed point of the GAMP equations approximates the
log-likelihood of the corresponding problem. We mainly use it in
situations when we have more than one fixed point of GAMP, it is then
the one with the largest values of the Bethe entropy that
asymptotically given the MMSE of the Bayes-optimal inference. We also
derived a variational Bethe free entropy in Section
\ref{Sec:Bethe_var}. This is a useful quantity that can serve in
controlling the convergence of the AMP approach. Alternatively, a direct maximization of this
expression is a promising algorithm
itself (see \cite{KrzakalaManoel14} for an investigation of this idea
for compressed sensing). 

The AMP algorithm for matrix factorization is amenable to asymptotic
analysis via the state evolution technique that was carried out
rigorously for approximate message passing in compressed sensing
\cite{BayatiMontanari10}. We derive the state evolution analysis for
matrix factorization using tools of statistical mechanics. In
particular we use two approaches leading to equivalent results the
cavity method (Section \ref{Sec:SE}) and the replica method (Section
\ref{Sec:repl}). Our derivation of the state evolution is not
rigorous, but we conjecture that it is nevertheless asymptotically
exact as is the case in many other systems of this type including the
compressed sensing. The main result of this state evolution are simple
iterative equations that provide a way to compute the MMSE of the
Bayes-optimal inference as well as the MSE reached theoretically in
the large size limit by the AMP algorithm. The rigorous proof of the
formulas derived in this paper is obviously an important topic for
future work. 

The main results of this paper concern analysis of MMSE and AMP-MSE
for various interesting examples of the matrix factorization
problem. We analyze the asymptotic phase diagrams for dictionary
learning, blind matrix calibration, sparse PCA, blind source
separation, matrix completion, robust PCA and factor analysis. Earlier
results on this analysis appeared in
\cite{krzakala2013phase,SakataKabashimaNew}. We find that when one of
the matrices $F$ or $X$ is sparse the problems undergo a first order
phase transition which is related to an interesting algorithmic
barrier known for instance from compressed sensing
\cite{KrzakalaMezard12}.

It is a generic observation that for most of the problems we analyzed
the theoretically achievable performance is much better than the one
achievable by existing algorithms. The AMP algorithm should be able to
match this performance for very large systems which is the most
exciting perspective for further development of this work. If
successful it could lead to an algorithmic revolution in various
application of the matrix factorization.

In this paper we concentrate on the theoretical analysis and not on
the performance of the algorithm itself. Some studies of the
performance of some versions of the algorithm can be find in
\cite{krzakala2013phase,parker2014bilinear}. We, however, observed that
the performance depends strongly on the implementation details and we
did not yet found a way to match the theoretically predicted
performance for systems of treatable (practical) size in all cases. 

It is worth discussing some of these algorithmic issues in this
conclusion.  One of the main problems it that the GAMP algorithm with
parallel update presents instabilities that drive its evolution away
from the so-called Nishimori line; see a recent study of this issue in
the compressed sensing problem \cite{CaltagironeKrzakala14}. This can
be seen even in the state evolution when we do not assume explicitly
that the result of the Bayes-optimal inference corresponds to a fixed
point that belongs to the Nishimori line. There are ways how to avoid
these issues, e.g. we observed that
the difficulties basically disappear when the sequential update of the
message passing algorithm from Section \ref{sec:mes_pas} is used
instead of the parallel one. This, however, does not scale very well with the systems size
and our results were hence spoiled by very strong finite size
effects. When learning the $M\times N$ matrix $F$, and the $N
\times P$ matrix $X$ we also often observed that a (very small) number of
the $P$ signals $X_l$ were not correctly reconstructed, and these ``rogue"
vectors in $X_l$ were polluting the reconstruction of $F$. An exemple of
these finite size effects can be observed in
\cite{krzakala2013phase}.

In the work of \cite{parker2013bilinear,parker2014bilinear} a part of the problems with
convergence of the corresponding algorithm was mitigated by adaptive
damping (though maybe with not the most suitable cost function, see
Sec.~\ref{Sec:Bethe}) and expectation maximization learning. Why this
is helpful is theoretically explained in the recent work
\cite{CaltagironeKrzakala14} for compressed sensing. However, the
implementation of \cite{parker2014bilinear} does not match the
theoretical performance predicted in this paper either (we have
explicitly tried for the dictionary learning and robust PCA
examples). This shows that more work is needed in order to reach a
practical algorithm able to achieve the prediction at moderate
sizes. A more proper understanding of these problems, and further
developments of the algorithm are therefore the main direction of our
future work.

\begin{table}[!ht]
\begin{center}
\begin{tabular}{|l||c|c|c|c|} \hline
 {\bf  variable nodes}   & definition & $X$  &definition & $F$    \\ \hline  \hline
variables for elements & (\ref{z=Fx})  & $X_{il}$ & (\ref{z=Fx})   &  $F_{\mu i}$  \\ \hline
true elements & (\ref{MSE_X_def})  & $X^0_{il}$ & (\ref{MSE_F_def})   &  $F^0_{\mu i}$  \\ \hline
prior  distribution & (\ref{PX}) &   $P_X(X_{il})$ &  (\ref{PF})  &  $P_F(F_{\mu i})$    \\ \hline
incoming BP message  & (\ref{message_tm})  & ${\tilde m}_{\mu l \to i l}(X_{il})$  &  (\ref{message_tn})    & ${\tilde n}_{\mu l \to \mu i}(F_{\mu i})$  \\ \hline
outgoing BP message  &  (\ref{message_m})    &  $m_{il\to\mu l}(X_{il})$ & (\ref{message_n})    &  $n_{\mu i \to\mu l}(F_{\mu i})$  \\ \hline
approximate-BP mean  &  (\ref{eq_def_fa}) & ${\hat{x}}_{il \to \mu l}$ & (\ref{eq_def_fr})  & ${\hat{f}}_{\mu i \to \mu l}$  \\ \hline
approximate-BP variance  & (\ref{eq_def_fc})  & $c_{il \to \mu l}$  &(\ref{eq_def_fs})   & $s_{\mu i \to \mu l}$   \\ \hline
incoming-BP mean &  (\ref{eq:B}) & $B_{\mu l \to il}$  &   (\ref{eq:R}) & $R_{\mu l \to \mu i}$  \\ \hline
incoming-BP variance &  (\ref{eq:A}) & $A_{\mu l \to il}$ &   (\ref{eq:S}) & $S_{\mu l \to \mu i}$  \\ \hline
mean-input function  &  (\ref{f_ac_gen})  & $f_X $ &  (\ref{f_rs_gen})  &$ f_F $ \\ \hline
variance-input function  & (\ref{f_ac_gen})  &$ f_c$ &  (\ref{f_rs_gen})  & $f_s $ \\ \hline
incoming mean  & (\ref{eq:def_Sigma})  & $T_{il}$ &   (\ref{eq:def_WZ}) & $W_{\mu i}$  \\ \hline
incoming variance  &  (\ref{eq:def_Sigma}) & $\Sigma_{il}$ &  (\ref{eq:def_WZ})  &
  $Z_{\mu i}$ \\ \hline
GAMP estimate of mean  &  (\ref{gamp_ac}) & ${\hat{x}}_{il}$ & (\ref{gamp_rs})  & ${\hat{f}}_{\mu i}$  \\ \hline
GAMP estimate of variance  & (\ref{gamp_ac})  & $c_{il}$  &(\ref{gamp_rs})   & $s_{\mu i}$   \\ \hline
SE magnetization  & (\ref{mx})  & $m_X$ &  (\ref{mx})  &  $m_F$ \\ \hline
SE overlap  &  (\ref{qq}) & $q_X$   &  (\ref{qq})  & $q_F$   \\ \hline
SE variance  & (\ref{QD})   & $Q_X-q_X$   & (\ref{QD})    & $Q_F-q_F$   \\ \hline
\end{tabular}  

\vspace{5mm}

\begin{tabular}{|l||c|c|} \hline
 {\bf factor nodes}    & definition & output     \\ \hline \hline
  matrix to factorize        & (\ref{z=Fx})  &  $z_{\mu l}$  \\ \hline 
output &  (\ref{PY})  &  $y_{\mu l}$   \\ \hline
output probability&   (\ref{PY})   &  $P_{\rm out}(y_{\mu l}|z_{\mu l})$
\\ \hline
output realization function &   (\ref{eq:hPout})   &  $h(z,w)$  \\ \hline
cavity variance of $z_{\mu l}$ &   (\ref{eq:def_Vmuil}) &  $V_{\mu i l}$  \\ \hline
cavity estimation of $z_{\mu l}$&  (\ref{eq:def_omuil})  &  $\omega_{\mu i l}$  \\ \hline
output function &  (\ref{eq:def_gout})  &  $g_{\rm out}(\omega,y,V) $ \\ \hline
variance of $z_{\mu l}$ &   (\ref{eq:def_V}) &  $V_{\mu  l}$  \\ \hline
estimation of $z_{\mu l}$&  (\ref{eq:def_o})  &  $\omega_{\mu  l}$  \\ \hline
SE output $q$ & (\ref{eq:def_qhat})      &  $\hat q$  \\ \hline
SE output $\chi$ & (\ref{eq:def_chihat})      &  $\hat \chi$  \\ \hline
SE output $m$ & (\ref{eq:def_mhat})      &  $\hat m$  \\ \hline
\end{tabular}  
\caption{\label{table1} Glossary of notations with the number of
  equation where the quantity was defined or first used.}
\end{center}
\end{table}

\section*{Acknowledgments} The research leading to these results has
received funding from the European Research Council under the European
Union's $7^{th}$ Framework Programme (FP/2007-2013)/ERC Grant
Agreement 307087-SPARCS.  {Financial support by the JSPS Core-to-Core
  Program ``Non-equilibrium dynamics of soft matter and information''
  and JSPS/MEXT KAKENHI Nos. 23-4665 and 25120013 is also
  acknowledged. }

\bibliographystyle{nature}
\bibliography{refs}

\end{document}